\documentclass{amsart}
\usepackage[margin=1.3in]{geometry}
\usepackage{amsmath}
\usepackage{amsfonts}
\usepackage{amsthm}
\usepackage{comment}
\usepackage{url}
\usepackage{color}
\usepackage{enumitem}
\setlist[enumerate]{font=\normalfont}
\usepackage{mathtools}
\usepackage[alphabetic]{amsrefs}
\usepackage{latexsym}
\usepackage{amssymb}
\usepackage{graphicx}        
\usepackage{float}
\usepackage{tikz-cd}
\usetikzlibrary{matrix,calc,decorations.pathreplacing}
\usepackage[all]{xy}
\usepackage{ mathrsfs }
\usepackage[new]{old-arrows}
\usepackage{nicematrix}
\tikzcdset{scale cd/.style={every label/.append style={scale=#1},
		cells={nodes={scale=#1}}}}

\usepackage[colorinlistoftodos]{todonotes}
\usepackage[colorlinks=true, allcolors=blue]{hyperref}
\usepackage{subcaption}
\usepackage{mathdots}
\usepackage{stmaryrd}

\theoremstyle{plain}
\newtheorem*{theorem*}{Theorem}
\newtheorem{theorem}{Theorem}[section]
\newtheorem{corollary}[theorem]{Corollary}
\newtheorem{lemma}[theorem]{Lemma}

\newtheorem{proposition}[theorem]{Proposition}

\numberwithin{equation}{section}

\theoremstyle{definition}
\newtheorem{definition}[theorem]{Definition}

\newtheorem*{example*}{Example}

\theoremstyle{remark}
\newtheorem{remark}[theorem]{Remark}
\newtheorem*{remark*}{Remark}  

\usepackage{xcolor}

\definecolor{RefBlue}{HTML}{2F6FA3}
\hypersetup{
	colorlinks=true,
	linkcolor=RefBlue,
	filecolor=green,   
	citecolor=purple,
	urlcolor=RefBlue,
}

\author{Juan Sebastian Numpaque-Roa}
\address{\flushleft Centro de Matemática da Universidade do Porto\\ Departamento de Matemática, Faculdade de Ciências da Universidade do Porto \\ Rua do Campo Alegre S/N, 4169-007 Porto, Portugal}
\email{\noindent js.numpaqueroa@gmail.com}

\begin{document}
	\title[Bia\l ynicki-Birula decompositions of Nakajima Quiver Varieties]{Bia\l ynicki-Birula Decompositions of Nakajima Quiver Varieties, Quiver Chains and Star-Shaped Quivers}
	\maketitle
	\begin{abstract}
This paper studies the Białynicki--Birula decomposition associated with the natural $\mathbb C^*$-action on Nakajima quiver varieties given by scaling the maps along the reversed arrows of the doubled quiver. Fixed points of this action are described in terms of representations with relations of auxiliary quivers, called quiver chains. Several aspects of the geometry of the resulting Bia\l ynicki--Birula decomposition are also investigated, including the dimensions of their attracting fibers and the corresponding motivic decomposition of the Nakajima quiver variety in terms of quiver-chain moduli spaces. Finally, the general framework is specialized to star-shaped quivers, where, for particular dimension vectors, the fixed-locus components are classified and the motivic class of the full Nakajima quiver variety is computed in a suitable localization of the Grothendieck ring of varieties.

	\end{abstract}
	\tableofcontents
	\section{Introduction}
	\noindent Nakajima quiver varieties are an important class of semiprojective hyperkähler varieties. They play a central role in geometric representation theory, the theory of moduli spaces, and mathematical physics (see, for instance, the foundational work of Nakajima \cite{Nakajima} and the survey by Ginzburg \cite{GinzburgSurvey}). More precisely, given an acyclic quiver $Q$, a dimension vector $\mathbf d$, and a stability parameter $\theta$, the Nakajima quiver variety $\mathcal M^\theta(\overline Q,\mathbf d)$ can be realized as the GIT quotient of the zero fiber of a certain complex moment map, which is a closed subvariety of the affine space $\text{Rep}(\overline{Q},\mathbf{d})$ parameterising $\mathbf{d}$-dimensional representations of the doubled quiver $\overline{Q}$, by the group $\operatorname{GL}(\mathbf d)$. For suitable choices of the stability parameter and dimension vector, the Nakajima quiver variety will be smooth. 
	\\
	\\
	The semiprojectivity condition means that there exists a $\mathbb{C}^*$-action for which the fixed-point locus is proper and such that every point in the variety has a well-defined limit as $t$ goes to zero under the $\mathbb{C}^*$-action \cite{HauselRodriguezVillegas}. For Nakajima quiver varieties this $\mathbb{C}^*$-action is as follows:  if a point of $\operatorname{Rep}(\overline Q,\mathbf d)$ is written as $(V,X,Y)$, where $X$ denotes the family of maps along the arrows of $Q$ and $Y$ denotes the family of maps along the opposite arrows, then $\mathbb C^*$ acts by
	$$
	t\cdot (V,X,Y)=(V,X,tY).
	$$
	This action preserves the complex moment-map condition so it descends to the Nakajima quiver variety. A key feature of smooth semiprojective varieties is that they admit a Bia\l ynicki-Birula decomposition or stratification which gives a description of several invariants of the variety, such as the Betti numbers and  the motivic class, in terms of that of the fixed locus of the action.  
	\\
	\\
	Another important example of a semiprojective variety is that of the moduli space of stable Higgs bundles of coprime rank and degree over a smooth projective curve. In this case, the study of the fixed point loci for the $\mathbb{C}^*$-action has proven to be successful in understanding the geometry and topology of the moduli space. Important examples include: Hitchin's and Gothen's computation of the Betti numbers of the moduli space of Higgs bundles for ranks 2 and 3 \cite{GothenBettiNumbers}\cite{HitchinSelfDuality}, Hoskins and Pepin-Lehalleur's study of the Voevodsky motive of the moduli space of Higgs bundles \cite{HoskinsVoevodskyMotive} and, more recently, Hausel and Hitchin's work on very stable Higgs bundles \cite{HauselHitchin}.  
	\\
	\\ 
	Inspired by the case of Higgs bundles, in this work we undertake a systematic study of the Bia\l ynicki-Birula decomposition for moduli spaces of Nakajima quiver representations. Our starting point is to identify a fixed point for the $\mathbb{C}^*$-action on the Nakajima quiver variety with a quiver chain (Theorem \ref{characterizationFixedPoints} and Remark \ref{quiverChainsAreQuiverReps}), just as fixed points for the $\mathbb{C}^*$-action on the Higgs bundle case are identified with holomorphic chains \cite{AlvarezCGarciaPDimensionalReduction}. Quiver chains are $\mathbf{d}^\ell$-dimensional representations of an auxiliary quiver, $Q^\ell$, satisfying some relations. The data of the quiver chain depends on the Nakajima quiver representations being fixed by the $\mathbb{C}^*$-action and a positive integer $\ell$. We then see that for a specific choice of stability parameter, $\theta^\ell$, stability of the quiver chain with respect to $\theta^\ell$ is equivalent to $\theta$-stability of the associated fixed point (Proposition \ref{semistableRepresentationIffSemistableChain}). Next, we borrow the techniques of King \cite{KingModuli} to construct the moduli space of $\theta^\ell$-stable quiver chains. Combining these results we show that the fixed-point locus $\mathcal{M}^\theta(\overline{Q},\mathbf{d})^{\mathbb{C}^*}$ corresponds to a disjoint union of the moduli spaces of quiver chains constructed (Theorem \ref{quiverChainsClosedImmersion}). 
	\\
	\\
	As we hinted before, the Bia\l ynicki-Birula theory gives a stratification of the moduli space $\mathcal{M}^{\theta}(\overline{Q},\mathbf{d})$. This stratification is by affine Zariski-locally trivial fibrations whose bases are, in this case, the moduli spaces of quiver chains. Thus the class in the Grothendieck ring of varieties of $\mathcal{M}^{\theta}(\overline{Q},\mathbf{d})$ can be written in terms of those of the moduli spaces of quiver chains corresponding to the fixed loci for the $\mathbb{C}^*$-action (Proposition \ref{motivicBialynickiBirulaDecomposition}). For the star-shaped quiver, which has connections with parabolic Higgs bundles over $\mathbb{P}^1$ \cite{FisherRayan} and moduli spaces of Hyperpolygons \cite{GodinhoMandini}, we compute the motivic class in a suitable localization of the Grothendieck ring of varieties of the full Nakajima quiver variety (Theorem \ref{motivicClassOfNakajimaQuiverVariety}) and give a classification of the fixed-point loci for particular dimension vectors (Theorem \ref{classificationType1...1QuiverChains}). The main ingredient for the former computation (Lemma \ref{MotivicFeiIdentity}) is a motivic analogue of a result of Fei \cite{FeiQuiversWithRelations} which is itself closely related to Reineke’s work on Harder-Narasimhan filtrations and point counting for quiver varieties \cite{ReinekePointCounting}. Although motivic classes of Nakajima quiver representations have already been studied by Wyss \cite{WyssMotive}, using a motivic analogue of Hausel's arithmetic Fourier transform \cite{HauselArithmeticFourierTransform}, our approach is different as it uses the Bia\l ynicki-Birula decomposition of the moduli space. This requires an explicit analysis of the motivic class of each quiver-chain moduli space associated to the fixed locus (Theorem \ref{motivicClassQuiverModuliWithRelations}), thereby providing additional information about the geometry of the Nakajima quiver variety.
	\\
	\\
	The paper is organized as follows. In Section \ref{theSetup} we recap the construction of Nakajima quiver varieties from doubled quivers, the hyperkähler moment map and reduction, the GIT interpretation of this, and the deformation theory of the moduli spaces obtained. In Section \ref{torusActionSection} we study the $\mathbb C^*$-action on Nakajima quiver varieties, characterize its fixed points and introduce the associated quiver chains and their moduli spaces. We also show the existence of a universal family for Nakajima quiver varieties and show the existence of liftings of the $\mathbb{C}^*$-action to it. Next, in Section \ref{BialynickiBirulaSection}, we review the generalities of the Bia\l ynicki--Birula decomposition in this quiver setting and study its geometry using tools from deformation theory. Finally, in Section \ref{theCaseOfTheStarShapedQuiver} we specialize all of our discussion to star-shaped quivers. We include an Appendix (Section \ref{theAppendix}) where we discuss moduli spaces of quiver representations with relations, flag varieties and generalities on the Grothendieck ring of varieties. We also prove useful lemmas on vector bundles and Zariski-locally trivial fibrations and explain why Fei's formula holds in the Grothendieck ring of varieties using the formalism of motivic Hall algebras. 
	\newline
	\newline
	\textbf{Funding.} This work was supported by Fundação para a Ciência e a Tecnologia, I.P. [grant \hyperref{https://doi.org/10.54499/UI/BD/154369/2023}{}{}{UI/BD/154369/2023}]; and partially supported by the project ``Higgs bundles: geometry, algebra and physics'' [grant \hyperref{https://doi.org/10.54499/2024.15931.PEX}{}{}{2024.15931.PEX}] and Centro de Matemática da Universidade do Porto [grant \hyperref{https://doi.org/10.54499/UID/00144/2025}{}{}{UID/00144/2025}].
	\newline
	\newline
	\textbf{Acknowledgements.} I am deeply grateful to my advisor, Peter B. Gothen, whose guidance, insight, and support were essential to the development of this work. Many stimulating conversations with Carlos Florentino helped shape my thinking and encouraged the project along the way. I am also indebted to David Alfaya for listening generously to my ideas and for suggesting the use of towers of fibrations to compute motivic classes. Discussions with Verónica Calvo-Cortés provided valuable insight into the toric-geometric aspects of the paper.
	\section{The setup}\label{theSetup}
	\subsection{Representations of the doubled quiver}\label{repsDoubleQuiverSection}
	Let $Q=(Q_0,Q_1,h,t)$ be a finite acyclic quiver. Here $Q_0$ and $Q_1$ are finite sets of vertices and edges respectively and $h,t:Q_1\to Q_0$ maps that assign to an edge its corresponding head and tail vertices. For $\mathbf{d}=(d_i)_{i\in Q_0}\in \mathbb{N}^{|Q_0|}$ a dimension vector, a $\mathbf{d}$-dimensional \emph{representation} of $Q$ is a tuple $\mathbf{V}=(V=(V_i)_{i\in Q_0},X=(X_{\alpha})_{\alpha\in Q_1})$ where $V_i=\mathbb{C}^{d_i}$ and $X_{\alpha}:V_{t\alpha}\to V_{h\alpha}$ is a linear map. Given two representations $\mathbf{V}=(V,X)$ and $\mathbf{V}'=(V',X')$ of $Q$ not necessarily of the same dimension, a \emph{morphism} $f:\mathbf{V}\to \mathbf{V}'$ is given by a family of linear maps $f=(f_i:V_i\to V'_i)_{i\in Q_0}$ for which the diagram  
	\begin{equation}\label{RepMorphismDiagram}
		\begin{tikzcd}[column sep =large, row sep =large]\
			V_{t\alpha} \arrow[r,"X_\alpha"] \arrow[d,"f_{t\alpha}",'] & V_{h\alpha}\arrow[d,"f_{h\alpha}"] \\
			V'_{t\alpha}\arrow[r,',"X'_\alpha"] & V'_{h\alpha}
		\end{tikzcd}
	\end{equation}
	commutes for all $\alpha\in Q_1$. 
	\\
	\\
	All $\mathbf{d}$-dimensional representations of $Q$ are parameterised  by the affine space
	$$
	\text{Rep}(Q,\mathbf{d}):=\bigoplus_{\alpha\in Q_1}\text{Hom}(\mathbb{C}^{d_{t\alpha}},\mathbb{C}^{d_{h\alpha}}).
	$$
	The reductive group $\text{GL}(\mathbf{d})=\prod_{i\in Q_0}\text{GL}(d_i)$ acts algebraically on the parameter space $\text{Rep}(Q,\mathbf{d})$ by conjugation:
	\begin{equation}\label{actionGLOnQuiverReps}
		g\cdot \mathbf{V}=(V,(g_{h\alpha}X_\alpha g_{t\alpha}^{-1})_{\alpha\in Q_1})
	\end{equation}
	for all $g:=(g_i)_{i\in Q_0}\in \text{GL}(\mathbf{d})$ and $\mathbf{V}=(V,X)\in \text{Rep}(Q,\mathbf{d})$. Notice that the orbits for this action correspond to isomorphism classes of $\mathbf{d}$-dimensional quiver representations.
	\\
	\\
	Let $\overline{Q}$ be the doubled quiver of $Q$. This is the quiver such that $\overline{Q}_0=Q_0$ and $\overline{Q}_1=Q_1\sqcup \{\overline{\alpha}\}_{\alpha\in Q_1}$ where $t\overline{\alpha}=h\alpha$ and $h\overline{\alpha}=t\alpha$. The total space of the cotangent bundle, $T^*\text{Rep}(Q,\mathbf{d})\to \operatorname{Rep}(Q,\mathbf{d})$, can be canonically identified with
	\begin{equation}\label{cotangentBundleFramedQuiverReps}
		\text{Rep}(\overline{Q},\mathbf{d})=\bigoplus_{\alpha\in\overline{Q} _1}\text{Hom}(\mathbb{C}^{d_{t\alpha}},\mathbb{C}^{d_{h\alpha}})
	\end{equation}
	via the trace pairing 
	\begin{equation}\label{tracePairing}
		\begin{matrix}
			\text{Hom}(W,V)&\longrightarrow&\text{Hom}(V,W)^{\vee}\\ \varphi&\longmapsto &\text{Tr}(\varphi \ \bullet)
		\end{matrix}.
	\end{equation}
	Note that there is an essentially surjective and forgetful functor, \texttt{Forget}, from the category of $\mathbf{d}$-dimensional representations of $\overline{Q}$ onto the category of $\mathbf{d}$-dimensional representations of $Q$. Indeed, for $\mathbf{V}=(V,(X_\alpha)_{\alpha\in Q_1},(Y_{\overline{\alpha}})_{\alpha\in Q_1})\in \text{Rep}(\overline{Q},\mathbf{d})$, $\texttt{Forget}(\mathbf{V})=(V,X)\in \text{Rep}(Q,\textbf{d})$, and for $f$ a morphism of $\mathbf{d}$-dimensional representations of the doubled quiver, \texttt{Forget}$(f)=f$.
	\begin{remark}
	Notice that at the level of objects, the functor $\texttt{Forget}$ is no more than the projection morphism from the cotangent bundle onto $\operatorname{Rep}(Q,\mathbf{d})$. 
	\end{remark}
	For all $i\in Q_0$ we fix a hermitian inner product on the vector spaces $\mathbb{C}^{d_i}$. This induces a hermitian inner product on each direct summand of Equation (\ref{cotangentBundleFramedQuiverReps}) via the trace pairing $(A,B)\mapsto\text{Tr}(AB^*)$, $(\_)^*$ being the hermitian adjoint, and hence one on $\text{Rep}(\overline{Q},\mathbf{d})$. The group $\text{U}(\mathbf{d})=\prod_{i\in Q_0}\text{U}(d_i)$ acts on $\text{Rep}(\overline{Q},\mathbf{d})$ by conjugation as in Equation (\ref{actionGLOnQuiverReps}). This action preserves the hermitian inner product just described.
	\begin{remark}\label{diagonalSubgroupsActingTrivially}
	Notice that for both group actions described, the diagonal groups $\mathbb{C}^*\hookrightarrow\operatorname{GL}(\mathbf{d})$ and $\operatorname{U}(1)\hookrightarrow\operatorname{U}(\mathbf{d})$ act trivially on $\operatorname{Rep}(\overline{Q},\mathbf{d})$ so they factor through the groups $\operatorname{PGL}(\mathbf{d})$ and $\operatorname{PU}(\mathbf{d})$ respectively. 
	\end{remark}
	The space $\text{Rep}(\overline{Q},\mathbf{d})$ has three complex structures 
	$$
	\mathbf{I}(X,Y)=\sqrt{-1}(X,Y),\ \mathbf{J}(X,Y)=(-Y_{\overline{\alpha}}^*,X_{\alpha}^*)_{\alpha\in Q_1},\ \mathbf{K}(X,Y)=\sqrt{-1}(-Y_{\overline{\alpha}}^*,X_{\alpha}^*)_{\alpha\in Q_1}
	$$
	which satisfy the quaternionic relations. Here we have denoted a representation simply as a pair $(X,Y)$. The Riemannian metric, given by the real part of the hermitian inner product,
	$$
	g((X,Y),(X',Y'))=\text{Re}(\operatorname{Tr}(\sum_{\alpha\in Q_1}X_\alpha (X'_\alpha)^*+(Y'_{\overline{\alpha}})^*Y_{\overline{\alpha}})),
	$$
	together with the three complex structures just defined endows $\text{Rep}(\overline{Q},\mathbf{d})$ with the structure of Hyperkähler manifold. The $\text{U}(\mathbf{d})$-action has an associated Hyperkähler moment map with real and complex components given by 
	\begin{equation}\label{realMomentMapDef}
		\mu_{\mathbb R}(\mathbf{V})
		=
		\left(
		\sum_{\substack{h\alpha=i\\ \alpha\in Q_1}} X_\alpha X_\alpha^*
		-
		\sum_{\substack{t\alpha=i\\ \alpha\in Q_1}} X_\alpha^*X_\alpha
		+
		\sum_{\substack{t\alpha=i\\ \alpha\in Q_1}} Y_{\overline{\alpha}} Y_{\overline{\alpha}}^*
		-
		\sum_{\substack{h\alpha=i\\ \alpha\in Q_1}} Y_{\overline{\alpha}}^*Y_{\overline{\alpha}}
		\right)_{i\in Q_0}\in \text{Herm}(\mathbf{d})
	\end{equation}
	\begin{equation}\label{complexMomentMapDef}
		\mu_{\mathbb{C}}(\mathbf{V})=\bigg(\sum_{\substack{h\alpha=i\\\alpha\in Q_1}}X_\alpha Y_{\overline{\alpha}}-\sum_{\substack{t\alpha=i\\\alpha\in Q_1}}Y_{\overline{\alpha}}X_\alpha\bigg)_{i\in Q_0}\in \mathfrak{gl}(\mathbf{d}).
	\end{equation}
	Here $\operatorname{Herm}(\mathbf{d})=\bigoplus_{i\in Q_0}\operatorname{Herm}(d_i)$ and $\mathfrak{gl}(\mathbf{d})$ is the Lie algebra of $\text{GL}(\mathbf{d})$. 
	\subsection{Hyperkähler reduction}\label{hyperKReduction}
	Good references for this section are Ballandras \cite{BallandrasTrivializations}, Nakajima \cite{Nakajima} and Hitchin et al. \cite{HitchinHyperK}. Let 
	$$
	\mathfrak{z}_{\mathbb{R}}(\mathbf{d})=\{(\theta_{i}\operatorname{Id}_{V_i})_{i\in Q_0}|\theta_i\in \mathbb{R}\} \text{ and } \mathfrak{z}_{\mathbb{C}}(\mathbf{d})=\{(\lambda_i \text{Id}_{V_i})_{i\in Q_0}|\lambda_i\in\mathbb{C}\}.
	$$
	For $(\theta,\lambda)\in \mathfrak{z}_{\mathbb{R}}(\mathbf{d})\times\mathfrak{z}_{\mathbb{C}}(\mathbf{d})$
	we define
	\begin{equation}\label{HyperKQuiverQuotient}
		\mathcal{M}^{\theta,\lambda}(\overline{Q},\mathbf{d}):=\mu_{\mathbb{R}}^{-1}(\theta)\cap\mu^{-1}_{\mathbb{C}}(\lambda)/\operatorname{PU}(\mathbf{d}).
	\end{equation}
	This quotient may have singularities, however, for an appropriate choice of stability parameters $\theta$ and $\lambda$, the corresponding quotient is a non-singular Hyperkähler manifold. Indeed, one can show that
	$$
	H_\mathbf{d}^{\text{reg}}=H_{\mathbf{d}}\setminus\bigcup_{0<\mathbf{e}<\mathbf{d}}H_{\mathbf{e}}
	$$
	is the set of parameters for which the quotient in Equation (\ref{HyperKQuiverQuotient}) is a non-singular Hyperkähler variety. Here $\mathbf{e}<\mathbf{d}$ means that $e_i\leq d_i$ for all $i\in Q_0$ and $\mathbf{e}\neq \mathbf{d}$, and 
	$$
	H_{\mathbf{d}}=\{(\theta,\lambda)\in \mathfrak{z}_{\mathbb{R}}(\mathbf{d})\times\mathfrak{z}_{\mathbb{C}}(\mathbf{d})\ | \ \theta\cdot\mathbf{d}=\lambda\cdot \mathbf{d}=0\}.
	$$
	We say that $(\theta,\lambda)\in H_\mathbf{d}^{\text{reg}}$ is a \emph{generic} parameter. 
	\\
	\\
	From now on we will assume $\theta$ to be generic and $\lambda=0$. Notice also that the genericity of $\theta$ implies that the dimension vector $\mathbf{d}$ is indivisible, that is, $\operatorname{gcd}\{d_i\}_{i\in Q_0}=1$.
	\subsection{Description of the points and comparison with GIT quotient}\label{GITDescriptionNakajimaQuiverVarieties}
	The closed affine subvariety $\mu^{-1}_{\mathbb{C}}(0)\subseteq\operatorname{Rep}(\overline{Q},\mathbf{d})$ is invariant under the action of the complexification, $\text{GL}(\mathbf{d})$, of the unitary group $\text{U}(\mathbf{d})$. Also notice $\mu^{-1}_{\mathbb{C}}(0)=\operatorname{Rep}(\overline{Q},\mathbf{d},\mathcal{R})$ (see Section \ref{representationsWIthRelations}), for 
	$$
	\mathcal{R}=\{\sum_{\substack{h\alpha=i\\ \alpha\in Q_1}}\alpha \overline{\alpha}-\sum_{\substack{t\alpha=i\\ \alpha\in Q_1}}\overline{\alpha}\alpha \mid i\in Q_0 \}.
	$$
	We will refer to the representations parameterised by the zero fiber of the complex moment map as \emph{Nakajima quiver representations}. By results of King \cite{KingModuli}, Nakajima \cite{Nakajima}, Harada and Wilkin \cite{HaradaWilkin} and Hoskins \cite{HoskinsStratifications}, for $\theta\in \mathbb{Z}^{|Q_0|}$ generic, there is a homeomorphism between the HyperKähler reduction in Equation (\ref{HyperKQuiverQuotient}) and the GIT quotient of $\theta$-stable points by the action of $\text{GL}(\mathbf{d})$:
	$$
	\mathcal{M}^{\theta}(\overline{Q},\mathbf{d}):=\mathcal{M}^{\theta,0}(\overline{Q},\mathbf{d})\simeq \mu^{-1}_\mathbb{C}(0)^{\theta\text{-}s}\sslash\text{GL}(\mathbf{d}).
	$$
	We will refer to this quotient so as its analytic incarnation in Equation (\ref{HyperKQuiverQuotient}) as \emph{Nakajima quiver varieties}.  $\mathcal{M}^\theta(\overline{Q},\mathbf{d})$ is a geometric quotient, meaning that it is parameterising isomorphism classes of $\theta$-stable representations. 
	Let us recall that a representation $\mathbf{V}\in \text{Rep}(\overline{Q},\mathbf{d})$ is $\theta$-(semi)stable if for all proper non-zero subrepresentation $\mathbf{W}\subseteq \mathbf{V}$ we have 
	\begin{equation}\label{stabilityConditionQuivers}
		\mu_{\theta}(\mathbf{W})(\leq)<\mu_{\theta}(\mathbf{V})
	\end{equation}
	where 
	$$
	\mu_{\theta}(\mathbf{W})=\frac{\theta\cdot\mathbf{d}_{\mathbf{W}}}{|\mathbf{d}_{\mathbf{W}}|}.
	$$
	Here $\mathbf{d}_{\mathbf{W}}$ is the dimension vector of $\mathbf{W}$ and $|\mathbf{d}_\mathbf{W}|=\sum_{i\in Q_0}\dim(W_i)$. Notice, since we assumed $\theta$ to be generic, that the right-hand side in the inequality of Equation (\ref{stabilityConditionQuivers}) is zero and that $\theta$-semistability is equivalent to $\theta$-stability. Important features of $\theta$-stable representations are that they are indecomposable, they have only scalar automorphisms and that there are no non-zero morphisms between non-isomorphic stable representations of the same slope. From the GIT formalism, we also note that there is a projective morphism
	$$
	\pi:\mathcal{M}^{\theta}(\overline{Q},\mathbf{d}) \to \mu^{-1}_{\mathbb{C}}(0)\sslash\text{GL}(\mathbf{d})
	$$ 
	which sends every $\theta$-stable representation to its corresponding semisimplification (see, for instance, \cite[Theorem 10.12]{libroKirillov}). The subvariety $\mathcal{N}:=\pi^{-1}(0)$ is known as the \emph{nilpotent cone}. 
	
	\subsection{Deformation theory}\label{deformationTheorySection}
	Let $\mathbf{V}=(V,X)$ and $\mathbf{W}=(W,X')$ be representations of the quiver $Q$ of dimensions $\mathbf{d}_\mathbf{V}$ and $\mathbf{d}_\mathbf{W}$ respectively. Consider the two-term complex of vector spaces 
	\begin{equation}\label{deformationComplexQuiverVarieties}
		\mathcal{H}om^\bullet(\mathbf{V},\mathbf{W}): \bigoplus_{i\in Q_0}\text{Hom}(V_i,W_i)\xlongrightarrow{\delta}\bigoplus_{\alpha\in Q_1}\text{Hom}(V_{t\alpha},W_{h\alpha})
	\end{equation}
	where 
	$$
	\delta(f_i)_{i\in Q_0}=(f_{h\alpha}X_{\alpha}-X'_\alpha f_{t\alpha})_{\alpha\in Q_1}.
	$$
	Also, one can see that 
	$$
	H^0(\mathcal{H}om^\bullet(\mathbf{V},\mathbf{W}))=\ker\delta=\text{Hom}(\mathbf{V},\mathbf{W}), \
	H^1(\mathcal{H}om^\bullet(\mathbf{V},\mathbf{W}))=\text{coker }\delta=\text{Ext}^1(\mathbf{V},\mathbf{W})
	$$
	yielding the exact sequence
	\begin{equation}\label{fundamentalExactSequenceQuivers}
		0\to \text{Hom}(\mathbf{V},\mathbf{W})\to  \bigoplus_{i\in Q_0}\text{Hom}(V_i,W_i) \to \bigoplus_{\alpha\in Q_1}\text{Hom}(V_{t\alpha},W_{h\alpha})\to \text{Ext}^1(\mathbf{V},\mathbf{W})\to 0.
	\end{equation}
	Moreover, 
	\begin{align*}
		\chi(\mathcal{H}om^\bullet(\mathbf{V},\mathbf{W}))&=\dim(\text{Hom}(\mathbf{V},\mathbf{W}))-\dim(\text{Ext}^1(\mathbf{V},\mathbf{W})) \\ &= \langle \mathbf{d}_\mathbf{V},\mathbf{d}_\mathbf{W}\rangle_Q.
	\end{align*}
	We recall that the pairing $\langle\bullet\ ,\bullet \rangle_Q:\mathbb{Z}^{Q_0}\times\mathbb{Z}^{Q_0}\to \mathbb{Z} $ given by
	$$
	\langle\mathbf{d},\mathbf{d}'\rangle_Q=\sum_{i\in Q_0}d_id_i'-\sum_{\alpha\in Q_1}d_{t\alpha}d'_{h\alpha}
	$$
	is the so-called \emph{Euler form} of $Q$.
	\\
	\\
	The exact sequence in Equation (\ref{fundamentalExactSequenceQuivers}) is fundamental in the study of the representation theory of $Q$ as well as in that of the geometry of the associated moduli spaces. For instance, $\mathcal{H}om^\bullet (\mathbf{V},\mathbf{V})$ is the deformation complex at the point $[\mathbf{V}]$, representing the isomorphism class of $\mathbf{V}$, of the moduli space $\text{Rep}(Q,\mathbf{d}_\mathbf{V})^{\theta\text{-}s}/\text{GL}(\mathbf{d})$ parameterising $\theta$-stable representations up to isomorphism. Also, $H^1(\mathcal{H}om^\bullet(\mathbf{V},\mathbf{V}))=\text{Ext}^1(\mathbf{V},\mathbf{V})$ is the tangent space of this moduli space at the point $[\mathbf{V}]$. See \cite[Section 2.2]{libroKirillov} for a detailed exposition of what we have just discussed. 
	\\
	\\
	In the case we are interested in, which is that of the moduli space $\mathcal{M}^{\theta}(\overline{Q},\mathbf{d})$, the deformation complex has one extra term which takes into account the complex moment map:
	\begin{equation}\label{deformationComplexNakajimaQuiverVarieties}
		\overline{\mathcal{H}om}^\bullet(\mathbf{V}):\mathfrak{gl}(\mathbf{d})\xlongrightarrow{\delta_0}\bigoplus_{\alpha\in Q_1}\text{Hom}(V_{t\alpha},V_{h\alpha})\oplus\text{Hom}(V_{t\overline{\alpha}},V_{h\overline{\alpha}})\xlongrightarrow{\delta_1}\mathfrak{gl}(\mathbf{d})
	\end{equation}
	where $\mathbf{V}=((V_i)_{i\in Q_0},(X_\alpha)_{\alpha\in Q_1},(Y_{\overline{\alpha}})_{\alpha\in Q_1})\in \mu^{-1}_\mathbb{C}(0)\subseteq\text{Rep}(\overline{Q},\mathbf{d})$,
	$$
	\delta_0(f_i)_{i\in Q_0}=(f_{h\alpha}X_{\alpha}-X_\alpha f_{t\alpha},f_{t\alpha}Y_{\overline{\alpha}}-Y_{\overline{\alpha}}f_{h\alpha})_{\alpha\in Q_1}
	$$
	and 
	$$
	\delta_1(\psi_\alpha,\psi_{\overline{\alpha}})_{\alpha\in Q_1}=\bigg(\sum_{\substack{h\alpha=i\\\alpha\in Q_1}}(X_\alpha\psi_{\overline{\alpha}}+\psi_\alpha Y_{\overline{\alpha}})-\sum_{\substack{t\alpha=i\\\alpha\in Q_1}}(Y_{\overline{\alpha}}\psi_\alpha+\psi_{\overline{\alpha}}X_\alpha)\bigg)_{i\in Q_0}.
	$$
	Notice that $\delta_0$ is precisely the infinitesimal action or derivative of the action in Equation (\ref{actionGLOnQuiverReps}) and that $\delta_1$ is the derivative of the complex moment map given in Equation (\ref{complexMomentMapDef}). It is also interesting to note that $\overline{\mathcal{H}om}^\bullet(\mathbf{V})=\text{Tot}(C^{\bullet\bullet}(\mathbf{V}))$ where $C^{\bullet\bullet}(\mathbf{V})$ is the double complex given by 
	\begin{center}
		\begin{tikzcd}[column sep =large, row sep =large]\
			\bigoplus\limits_{\alpha\in Q_1}\text{Hom}(V_{t\overline{\alpha}},V_{h\overline{\alpha}}) \arrow[r,"\delta_1^h"]  & \mathfrak{gl}(\mathbf{d})\\
			\mathfrak{gl}(\mathbf{d})\arrow[r,',"\delta_0^h"] \arrow[u,"\delta_0^v"]& \bigoplus\limits_{\alpha\in Q_1}\text{Hom}(V_{t\alpha},V_{h\alpha})\arrow[u,"\delta_1^v",'] 
		\end{tikzcd}
	\end{center}
	where 
	$$
	\delta^h_0(f_i)_{i\in Q_0}=(f_{h\alpha}X_{\alpha}-X_\alpha f_{t\alpha})_{\alpha\in Q_1}, \ \delta_0^v(f_i)_{i\in Q_0}=(f_{t\alpha}Y_{\overline{\alpha}}-Y_{\overline{\alpha}}f_{h\alpha})_{\alpha\in Q_1},
	$$
	$$
	\delta_1^h(\psi_{\overline{\alpha}})_{\alpha\in Q_1}=\bigg(\sum_{\substack{h\alpha=i\\\alpha\in Q_1}}X_\alpha\psi_{\overline{\alpha}}-\sum_{\substack{t\alpha=i\\\alpha\in Q_1}}\psi_{\overline{\alpha}}X_\alpha\bigg)_{i\in Q_0}\text{ and }\delta_1^v(\psi_\alpha)=-\bigg(\sum_{\substack{h\alpha=i\\\alpha\in Q_1}}\psi_\alpha Y_{\overline{\alpha}}-\sum_{\substack{t\alpha=i\\\alpha\in Q_1}}Y_{\overline{\alpha}}\psi_\alpha\bigg)_{i\in Q_0}.
	$$
	Using the trace pairing in Equation (\ref{tracePairing}) one can see that $\delta_1^h=(\delta_0^h)^{\vee}$ and $\delta_1^v=(\delta_0^v)^\vee$. Therefore $C^{\bullet 1}$ is the dual complex to $C^{\bullet 0}$ which is actually that in Equation (\ref{deformationComplexQuiverVarieties}).
	\begin{proposition}Let $\mathbf{V}\in\mu^{-1}_{\mathbb{C}}(0)$ be a $\theta$-stable representation, then \label{eulerCharacteristicDeformationComplexNakajimaQuiverVarieties}
		$\chi(\overline{\mathcal{H}om}^\bullet(\mathbf{V}))=2\langle\mathbf{d},\mathbf{d}\rangle_Q$.
	\end{proposition}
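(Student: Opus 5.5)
The Euler characteristic of a finite complex of finite-dimensional vector spaces is the alternating sum of the dimensions of its terms, independently of the differentials. The plan is therefore a direct dimension count for the three terms of $\overline{\mathcal{H}om}^\bullet(\mathbf{V})$ in Equation (\ref{deformationComplexNakajimaQuiverVarieties}), placed in degrees $0,1,2$. Stability of $\mathbf{V}$ plays no role in the numerical identity. It only matters for interpreting the cohomology: $H^0$ is the scalars and, by duality, $H^2$ is one-dimensional, so $H^1$ has dimension $2-\chi$.

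The key steps, in order, are as follows.
\begin{enumerate}
\item The degree $0$ and degree $2$ terms are both $\mathfrak{gl}(\mathbf{d})$, of dimension $\sum_{i\in Q_0} d_i^2$ each.
\item The middle term is $\bigoplus_{\alpha\in Q_1}\text{Hom}(V_{t\alpha},V_{h\alpha})\oplus\text{Hom}(V_{h\alpha},V_{t\alpha})$, of dimension $2\sum_{\alpha\in Q_1} d_{t\alpha}d_{h\alpha}$.
\item Hence
\[
\chi(\overline{\mathcal{H}om}^\bullet(\mathbf{V}))=2\sum_{i\in Q_0}d_i^2-2\sum_{\alpha\in Q_1}d_{t\alpha}d_{h\alpha}=2\langle\mathbf{d},\mathbf{d}\rangle_Q .
\]
\end{enumerate}

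One could instead use the double complex $C^{\bullet\bullet}(\mathbf{V})$ described above. Its total complex has Euler characteristic $\chi(C^{\bullet0})+\chi(C^{\bullet1})$ with a sign shift on the second row. The bottom row $C^{\bullet 0}$ is the complex (\ref{deformationComplexQuiverVarieties}) for $\mathbf{W}=\mathbf{V}$, so its Euler characteristic is $\langle\mathbf{d},\mathbf{d}\rangle_Q$. The top row, placed in total degrees $1,2$, is dual to it, since $\delta_1^h=(\delta_0^h)^\vee$. Its contribution is therefore $(-1)\cdot(-\langle\mathbf{d},\mathbf{d}\rangle_Q)=\langle\mathbf{d},\mathbf{d}\rangle_Q$.

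There is no real obstacle. The only care needed is with signs and degree conventions: the dual row sits in total degrees $1$ and $2$, so its Euler characteristic enters with the correct sign. The direct count sidesteps this bookkeeping.
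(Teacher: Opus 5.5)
Your proof is correct and is essentially the paper's argument. The paper writes $\chi$ as $\dim H^0-\dim H^1+\dim H^2$ and uses rank--nullity to reduce it to $2\dim\mathfrak{gl}(\mathbf d)-\dim\operatorname{Rep}(\overline Q,\mathbf d)$, which is exactly your alternating count of the dimensions of the three terms. Your remarks that stability plays no role in the numerical identity, and that your double-complex variant gives the same answer, are also accurate.
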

	\begin{proof}
		Recall the Euler characteristic of the complex is the alternating sum of the dimensions of the cohomology groups of the complex. In this case, this reads:
		\begin{align*}
			\chi(\overline{\mathcal{H}om}^\bullet(\mathbf{V}))&=\dim(\ker\delta_0)-\dim(\ker\delta_1)+\dim(\text{Im}\ \delta_0)+\dim(\text{coker}\delta_1)\\&= 2\dim(\mathfrak{gl}(\mathbf{d}))-\dim(\text{Rep}(\overline{Q},\mathbf{d}))
		\end{align*}
		so the conclusion follows. 
	\end{proof}
	What can we say about the cohomology groups of the complex $\overline{\mathcal{H}om}^\bullet(\mathbf{V})$ and what do they tell us about the geometry of the moduli space $\mathcal{M}^{\theta}(\overline{Q},\mathbf{d})$ in a neighborhood of $[\mathbf{V}]$? 
	\begin{proposition}\label{cohomologiesDeformationComplexNakajimaQuiverVarieties}
	Let $[\mathbf{V}]\in \mathcal{M}^{\theta}(\overline{Q},\mathbf{d})$, then $H^0(\overline{\mathcal{H}om}^\bullet(\mathbf{V}))\cong H^2(\overline{\mathcal{H}om}^\bullet(\mathbf{V}))^\vee\cong\mathbb{C}$ and $T_{[\mathbf{V}]}\mathcal{M}^{\theta}(\overline{Q},\mathbf{d})\cong H^1(\overline{\mathcal{H}om}^\bullet(\mathbf{V}))\cong H^1(\overline{\mathcal{H}om}^\bullet(\mathbf{V}))^\vee$.
	\end{proposition}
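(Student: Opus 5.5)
We compute the three cohomology groups of $\overline{\mathcal{H}om}^\bullet(\mathbf{V})$ directly from the definitions of $\delta_0$ and $\delta_1$, using the duality $\delta_1=\delta_0^\vee$ established above, and then identify $H^1$ with the tangent space via the usual description of a free quotient of a smooth fiber.

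\emph{Step 1: $H^0$.} By definition $\ker\delta_0$ consists of the tuples $(f_i)_{i\in Q_0}\in\mathfrak{gl}(\mathbf{d})$ with $f_{h\alpha}X_\alpha=X_\alpha f_{t\alpha}$ and $f_{t\alpha}Y_{\overline\alpha}=Y_{\overline\alpha}f_{h\alpha}$ for all $\alpha\in Q_1$. This is exactly $\operatorname{End}(\mathbf{V})$ as a representation of $\overline{Q}$. Since $\mathbf{V}$ is $\theta$-stable, it has only scalar endomorphisms: any endomorphism has an eigenvalue $c$, and $f-c\operatorname{Id}$ is then a non-invertible endomorphism between stable representations of the same slope, hence zero. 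So $H^0\cong\mathbb{C}$, spanned by the diagonal scalars of Remark \ref{diagonalSubgroupsActingTrivially}.

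\emph{Step 2: $H^2$ and duality.} The combined map $\delta_1$ is identified, under the trace pairing, with the dual of $\delta_0$. The summands $\operatorname{Hom}(V_{t\alpha},V_{h\alpha})$ and $\operatorname{Hom}(V_{h\alpha},V_{t\alpha})$ are swapped by the pairing, and there may be a sign, which should be checked from the formulas for $\delta_1^h$ and $\delta_1^v$. More precisely, equip $\operatorname{Rep}(\overline{Q},\mathbf{d})$ with the symplectic form
\[
\omega\bigl((\psi,\phi),(\psi',\phi')\bigr)=\sum_{\alpha\in Q_1}\operatorname{Tr}\bigl(\psi_\alpha\phi'_{\overline\alpha}-\psi'_\alpha\phi_{\overline\alpha}\bigr).
\]
One checks that $\operatorname{Tr}\bigl(f\,\delta_1(v)\bigr)=\pm\,\omega\bigl(\delta_0(f),v\bigr)$. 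This is the standard fact that the derivative of a moment map is dual to the infinitesimal action, and the formulas for $\delta_1^h$ and $\delta_1^v$ give it directly. Hence, using the nondegeneracy of $\omega$ and of the trace pairing on $\mathfrak{gl}(\mathbf{d})$,
\[
\operatorname{coker}\delta_1\cong(\ker\delta_0)^\vee,\qquad\text{so } H^2\cong (H^0)^\vee\cong\mathbb{C}.
\]
Furthermore, $\omega$ restricts to a pairing between $\ker\delta_1$ and $\ker\delta_1$. Its null space on $\ker\delta_1$ is $(\ker\delta_1)^{\perp_\omega}\cap\ker\delta_1$. Since $(\ker\delta_1)^{\perp_\omega}=\operatorname{Im}\delta_0$ by the duality, and $\operatorname{Im}\delta_0\subseteq\ker\delta_1$ because the composite is zero, this null space is exactly $\operatorname{Im}\delta_0$. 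Therefore $\omega$ descends to a nondegenerate pairing on $H^1$, giving $H^1\cong (H^1)^\vee$. As a consistency check, Proposition \ref{eulerCharacteristicDeformationComplexNakajimaQuiverVarieties} then gives $\dim H^1=2-2\langle\mathbf{d},\mathbf{d}\rangle_Q$.

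\emph{Step 3: the tangent space.} The image of $\delta_1$ is the trace-zero subspace $\{(g_i):\sum_i\operatorname{Tr}g_i=0\}$, of codimension $1$. This is precisely the image of $\mu_{\mathbb{C}}$ composed with the differential, since $\sum_i\operatorname{Tr}\mu_{\mathbb{C}}(\mathbf V)_i=0$ identically. Hence $\mu_{\mathbb{C}}$, viewed as a map into the trace-zero part of $\mathfrak{gl}(\mathbf{d})$, is a submersion at every stable point. Consequently $\mu_{\mathbb{C}}^{-1}(0)^{\theta\text{-}s}$ is smooth with tangent space $\ker\delta_1$ at $\mathbf{V}$.

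The group $\operatorname{PGL}(\mathbf{d})$ acts freely on this locus, because stabilizers are the automorphism groups, which are scalars by Step 1. The quotient is a principal bundle by Luna's étale slice theorem, or equivalently by King's construction, which we may quote. The tangent space of the quotient is therefore the tangent space of the fiber modulo the tangent space of the orbit, namely $\ker\delta_1/\operatorname{Im}\delta_0=H^1$.

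The main obstacle is Step 3: justifying smoothness of the quotient and the principal bundle structure rigorously, rather than merely at the level of tangent spaces. I would handle it by citing the freeness of the $\operatorname{PGL}(\mathbf{d})$-action together with Luna's slice theorem, as in \cite{KingModuli} and \cite{Nakajima}.
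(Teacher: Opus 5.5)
Your proposal is correct and follows the same route as the paper: you identify $H^0(\overline{\mathcal{H}om}^\bullet(\mathbf{V}))=\operatorname{End}(\mathbf V)\cong\mathbb{C}$ by stability, obtain the dualities from $\delta_1$ being adjoint to $\delta_0$, and get $T_{[\mathbf V]}\mathcal{M}^\theta(\overline{Q},\mathbf{d})\cong\ker\delta_1/\operatorname{Im}\delta_0$. The extra details you supply only make rigorous what the paper asserts briefly: the adjunction $\omega(\delta_0 f,v)=\operatorname{Tr}(f\,\delta_1(v))$, which does hold with a plus sign and settles the sign that the paper's ``self-dual'' remark glosses over, and the submersion onto the trace-zero part of $\mathfrak{gl}(\mathbf d)$ together with the free $\operatorname{PGL}(\mathbf d)$-action for the tangent-space identification.
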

	\begin{proof}
		$H^0(\overline{\mathcal{H}om}^\bullet(\mathbf{V}))=\ker \delta_0$ consists of morphisms commuting with all the $X_\alpha$ and $Y_{\overline{\alpha}}$, $\alpha\in Q_1$. In other words, $H^0(\overline{\mathcal{H}om}^\bullet(\mathbf{V}))=\text{End}(\mathbf{V})$ and, by stability, the only endomorphisms are scalars. The claimed dualities come from the fact that $\delta_0^\vee=\delta_1$ meaning that the complex $\overline{\mathcal{H}om}^\bullet(\mathbf{V})$ is self-dual. Finally, for a $\theta$-stable point $\mathbf V$, the Zariski tangent space to the complex moment-map fiber is $\ker \delta_1$, and the tangent space to the $\operatorname{GL}(\mathbf d)$-orbit is $\operatorname{Im}\delta_0$. Therefore the tangent space to the geometric quotient at $[\mathbf V]$ is naturally identified with the middle cohomology
		\[
		T_{[\mathbf V]}\mathcal M^\theta(\overline Q,\mathbf d)
		\cong
		\frac{\ker\delta_1}{\operatorname{Im}\delta_0}
		=
		H^1(\overline{\mathcal H om}^\bullet(\mathbf V)).
		\]
	\end{proof}
	\begin{corollary}
	The dimension of the moduli space $\mathcal{M}^{\theta}(\overline{Q},\mathbf{d})$ is $\dim(H^1(\overline{\mathcal{H}om}^\bullet(\mathbf{V})))=2-2\langle\mathbf{d},\mathbf{d}\rangle_Q$.
	\end{corollary}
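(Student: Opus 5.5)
The corollary follows by combining Proposition \ref{eulerCharacteristicDeformationComplexNakajimaQuiverVarieties} with Proposition \ref{cohomologiesDeformationComplexNakajimaQuiverVarieties}. Fix a $\theta$-stable $\mathbf{V}\in\mu_{\mathbb{C}}^{-1}(0)$ representing a point $[\mathbf{V}]$. The Euler characteristic of the three-term complex $\overline{\mathcal{H}om}^\bullet(\mathbf{V})$ is
\[
\chi(\overline{\mathcal{H}om}^\bullet(\mathbf{V}))=\dim H^0-\dim H^1+\dim H^2 .
\]
This is the correct alternating sum; the displayed formula in the proof of Proposition \ref{eulerCharacteristicDeformationComplexNakajimaQuiverVarieties} has a sign slip in its first line, but its second line, $2\dim\mathfrak{gl}(\mathbf{d})-\dim\operatorname{Rep}(\overline{Q},\mathbf{d})=2\langle\mathbf{d},\mathbf{d}\rangle_Q$, is what we use. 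By Proposition \ref{cohomologiesDeformationComplexNakajimaQuiverVarieties}, stability forces $H^0\cong\mathbb{C}$ (only scalar endomorphisms), and self-duality of the complex gives $H^2\cong (H^0)^\vee\cong\mathbb{C}$. Substituting,
\[
2\langle\mathbf{d},\mathbf{d}\rangle_Q=1-\dim H^1+1,
\qquad\text{so}\qquad
\dim H^1(\overline{\mathcal{H}om}^\bullet(\mathbf{V}))=2-2\langle\mathbf{d},\mathbf{d}\rangle_Q .
\]

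Next we identify this number with the dimension of the moduli space. Proposition \ref{cohomologiesDeformationComplexNakajimaQuiverVarieties} gives $T_{[\mathbf{V}]}\mathcal{M}^\theta(\overline{Q},\mathbf{d})\cong H^1(\overline{\mathcal{H}om}^\bullet(\mathbf{V}))$. So the Zariski tangent space has the same dimension $2-2\langle\mathbf{d},\mathbf{d}\rangle_Q$ at every point, independent of $[\mathbf{V}]$.

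The main point needing care is passing from the Zariski tangent space to the actual dimension. For this we use smoothness of $\mathcal{M}^\theta(\overline{Q},\mathbf{d})$ at generic $\theta$ (Section \ref{hyperKReduction}). A more intrinsic route is to observe that $\dim H^2=1$ means $\delta_1$ has corank exactly $1$. Its image lies in the trace-zero hyperplane, namely the annihilator of the scalars $\ker\delta_0=\mathbb{C}$ under the trace pairing, and $\mu_{\mathbb{C}}$ takes values there. Hence $\mu_{\mathbb{C}}$ is a submersion onto that hyperplane at stable points, so $\mu_{\mathbb{C}}^{-1}(0)^{\theta\text{-}s}$ is smooth of dimension $\dim\ker\delta_1$. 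The group $\operatorname{PGL}(\mathbf{d})$ acts freely on this locus because stable points have only scalar automorphisms (Remark \ref{diagonalSubgroupsActingTrivially}). Therefore the geometric quotient is smooth of dimension $\dim\ker\delta_1-\dim\operatorname{Im}\delta_0=\dim H^1$, which is the claimed formula.
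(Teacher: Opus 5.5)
Your proof is correct and follows the paper's route: read $\dim H^1$ off from $\chi=\dim H^0-\dim H^1+\dim H^2=2\langle\mathbf{d},\mathbf{d}\rangle_Q$ with $\dim H^0=\dim H^2=1$, then use $T_{[\mathbf{V}]}\mathcal{M}^\theta(\overline{Q},\mathbf{d})\cong H^1(\overline{\mathcal{H}om}^\bullet(\mathbf{V}))$. The paper gets smoothness from genericity of $\theta$; your argument ($\mu_{\mathbb{C}}$ is a submersion onto the trace-zero hyperplane at stable points, and $\operatorname{PGL}(\mathbf{d})$ acts freely, with Luna's slice theorem giving a smooth quotient) is a valid self-contained substitute. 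One correction: there is no sign slip in the proof of Proposition \ref{eulerCharacteristicDeformationComplexNakajimaQuiverVarieties}, since $\dim H^1=\dim\ker\delta_1-\dim\operatorname{Im}\delta_0$ makes its first line $\dim\ker\delta_0-\dim\ker\delta_1+\dim\operatorname{Im}\delta_0+\dim\operatorname{coker}\delta_1$ exactly $\dim H^0-\dim H^1+\dim H^2$.
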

	\section{$\mathbb{C}^*$-action on Nakajima quiver varieties}\label{torusActionSection}
	\subsection{The $\mathbb{S}^1$ and $\mathbb{C}^*$ actions on the moduli space} The quotient $\mathcal{M}^{\theta}(\overline{Q},\mathbf{d})$ has a natural $\mathbb{S}^1$-action given by
	$$
	(t,(V,X,Y))\mapsto (V,X,tY).
	$$  
	
	\begin{proposition}  \cite[Theorem 5.1]{Nakajima} The circle action satisfies the following properties: 
		\begin{enumerate}
			\item It preserves the complex structure $\mathbf{I}$ and the Hyperkähler metric.
			\item The associated moment map 
			$$
			F(\mathbf{V})=\sum_{\alpha\in Q_1}\|Y_{\overline{\alpha}}\|^2
			$$
			is proper. 
			\item It extends to a holomorphic (with respect to the complex structure $\mathbf{I}$) $\mathbb{C}^*$-action.
		\end{enumerate}
	\end{proposition}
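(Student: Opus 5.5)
The three assertions will be checked on the prequotient $\mu_{\mathbb R}^{-1}(\theta)\cap\mu_{\mathbb C}^{-1}(0)\subseteq\operatorname{Rep}(\overline Q,\mathbf d)$, where $t\in\mathbb{S}^1$ acts by $(X,Y)\mapsto(X,tY)$. Everything then descends to $\mathcal M^\theta(\overline Q,\mathbf d)$ because this action commutes with the $\operatorname{U}(\mathbf d)$-action.

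\textbf{Step 1: invariance.} We have $\mu_{\mathbb C}(X,tY)=t\,\mu_{\mathbb C}(X,Y)$, since $\mu_{\mathbb C}$ is linear in $Y$. Also $\mu_{\mathbb R}(X,tY)=\mu_{\mathbb R}(X,Y)$, since $Y_{\overline\alpha}Y_{\overline\alpha}^*$ and $Y_{\overline\alpha}^*Y_{\overline\alpha}$ are unchanged when $|t|=1$. So both level sets are preserved. The action is $\mathbb C$-linear, hence commutes with $\mathbf I$. It preserves $g$, because $g$ is built from $\operatorname{Tr}(X X'^*)$ and $\operatorname{Tr}(Y'^*Y)$, which are invariant under $Y\mapsto tY$, $Y'\mapsto tY'$. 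Since the hyperkähler metric on the quotient is induced from $g$ restricted to the horizontal part of the level set, and $\mathbf I$ descends likewise, (1) follows.

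\textbf{Step 2: moment map and properness.} The vector field generating the action is $\xi=(0,\sqrt{-1}Y)$. With respect to $\omega_{\mathbf I}=g(\mathbf I\cdot,\cdot)$, the function $\tfrac12\sum\|Y_{\overline\alpha}\|^2$ (up to sign and normalization) is a Hamiltonian for $\xi$ on $\operatorname{Rep}(\overline Q,\mathbf d)$. It is $\operatorname{U}(\mathbf d)$-invariant, so it descends to the Hamiltonian $F$ on the reduced space. Properness of $F$ is the main obstacle. I would follow Nakajima's argument. Suppose $F\le C$ on a sequence of points. Then $Y$ is bounded, and we must also bound $X$ modulo $\operatorname{U}(\mathbf d)$. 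Taking the trace of $\mu_{\mathbb R}=\theta$ against suitable projections bounds $\sum\|X_\alpha\|^2$ in terms of $\|Y\|^2$ and $\theta$. Since $Q$ is acyclic, we can order the vertices so that arrows go forward. Summing $\operatorname{Tr}\mu_{\mathbb R,i}$ weighted by a strictly increasing function of the vertex order bounds each $\|X_\alpha\|^2$ by a combination of the $\theta_i d_i$ and the $\|Y_{\overline\alpha}\|^2$. For example, weighting vertex $i$ by its height $n_i$ gives
\[
\sum_\alpha (n_{h\alpha}-n_{t\alpha})\|X_\alpha\|^2=\sum_i n_i\theta_i d_i+\sum_\alpha(n_{h\alpha}-n_{t\alpha})\|Y_{\overline\alpha}\|^2,
\]
with $n_{h\alpha}-n_{t\alpha}\ge1$. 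So $\{F\le C\}$ is the image of a closed bounded, hence compact, subset of the level set, and its image in the quotient is compact.

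\textbf{Step 3: complexification.} On the algebraic side, $(X,Y)\mapsto(X,tY)$ with $t\in\mathbb C^*$ scales $\mu_{\mathbb C}$ by $t$, so it preserves $\mu_{\mathbb C}^{-1}(0)$. It preserves $\theta$-stability, since subrepresentations of $(X,Y)$ and $(X,tY)$ coincide. It commutes with $\operatorname{GL}(\mathbf d)$. It therefore descends to an algebraic, hence $\mathbf I$-holomorphic, action on $\mu_{\mathbb C}^{-1}(0)^{\theta\text{-}s}\sslash\operatorname{GL}(\mathbf d)$. Under the homeomorphism of Section \ref{GITDescriptionNakajimaQuiverVarieties}, its restriction to $\mathbb S^1$ is the circle action above, because the Kempf–Ness identification is $\operatorname{U}(\mathbf d)$-equivariant and the circle action preserves $\mu_{\mathbb R}^{-1}(\theta)$. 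This gives (3).
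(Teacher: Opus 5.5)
The paper gives no argument here beyond citing Nakajima, so your proposal is a self-contained substitute rather than a variant of an existing proof, and it is correct. What it buys over the bare citation is that it shows exactly where the standing hypothesis that $Q$ is acyclic is used. Weighting $\operatorname{Tr}\mu_{\mathbb R,i}=\theta_i d_i$ by a height function with $n_{h\alpha}-n_{t\alpha}\geq 1$ bounds $X$ by $F$ on the level set, and the proposition genuinely fails without acyclicity. For instance, take the cyclically oriented $\widetilde A_n$ quiver ($n\geq 1$) with $\mathbf d=\delta$ and generic $\theta$. The representations with $Y=0$ and all $X_\alpha\neq 0$ are $\theta$-stable. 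They satisfy $\mu_{\mathbb R}=\theta$ for a one-parameter family of values of the $\|X_\alpha\|^2$, since the equations determine them only up to a common additive constant. On this family the $\operatorname{GL}(\mathbf d)$-invariant cyclic product of the $X_\alpha$ is unbounded, so $F^{-1}(0)$ is non-compact. The one step that relies on more than you state is ``algebraic, hence $\mathbf I$-holomorphic'' in Step 3. This needs the Kempf--Ness map from the hyperk\"ahler quotient to $\mu_{\mathbb C}^{-1}(0)^{\theta\text{-}s}\sslash\operatorname{GL}(\mathbf d)$ to be a biholomorphism for $\mathbf I$, not merely the homeomorphism recorded in Section~\ref{GITDescriptionNakajimaQuiverVarieties}. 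That fact is standard (King, Nakajima) and should be cited. With it, your observation that the circle action preserves $\mu_{\mathbb R}^{-1}(\theta)\cap\mu_{\mathbb C}^{-1}(0)$ makes the identification $\mathbb{S}^1$-equivariant, and (3) follows.
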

	Now we study the critical set for the $\mathbb{S}^1$-action or, equivalently, the set of fixed points for the circle action which coincide with the fixed points for the associated $\mathbb{C}^*$-action.
	\\
	\\
	Let $[\mathbf{V}]\in \mathcal{M}^{\theta}(\overline{Q},\mathbf{d})$ be a fixed point for the $\mathbb{C}^*$-action. Then, there exists a one-parameter subgroup $g:\mathbb{C}^*\to \text{GL}(\mathbf{d})$ such that 
	\begin{equation}\label{oneParameterSubgroupTorusaction}
		g(t)\cdot \mathbf{V} = t\cdot\mathbf{V}=(V,X,(tY_{\overline{\alpha}})_{\alpha\in Q_1})
	\end{equation}
	for all $t\in \mathbb{C}^*$. Thus, for all $i\in Q_0$, $V_i$ becomes a finite-dimensional representation of $\mathbb{C}^*$ so it decomposes into weight spaces
	$$V_i=\bigoplus_{\lambda\in \mathbb{Z}}V_{i,\lambda}$$
	where  $V_{i,\lambda}=\{v\in V_i\ |\ g_i(t)v=t^\lambda v\text{ for all }t\in\mathbb C^*\}$ \cite[Theorem 12.12]{MilneAlgebraicGroups}.
	\\
	\\
	For $\alpha\in Q_1$, Equation (\ref{oneParameterSubgroupTorusaction}) gives that
	$
	g_{h\alpha}(t)X_\alpha g_{t\alpha}(t)^{-1}
	=
	X_\alpha
	$
	which implies that
	$X_\alpha(V_{t\alpha,\lambda})
	\subseteq
	V_{h\alpha,\lambda}$. Similarly, for the reversed arrow $\overline\alpha$, we have
	$
	g_{t\alpha}(t)Y_{\overline\alpha}g_{h\alpha}(t)^{-1}
	=
	tY_{\overline\alpha}
	$
	which gives
	$
	Y_{\overline\alpha}(V_{h\alpha,\lambda})
	\subseteq
	V_{t\alpha,\lambda+1}.
	$
	Thus the representation decomposes into weight spaces in such a way that the
	maps along arrows of $Q$ preserve the weight, whereas the maps along reversed
	arrows increase the weight by one as seen in Figure \ref{fixedPoints}.
	\begin{figure}[H]
		\begin{center}
			\begin{tikzcd}[scale cd=0.7]
				&\cdots V_{t\alpha_1,\lambda+1}\arrow[dr,"X_{\alpha_1,\lambda+1}"]\arrow[from=5-3,magenta,"Y_{\overline{\alpha}_1,\lambda}",',bend left=12]&&&&& V_{t\beta_1,\lambda+1}\cdots \\
				&\iddots&V_{t\alpha,\lambda+1}\arrow[to=2-5,"X_{\alpha,\lambda+1}"]&&V_{h\alpha,\lambda+1}\arrow[from=1-7,"X_{\beta_1,\lambda+1}",']&\iddots&\\
				\cdots V_{h\alpha_k,\lambda+1}\arrow[from=2-3,"X_{\alpha_k,\lambda+1}",',crossing over]&&&&&V_{t\beta_j,\lambda+1}\arrow[from=ul,"X_{\beta_j,\lambda +1 }"]\cdots &\\
				
				&\cdots V_{t\alpha_1,\lambda}\arrow[dr,"X_{\alpha_1,\lambda}",']&&&&& V_{t\beta_1,\lambda}\cdots \\
				&\iddots&V_{t\alpha,\lambda}\arrow[to=6-1,"X_{\alpha_k,\lambda}"]\arrow[to=5-5,"X_{\alpha,\lambda}"]&&V_{h\alpha,\lambda}\arrow[orange,to=2-3,"Y_{\overline{\alpha},\lambda}",crossing over]\arrow[from=4-7,"X_{\beta_1,\lambda}"]\arrow[to=1-7,magenta,"Y_{\overline{\beta}_1,\lambda}",bend right=20,']&  \iddots&\\
				\cdots V_{h\alpha_k,\lambda}\arrow[to=2-3, bend left=20 ,blue,"Y_{\overline{\alpha}_k,\lambda}",crossing over ]&&&&& V_{t\beta_j,\lambda} \arrow[from=ul,"X_{\beta_j,\lambda}"]\cdots\arrow[to=2-5,blue,"Y_{\overline{\beta}_j,\lambda}",bend right=12,',crossing over]& 
			\end{tikzcd}
		\end{center}
		\vspace{-2mm}
		\caption{\label{fixedPoints}Fixed point for the $\mathbb{S}^1(\mathbb{C}^*)$-action}
	\end{figure}
	Conversely, a quiver representation that admits such a decomposition is a fixed point for the $\mathbb{C}^*$-action. Indeed, let $\mathbf{V}$ be such a representation, then for all $t\in\mathbb{C}^*$, $t\cdot\mathbf{V}\cong \mathbf{V}$ via the automorphism of quiver representations given by
	\begin{equation}\label{automorphismRepresentationAdmitingDecompositionIsFixedPoint}
		t^n\text{Id}:V_{i,n}\to V_{i,n}
	\end{equation}
	We summarize the previous discussion in the following theorem:
	\begin{theorem}\label{characterizationFixedPoints}
		A representation $\mathbf{V}\in \mu^{-1}_{\mathbb{C}}(0)^{\theta\text{-}s}$ is a fixed point for the $\mathbb{C}^*$-action if and only if there exists $\ell\in \mathbb{N}_{>0}$ such that: 
		\begin{itemize}
			\item For each $i\in Q_0$, there is a splitting $V_i=\bigoplus_{n=1}^\ell V_{i,n}$.
			\item For each $\alpha\in Q_1$, $X_\alpha(V_{t\alpha,n})\subseteq V_{h\alpha,n}$ for $n=1,\ldots,\ell$ and $Y_{\overline{\alpha}}(V_{h\alpha,n})\subseteq V_{t\alpha,n+1}$ for $n=1,\ldots,\ell-1$. We will denote $X_{\alpha,n}:=X_{\alpha}|_{V_{t\alpha,n}}$ and $Y_{\overline{\alpha},n}:=Y_{\overline{\alpha}}|_{V_{h\alpha,n}} $.
		\end{itemize}
	\end{theorem}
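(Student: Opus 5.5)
For the forward direction we would start from a fixed point $[\mathbf{V}]$ and produce the one-parameter subgroup $g:\mathbb{C}^*\to \operatorname{GL}(\mathbf{d})$ satisfying Equation (\ref{oneParameterSubgroupTorusaction}). This is the step that needs care. Since $\mathbf{V}$ is $\theta$-stable, its stabilizer in $\operatorname{GL}(\mathbf{d})$ is the diagonal $\mathbb{C}^*$ (Remark \ref{diagonalSubgroupsActingTrivially} and the fact that stable representations have only scalar automorphisms). For each $t$ there is therefore an element $g(t)$ with $g(t)\cdot\mathbf{V}=t\cdot \mathbf{V}$, unique up to scalars. Consider the closed subgroup
\[
G=\{(g,t)\in \operatorname{GL}(\mathbf{d})\times\mathbb{C}^* \mid g\cdot\mathbf{V}=t\cdot\mathbf{V}\}.
\]
Its projection onto $\mathbb{C}^*$ is surjective with kernel the diagonal $\mathbb{C}^*$, so $G$ is an extension of a torus by a torus. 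It is connected and commutative, since the kernel is central and the extension is trivial for algebraic tori. Hence $G$ is a two-dimensional torus, and a splitting gives the desired cocharacter $g$. This is the main obstacle; if the splitting argument is delicate, we would instead use that the extension $1\to\mathbb{C}^*\to G\to\mathbb{C}^*\to 1$ of diagonalizable groups splits, since $\operatorname{Ext}$ of $\mathbb{Z}$ by $\mathbb{Z}$ vanishes on character groups.

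With $g$ in hand, each $V_i$ decomposes into weight spaces $V_{i,\lambda}$. The computation preceding the theorem shows that $X_\alpha$ preserves weights and $Y_{\overline{\alpha}}$ raises them by one. Only finitely many weights occur. Replacing $g$ by $t^{-m}g(t)$, which is allowed since scalars act trivially, we may shift the weights to lie in $\{1,\ldots,\ell'\}$ for some $\ell'$. We then set $\ell=\ell'$ and reindex. Some $V_{i,n}$ may be zero, which is harmless. Maps $Y_{\overline{\alpha}}$ leaving the top weight $\ell$ land in weight $\ell+1$, which is zero, so they vanish; this matches the statement, which only constrains $Y$ for $n\le \ell-1$.

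For the converse, given such a splitting we define $g(t)$ to act by $t^n\operatorname{Id}$ on $V_{i,n}$, as in Equation (\ref{automorphismRepresentationAdmitingDecompositionIsFixedPoint}). A direct check shows $g(t)X_\alpha g(t)^{-1}=X_\alpha$ and $g(t)Y_{\overline{\alpha}}g(t)^{-1}=tY_{\overline{\alpha}}$. Here $Y_{\overline{\alpha}}$ restricted to $V_{h\alpha,\ell}$ must be zero; we would read the hypothesis as including this, or note that it is forced by the convention $V_{i,\ell+1}=0$. Hence $t\cdot\mathbf{V}\cong\mathbf{V}$, so $[\mathbf{V}]$ is fixed in $\mathcal{M}^\theta(\overline{Q},\mathbf{d})$, which is a geometric quotient parameterising isomorphism classes.
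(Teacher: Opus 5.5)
Your proposal is correct and follows the same route as the paper. Both arguments produce a cocharacter $g$ with $g(t)\cdot\mathbf{V}=t\cdot\mathbf{V}$, decompose into weight spaces on which the $X_\alpha$ have weight $0$ and the $Y_{\overline{\alpha}}$ have weight $1$, and prove the converse via the automorphisms $t^n\operatorname{Id}$ on $V_{i,n}$. The paper simply asserts that $g$ exists, so several parts of your argument are useful additions rather than departures: the torus-extension argument for $g$ (its commutativity step is best justified by noting that the commutator of a central extension of $\mathbb{C}^*$ by $\mathbb{C}^*$ descends to a bimultiplicative, hence trivial, pairing $\mathbb{C}^*\times\mathbb{C}^*\to\mathbb{C}^*$), the shift of the weights into $\{1,\ldots,\ell\}$, and your remark that $Y_{\overline{\alpha}}|_{V_{h\alpha,\ell}}=0$ must be read into the hypothesis.
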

	\begin{remark}\label{complexMomentMapConditionOnChains}
		For each $i\in Q_0$ and $n=1,\ldots,\ell-1$, the complex moment map condition gives $$\sum_{\substack{h\alpha=i\\\alpha\in Q_1}}X_{\alpha,n+1}Y_{\overline{\alpha},n}-\sum_{\substack{t\alpha=i\\\alpha\in Q_1}}Y_{\overline{\alpha},n}X_{\alpha,n}:V_{i,n}\to V_{i,n+1}\equiv 0.$$
	\end{remark}
	\begin{remark}\label{noEmptyLevels}
		There are no completely empty
		levels. More precisely,
		$$
		\sum_{i\in Q_0}\dim(V_{i,n})>0
		$$
		for every $n=1,\ldots,\ell$. Indeed, suppose that $V_{i,n}=0$ for every $i\in Q_0$ and some
		$1<n<\ell$. Since the ordinary arrows preserve the level and the opposite
		arrows increase it by one, the representations
		\[
		\mathbf V^{<n}
		:=
		\bigoplus_{r<n}\mathbf V_r
		\qquad\text{and}\qquad
		\mathbf V^{>n}
		:=
		\bigoplus_{r>n}\mathbf V_r
		\]
		are subrepresentations of $\mathbf V$. Consequently,
		\[
		\mathbf V=\mathbf V^{<n}\oplus\mathbf V^{>n}.
		\]
		Both summands are nonzero contradicting the indecomposability of the stable representation
		$\mathbf V$. In fact the same argument shows that there cannot be a level $n=1,\ldots,\ell-1$ for which $Y_{\overline{\alpha},n}=0 $ for all $\alpha\in Q_1$. 
	\end{remark}
	\begin{remark}\label{quiverChainsAreQuiverReps}
		Since we are assuming $Q$ acyclic, a quiver chain can be regarded as a representation with relations (see Section \ref{representationsWIthRelations}) of the acyclic quiver $Q^\ell$ given by the following data: $Q^\ell_0=\{(i,n)| i\in Q_0,\ n=1,\ldots,\ell\}$, $Q_1^\ell=\{(\alpha,n)| \alpha\in Q_1,\ n=1,\ldots,\ell\}\sqcup\{(\overline{\alpha},n)|\overline{\alpha}\in \overline{Q}_1\setminus Q_1,n=1,\ldots,\ell-1\}$ and for $\alpha\in Q_1$ we have that $h(\alpha,n)=(h\alpha,n),t(\alpha,n)=(t\alpha,n),h(\overline{\alpha},n)=(h\overline{\alpha},n+1)$ and $t(\overline{\alpha},n)=(t\overline{\alpha},n)$ (see, for instance, Figure \ref{ExampleOfQEll}). We denote by $\mathbf{d}^\ell$ the dimension vector of the representation of $Q^\ell$ obtained from a $\mathbf{d}$-dimensional fixed point. Notice that if $\mathbf{d}^\ell=(d_{i,n})_{\substack{i\in Q_0\\n=1,\ldots,\ell}}$, then  $\sum_{n=1}^\ell d_{i,n}=\text{dim}(V_{i})=d_i$ for all $i\in Q_0$. Notice also that the relations of the representation of $Q^\ell$ obtained can be read from the equality in Remark \ref{complexMomentMapConditionOnChains} and these are: 
		$$
		\mathcal{R}^\ell=\{\sum_{\substack{h\alpha=i\\ \alpha\in Q_1}}(\alpha,n+1) (\overline{\alpha},n)-\sum_{\substack{t\alpha=i\\ \alpha\in Q_1}}(\overline{\alpha},n)(\alpha,n) \mid i\in Q_0, n=1,\ldots,\ell-1 \}.
		$$
		We call such a representation of $Q^\ell$ a $Q$-chain or simply quiver chain and we denote it by $(V_{\bullet\bullet},X_{\bullet\bullet},Y_{\bullet\bullet})$. We will study these in more detail in Section \ref{quiverChainsSection}.
	\end{remark}
	\begin{figure}[H]
		\begin{center}
			\begin{tikzcd}[scale cd=1,row sep=large,column sep= large]
				&&&&\bullet_{(1,3)}\arrow[r,"\alpha_3"]&\bullet_{(2,3)}\arrow[r,"\beta_3"]&\bullet_{(3,3)}\\
				\bullet_1\arrow[r, shift left=1ex,"\alpha"]&\bullet_2\arrow[r, shift left=1ex,"\beta"]\arrow[l,magenta,shift left=1ex,"\overline{\alpha}"]&\bullet_3\arrow[l,magenta,shift left=1ex,"\overline{\beta}"]&\textcolor{blue}{\Longrightarrow}&\bullet_{(1,2)}\arrow[r,"\alpha_2"]&\bullet_{(2,2)}\arrow[r,"\beta_2"]\arrow[lu,magenta,"\overline{\alpha}_2",']&\bullet_{(3,2)}\arrow[lu,magenta,"\overline{\beta}_2",']\\
				&&&&\bullet_{(1,1)}\arrow[r,"\alpha_1"]&\bullet_{(2,1)}\arrow[r,"\beta_1"]\arrow[lu,magenta,"\overline{\alpha}_1",']&\bullet_{(3,1)}\arrow[lu,magenta,"\overline{\beta}_1",']
			\end{tikzcd}
		\end{center}
		\vspace{-2mm}
		\caption{\label{ExampleOfQEll}$Q^\ell$ for $Q=A_3$ and $\ell=3$}
	\end{figure}
	\begin{remark}\label{underlyingRepOfQSplits}
		If $\mathbf{V}$ is fixed by the $\mathbb{C}^*$-action then the previous theorem implies that $\texttt{Forget}(\mathbf{V})\cong (V_1,X_1)\oplus\ldots\oplus(V_\ell,X_\ell)$ where $V_k=(V_{i,k})_{i\in Q_0}$,  $X_k:=(X_{\alpha,k})_{\alpha\in Q_1}$ and \texttt{Forget} is the forgetful functor introduced in Section \ref{repsDoubleQuiverSection}. Notice, however, that neither $\texttt{Forget}(\mathbf{V})$ nor the direct summands appearing in the decomposition need to be (semi)stable as representations of $Q$.  
	\end{remark}
	The real moment map condition 
	$$
	\mu_\mathbb{R}(\mathbf{V})_i=\theta_{i}\text{Id}_{V_i}
	$$
	restricted to $V_{i,n}$, for $1\leq n \leq \ell$, gives
	\begin{align*}
		\mu_{\mathbb{R}}(\mathbf{V})_{i,n}&=
		\sum_{\substack{h\alpha=i\\\alpha\in Q_1}}X_{\alpha,n}X_{\alpha,n}^*+\sum_{\substack{t\alpha=i\\\alpha\in Q_1}}Y_{\overline{\alpha},n-1}Y_{\overline{\alpha},n-1}^* \\ &- \sum_{\substack{t\alpha=i\\\alpha\in Q_1}} X_{\alpha,n}^*X_{\alpha,n}- \sum_{\substack{h\alpha=i\\\alpha\in Q_1}}Y_{\overline{\alpha},n}^*Y_{\overline{\alpha},n}=\theta_{i}\text{Id}_{V_{i,n}}
	\end{align*}
	Summing over all $i\in Q_0$ and taking trace on both sides of the equality gives 
	$$
	\sum_{\alpha\in Q_1}(\|Y_{\overline{\alpha},n-1}\|^2-\|Y_{\overline{\alpha},n}\|^2)=\sum_{i\in Q_0}\theta_{i}d_{i,n}.
	$$
	Hence, at a critical point $[\mathbf{V}]\in \mathcal{M}^{\theta}(\overline{Q},\mathbf{d})$, the value of the moment map will be 
	$$
	F([\mathbf{V}])=\sum_{\alpha\in Q_1}\|Y_{\overline{\alpha}}\|^2=\sum_{\alpha\in Q_1}\sum_{n=1}^{\ell-1}\|Y_{\overline{\alpha},n}\|^2=\sum_{n=1}^{\ell-1}\sum_{k=1}^{n}\sum_{i\in Q_0}-\theta_{i}d_{i,k}.
	$$
	This implies the following:
	\begin{proposition}
		The moment map $F:\mathcal{M}^{\theta}(\overline{Q},\mathbf{d})\to \mathbb{R}$ has finitely many critical values. 
	\end{proposition}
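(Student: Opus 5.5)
The plan is to combine the explicit formula for $F$ at critical points, derived just before the statement, with the finiteness of the discrete data that can occur in it. By Theorem \ref{characterizationFixedPoints}, critical points of $F$ (the fixed points of the $\mathbb{S}^1$-action, which coincide with the $\mathbb{C}^*$-fixed points) are exactly the classes $[\mathbf{V}]$ admitting a weight decomposition $V_i=\bigoplus_{n=1}^\ell V_{i,n}$. For any such point, the computation above gives
\[
F([\mathbf{V}])=-\sum_{n=1}^{\ell-1}\sum_{k=1}^{n}\sum_{i\in Q_0}\theta_i d_{i,k},
\]
so the critical value depends only on $\theta$, on $\ell$, and on the chain dimension vector $\mathbf{d}^\ell=(d_{i,n})$ of Remark \ref{quiverChainsAreQuiverReps}. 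It therefore suffices to show that only finitely many pairs $(\ell,\mathbf{d}^\ell)$ can arise from $\theta$-stable fixed points.

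For the bound on $\ell$, I will use Remark \ref{noEmptyLevels}. Every level $n=1,\dots,\ell$ carries a nonzero total dimension $\sum_i d_{i,n}\ge 1$, and these level dimensions add up to $|\mathbf{d}|=\sum_i d_i$. Hence $\ell\le|\mathbf{d}|$. One bookkeeping point needs care. The weights coming from the one-parameter subgroup $g$ are integers $\lambda$, not necessarily the values $1,\dots,\ell$. I will normalise by shifting: if $g$ is replaced by $t^{-c}g$, where $t^{-c}$ is a central scalar, then $g$ still satisfies Equation (\ref{oneParameterSubgroupTorusaction}) and every weight drops by $c$. So I can take the minimal occurring weight to be $1$. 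The no-gap argument of Remark \ref{noEmptyLevels} then shows the occurring weights are consecutive, so they are exactly $1,\dots,\ell$.

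With $\ell\le|\mathbf{d}|$ fixed, the vector $\mathbf{d}^\ell$ consists of nonnegative integers $d_{i,n}$ satisfying $\sum_n d_{i,n}=d_i$. There are only finitely many such vectors, so the set of possible pairs $(\ell,\mathbf{d}^\ell)$ is finite, and therefore so is the set of critical values.

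The only genuinely delicate step is ensuring that the formula for $F$ is independent of the chosen representative and decomposition. Since the critical value is a number attached to the point $[\mathbf{V}]$, it is automatically well defined. The formula just expresses it through whatever admissible decomposition we choose, so finiteness of the set of possible right-hand sides suffices. I would also note that the formula relies on the real moment map condition applied to a representative in $\mu_{\mathbb{R}}^{-1}(\theta)$ that is compatible with the weight splitting. I will take this from the hyperkähler description, choosing a unitary representative and $g$ with values in $\operatorname{U}(\mathbf{d})$, so that the splitting is orthogonal. This is the step I would check most carefully.
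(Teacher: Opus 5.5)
Your proposal is correct and is essentially the paper's argument: the paper deduces the proposition directly from the displayed formula $F([\mathbf{V}])=-\sum_{n=1}^{\ell-1}\sum_{k=1}^{n}\sum_{i\in Q_0}\theta_i d_{i,k}$, which shows that a critical value depends only on the discrete data $(\ell,\mathbf{d}^\ell)$, and there are only finitely many such data. Your extra bookkeeping makes explicit steps the paper leaves implicit: normalising the weights by a central scalar, bounding $\ell\le|\mathbf{d}|$ via Remark \ref{noEmptyLevels}, and making the weight splitting orthogonal for a representative in $\mu_{\mathbb{R}}^{-1}(\theta)$ (this holds because for $|t|=1$ both $\mathbf{V}$ and $t\cdot\mathbf{V}$ lie in $\mu_{\mathbb{R}}^{-1}(\theta)$, which forces $g(t)\in\mathbb{C}^*\cdot\operatorname{U}(\mathbf{d})$ to be normal), the last being exactly what justifies restricting the real moment map equation to each $V_{i,n}$.
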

	
	\subsection{Existence of a universal family and liftings of the $\mathbb{C}^*$-action}\label{universalFamilyNakajimaQuiverVarietiesAndLiftingsTorusAction}
	Later on, the existence of a universal family will be helpful for obtaining some of the results. We will follow \cite[Section 5]{KingModuli} and specialize it to the situation at hand. 
	\begin{definition}
		A \emph{family of Nakajima quiver representations} over a scheme $S$ is given by a tuple $\mathcal{V}=((\mathcal{V}_i)_{i\in Q_0},(\mathcal{X}_{\alpha})_{\alpha\in Q_1},(\mathcal{Y_{\overline{\alpha}}})_{\alpha\in Q_1})$ where $\mathcal{V}_i$ is a locally free sheaf over $S$ and $\mathcal{X}_{\alpha}:\mathcal{V}_{t\alpha}\to \mathcal{V}_{h\alpha},\ \mathcal{Y}_{\overline\alpha}:\mathcal{V}_{h\alpha}\to \mathcal{V}_{t\alpha}$ are sheaf morphisms for each $\alpha\in Q_1$ such that 
		$$
		\sum_{\substack{h\alpha=i\\\alpha\in Q_1}}\mathcal{X}_\alpha\mathcal{Y}_{\overline{\alpha}}-\sum_{\substack{t\alpha=i\\\alpha\in Q_1}}\mathcal{Y}_{\overline{\alpha}}\mathcal{X}_\alpha\equiv 0
		$$
		for all $i\in Q_0$. Two such families \(\mathcal V\) and \(\mathcal V'\) are said to be
		\emph{equivalent} if there exist a line bundle \(\mathcal L\) on \(S\)
		and isomorphisms
		$$
		\varphi_i:\mathcal V_i'
		\xrightarrow{\cong}
		\mathcal V_i\otimes\mathcal L,
		$$
		for all $i\in Q_0$
		compatible with all the arrow maps. That is, for every
		$\alpha\in Q_1$, the diagrams
		$$
		\begin{tikzcd}[column sep=large,row sep=large]
			\mathcal V'_{t\alpha}
			\arrow[r,"\mathcal X'_\alpha"]
			\arrow[d,"\varphi_{t\alpha}"']
			&
			\mathcal V'_{h\alpha}
			\arrow[d,"\varphi_{h\alpha}"]
			\\
			\mathcal V_{t\alpha}\otimes\mathcal L
			\arrow[r,"\mathcal X_\alpha\otimes
			\operatorname{Id}_{\mathcal L}"']
			&
			\mathcal V_{h\alpha}\otimes\mathcal L
		\end{tikzcd}
		\text{and}
		\begin{tikzcd}[column sep=large,row sep=large]
			\mathcal V'_{h\alpha}
			\arrow[r,"\mathcal Y'_{\overline\alpha}"]
			\arrow[d,"\varphi_{h\alpha}"']
			&
			\mathcal V'_{t\alpha}
			\arrow[d,"\varphi_{t\alpha}"]
			\\
			\mathcal V_{h\alpha}\otimes\mathcal L
			\arrow[r,"\mathcal Y_{\overline\alpha}\otimes
			\operatorname{Id}_{\mathcal L}"']
			&
			\mathcal V_{t\alpha}\otimes\mathcal L
		\end{tikzcd}
		$$
		commute.
	\end{definition}
	We define the moduli functor
	\begin{equation}\label{moduliFunctorNakajimaQuiverRepresentations}
	\mathcal F_{\mathbf d}^{\theta}:(\operatorname{Sch}/\mathbb C)^{\operatorname{op}}\longrightarrow \operatorname{Sets}
	\end{equation}
	by
	$$
	\mathcal F_{\mathbf d}^{\theta}(S)
	=
	\left\{
	\begin{array}{c}
		\text{\(\theta\)-stable families of Nakajima quiver}\\
		\text{representations of dimension vector \(\mathbf d\) over \(S\)}
	\end{array}
	\right\}
	\Big/\sim.
	$$
	And, for a morphism $f:T\to S$, the corresponding map is defined by
	$$
	\begin{matrix}
			\mathcal F_{\mathbf d}^{\theta}(f):&	\mathcal F_{\mathbf d}^{\theta}(S)&\longrightarrow& \mathcal F_{\mathbf d}^{\theta}(T)\\
		&	[\mathcal V]&\longmapsto&[f^*\mathcal V]
	\end{matrix}
	$$
	which is well defined since
	$f^*(\mathcal V\otimes\mathcal L)\cong (f^*\mathcal V\otimes f^*\mathcal L)\sim f^*\mathcal{V}$.
	
	\begin{proposition}\label{existenceUniversalFamilyNakajimaQuiverVarieties}
		For indivisible dimension vector $\mathbf{d}$ and generic stability parameter $\theta$, $\mathcal{M}^{\theta}(\overline{Q},\mathbf{d})$ is a fine moduli space for families of stable Nakajima quiver representations and represents the functor $\mathcal{F}^{\theta}_{\mathbf{d}}$. 
	\end{proposition}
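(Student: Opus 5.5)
The plan is to follow King's argument \cite[Proposition 5.3]{KingModuli}, adapted to the closed $\operatorname{GL}(\mathbf d)$-invariant subscheme $Z:=\mu_{\mathbb C}^{-1}(0)^{\theta\text{-}s}\subseteq\operatorname{Rep}(\overline Q,\mathbf d)$. Over $Z$ there is a tautological family $\widetilde{\mathcal V}$: the trivial bundles $\mathcal O_Z\otimes\mathbb C^{d_i}$ with the tautological maps $\mathcal X_\alpha,\mathcal Y_{\overline\alpha}$. It satisfies the moment-map relation because $Z\subseteq\mu_{\mathbb C}^{-1}(0)$, and it is $\operatorname{GL}(\mathbf d)$-equivariant. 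The diagonal $\mathbb C^*\subseteq\operatorname{GL}(\mathbf d)$ acts on each fibre by scalars (Remark \ref{diagonalSubgroupsActingTrivially}), so $\widetilde{\mathcal V}$ does not descend directly. The goal is to twist it so that the stabiliser acts trivially, and then descend along the quotient map.

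\textbf{Step 1 (principal bundle).} Since $\theta$-stable points have only scalar automorphisms (Section \ref{GITDescriptionNakajimaQuiverVarieties}), $\operatorname{PGL}(\mathbf d)$ acts freely on $Z$. Luna's étale slice theorem, or King's GIT argument, then shows that $Z\to\mathcal M^\theta(\overline Q,\mathbf d)$ is a principal $\operatorname{PGL}(\mathbf d)$-bundle, locally trivial in the étale topology.

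\textbf{Step 2 (killing the scalar action).} Indivisibility of $\mathbf d$ gives integers $c_i$ with $\sum_i c_i d_i=1$. Let $\chi(g)=\prod_i\det(g_i)^{c_i}$. On the diagonal $\mathbb C^*$ this character is $t\mapsto t^{\sum c_id_i}=t$. Twist each $\widetilde{\mathcal V}_i$ by the inverse of the equivariant line bundle $\mathcal O_Z\cdot\chi$. The scalars then act trivially on $\widetilde{\mathcal V}_i\otimes\chi^{-1}$, so this is a $\operatorname{PGL}(\mathbf d)$-equivariant family. By descent along a principal bundle (Kempf's descent lemma, or faithfully flat descent), it descends to locally free sheaves $\mathcal U_i$ on $\mathcal M^\theta(\overline Q,\mathbf d)$, and the maps $\mathcal X_\alpha,\mathcal Y_{\overline\alpha}$ descend with them. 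The moment-map relation descends because it holds after a faithfully flat pullback, and the descended family is fibrewise $\theta$-stable. This defines a class $[\mathcal U]\in\mathcal F^\theta_{\mathbf d}(\mathcal M^\theta(\overline Q,\mathbf d))$.

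\textbf{Step 3 (representability).} Take a $\theta$-stable family $\mathcal V$ over $S$. Choosing local trivialisations of the $\mathcal V_i$ gives maps $S\supseteq U\to Z$. These are well defined up to $\operatorname{GL}(\mathbf d)$, so they glue to a classifying morphism $f:S\to\mathcal M^\theta(\overline Q,\mathbf d)$. Equivalently, the frame bundle of $\mathcal V$ modulo scalars is a $\operatorname{PGL}(\mathbf d)$-torsor $P$ over $S$ with an equivariant map $P\to Z$. Comparing $\mathcal V$ with $f^*\mathcal U$ yields isomorphisms $\mathcal V_i\cong f^*\mathcal U_i\otimes\mathcal L$ for a single line bundle $\mathcal L$, compatible with the arrows. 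The line bundle is common to all $i$ because automorphisms of stable representations are scalar, so the local comparison isomorphisms differ by a common $\mathcal O^*$-cocycle. This gives the equivalence $\mathcal V\sim f^*\mathcal U$ required by the definition. Uniqueness of $f$ is clear, since $f$ is determined pointwise by isomorphism classes and scheme-theoretically by the torsor $P$. Hence $\mathcal M^\theta(\overline Q,\mathbf d)$ represents $\mathcal F^\theta_{\mathbf d}$.

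The main obstacle is Step 1 together with the descent in Step 2: showing that the GIT quotient of the stable locus is a genuine principal $\operatorname{PGL}(\mathbf d)$-bundle rather than merely a geometric quotient, and that descent applies. I would first try to cite King's \cite[Proposition 5.3]{KingModuli} verbatim. His proof uses only freeness of the $\operatorname{PGL}$-action on stable points, Luna's slice theorem, and the character $\chi$. Since it applies to any closed invariant subscheme of the representation space, in particular $\mu^{-1}_{\mathbb C}(0)$, the argument should transfer without change.
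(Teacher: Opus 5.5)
Your proposal is correct and takes essentially the same route as the paper. Both follow King's construction: take the tautological family on $\mu_{\mathbb C}^{-1}(0)^{\theta\text{-}s}$, twist it by a character of weight $-1$ on the diagonal scalars (which exists precisely because $\mathbf d$ is indivisible), and descend it to $\mathcal M^\theta(\overline Q,\mathbf d)$. Your Step 1 (the principal $\operatorname{PGL}(\mathbf d)$-bundle structure) and Step 3 (the explicit representability and uniqueness check) make precise what the paper leaves implicit.
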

	\begin{proof}
		For all $i\in Q_0$, we let $\widetilde{\mathcal{V}_i}\to \mu^{-1}_{\mathbb{C}}(0)^{\theta\text{-}s}$ be the trivial bundle of rank $d_i$. For $g\in \text{GL}(\mathbf{d})$, $g$ acts on $(\mathbf{V},v)\in (\widetilde{\mathcal{V}}_i)_{\{\mathbf{V}\}}$ as $g\cdot (\mathbf{V},v)=(g\cdot\mathbf{V},g_iv)$. For each edge $\alpha\in Q_1$ we define the morphism $\widetilde{X}_{\alpha}:\widetilde{\mathcal{V}}_{t\alpha}\to \widetilde{\mathcal{V}}_{h\alpha}$ as $(\mathbf{V},v)\mapsto (\mathbf{V},X_{\alpha}(v))$ and similarly we define $\widetilde{Y}_{\overline\alpha}:\widetilde{\mathcal{V}}_{h\alpha}\to \widetilde{\mathcal{V}}_{t\alpha}$. Notice that these morphisms are $\text{GL}(\mathbf{d})$-equivariant. 
		\\
		\\
		Now, we recall that the stabilizer of each representation in $\mu^{-1}_{\mathbb{C}}(0)^{\theta\text{-}s}$ is just $\mathbb{C}^*$. So for the tautological family that we have just constructed to descend to $\mathcal{M}^{\theta}(\overline{Q},\mathbf{d})$ we need the stabilizer to act trivially in the fiber direction. We achieve this by twisting each bundle $\widetilde{\mathcal{V}}_i$ by the trivial line bundle on which $\text{GL}(\mathbf{d})$ acts via a character $\chi:\text{GL}(\mathbf{d})\to \mathbb{C}^*$ acting with weight $-1$ when restricted to $\mathbb{C}^*$. For such a character to exist is a necessary and sufficient condition for the dimension vector $\mathbf{d}$ to be indivisible. This construction gives the desired universal family over $\mathcal{M}^{\theta}(\overline{Q},\mathbf{d})$ which we denote $\mathbb{M}^{\theta}(\overline{Q},\mathbf{d})=((\mathbb{V}_i)_{i\in Q_0},(\mathbb{X}_{\alpha})_{\alpha\in Q_1},(\mathbb{Y}_{\overline\alpha})_{\alpha\in Q_1})$.
	\end{proof}
	We now turn to $\mathbb{C}^*$-equivariant structures on families taking inspiration from Hausel and Thaddeus' work on Higgs bundles \cite[(4.3) and (4.4)]{HauselThaddeus}: 
	\begin{definition}\label{equivariantStructures}
		Let $S$ be a scheme with a $\mathbb{C}^*$-action, and let
		\[
		\mathcal V=
		\left(
		(\mathcal V_i)_{i\in Q_0},
		(\mathcal X_\alpha)_{\alpha\in Q_1},
		(\mathcal Y_{\overline\alpha})_{\alpha\in Q_1}
		\right)
		\]
		be a family of Nakajima quiver representations over $S$. A
		\emph{$\mathbb{C}^*$-equivariant structure} on $\mathcal V$ is a collection of
		isomorphisms
		\[
		\rho_i(t,s):(\mathcal V_i)_s\longrightarrow (\mathcal V_i)_{t\cdot s},
		\qquad i\in Q_0,
		\]
		depending algebraically on $t\in\mathbb{C}^*$ and $s\in S$, satisfying: 
		\begin{itemize}
			\item The equivariant condition
			$$
			\rho_i(t_1t_2,s)
			=
			\rho_i(t_1,t_2\cdot s)\circ \rho_i(t_2,s)
			$$
			for all $t_1,t_2\in\mathbb C^*$ and $s\in S$.
			\item Compatibility with the arrow maps: for ordinary arrows $\alpha\in Q_1$, one has
			\begin{equation}\label{compatibilityEqStructureWithArrows}
				\mathcal X_{\alpha}\circ \rho_{t\alpha}(t,s)
				=
				\rho_{h\alpha}(t,s)\circ \mathcal X_{\alpha},
			\end{equation}
			whereas for opposite arrows $\overline\alpha$, one has
			\begin{equation}\label{compatibilityEqStructureWithOppositeArrows}
				\mathcal Y_{\overline\alpha}\circ \rho_{h\alpha}(t,s)
				=
				t\rho_{t\alpha}(t,s)\circ \mathcal Y_{\overline\alpha}.
			\end{equation}
			Thus ordinary arrows have weight $0$, while opposite arrows have weight $1$.
		\end{itemize}
		
	\end{definition}
	\begin{lemma}\label{twoEquivariantStructuresDifferByALineBundle}
		Let $\mathcal V$
		be a family of stable Nakajima quiver representations over a base
		scheme $S$ with $\mathbb{C}^*$-action. Suppose that \(\rho\) and \(\rho'\) are two
		$\mathbb C^*$-equivariant structures on
		$\mathcal V$, then $\rho$ and $\rho'$ differ
		by tensoring with a $\mathbb C^*$-linearized trivial line bundle.
	\end{lemma}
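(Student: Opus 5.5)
The idea is to compare the two structures fibrewise and show that their ratio is a scalar. For $t\in\mathbb C^*$ and $s\in S$ set
\[
\phi_i(t,s):=\rho_i'(t,s)^{-1}\circ\rho_i(t,s):(\mathcal V_i)_s\longrightarrow(\mathcal V_i)_s,\qquad i\in Q_0 .
\]
This depends algebraically on $(t,s)$. Since both $\rho$ and $\rho'$ satisfy (\ref{compatibilityEqStructureWithArrows}) and (\ref{compatibilityEqStructureWithOppositeArrows}), the factors of $t$ cancel in the opposite-arrow relation. Hence $(\phi_i(t,s))_{i\in Q_0}$ commutes with all $\mathcal X_\alpha$ and $\mathcal Y_{\overline\alpha}$ at $s$, i.e. it is an automorphism of the stable Nakajima representation $\mathcal V_s$.

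By stability, $\operatorname{End}(\mathcal V_s)=\mathbb C$, as in the proof of Proposition \ref{cohomologiesDeformationComplexNakajimaQuiverVarieties}. So there is a unique $c(t,s)\in\mathbb C^*$ with $\phi_i(t,s)=c(t,s)\operatorname{Id}$ for every $i$. The map $c:\mathbb C^*\times S\to\mathbb C^*$ is regular, since it can be read off as a ratio of matrix entries of $\phi_i$ in a local trivialisation at any vertex with $d_i>0$. Equivalently, $c$ is a nowhere-vanishing function on $\mathbb C^*\times S$.

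Next I will derive the cocycle condition. Using the equivariance identity for both $\rho$ and $\rho'$, I compute $\rho_i(t_1t_2,s)=\rho_i(t_1,t_2s)\rho_i(t_2,s)$. Substituting $\rho=\rho'\circ c$ and using that scalars commute with the linear maps $\rho'$ gives
\[
c(t_1t_2,s)=c(t_1,t_2\cdot s)\,c(t_2,s).
\]
This is exactly the condition for the trivial line bundle $\mathcal O_S$, with $t$ acting by $(s,z)\mapsto(t\cdot s,\,c(t,s)z)$, to be a $\mathbb C^*$-linearization. Call it $\mathcal L_c$. Then $\rho=\rho'\otimes\mathcal L_c$ under the canonical identification $\mathcal V_i\otimes\mathcal O_S\cong\mathcal V_i$, which is the claim.

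The main obstacle is the second step: passing from the pointwise statement that each $\phi_i(t,s)$ is a scalar to regularity of $c$ on possibly non-reduced $S$. There I would argue scheme-theoretically. The endomorphism sheaf of a family of stable representations is $\mathcal O_S$, by base change for the kernel of $\delta_0$, whose fibres have constant dimension $1$. Pulling back to $\mathbb C^*\times S$ then shows that $\phi$ is multiplication by a unit $c$.
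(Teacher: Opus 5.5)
Your proposal is correct and is essentially the paper's proof: take the fibrewise ratio of the two structures, use that a stable representation has only scalar automorphisms to get one scalar $c(t,s)$ for all vertices, and observe that the identity $c(t_1t_2,s)=c(t_1,t_2\cdot s)\,c(t_2,s)$ makes $c$ a $\mathbb{C}^*$-linearization of $\mathcal O_S$. Using $(\rho')^{-1}\circ\rho$ where the paper uses $\rho^{-1}\circ\rho'$ only swaps which structure gets twisted. You also supply details the paper merely asserts: the cancellation of the factor $t$ on opposite arrows, and the regularity of $c$. For non-reduced $S$, the cleanest justification of regularity is that $\delta_0$ modulo scalars is injective on every fibre, hence locally split injective; constant fibre dimension of the kernel alone does not suffice.
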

	\begin{proof}
		We compare both equivariant structures by setting
		\[
		g_i(t,s):=\rho_i(t,s)^{-1}\circ \rho_i'(t,s):
		(\mathcal V_i)_{\{s\}}\longrightarrow(\mathcal V_i)_{\{s\}}.
		\]
		Then $(g_i(t,s))_{i\in Q_0}$ defines an automorphism of the stable
		representation $\mathcal V_s$ and therefore there exists a scalar
		$c(t,s)\in\mathbb C^*$ depending regularly on $t$ and $s$ and independent of $i\in Q_0$, such that
		$$
		g_i(t,s)=c(t,s)\operatorname{Id}_{(\mathcal V_i)_{\{s\}}}.
		$$
		Notice that
		$$
		c(t_1t_2,s)=c(t_1,t_2\cdot s)c(t_2,s)
		$$
		for all $t_1,t_2\in\mathbb C^*$ and $s\in S$. Therefore $c$ defines a
		$\mathbb C^*$-linearization of the trivial line bundle $\mathscr O_S$. Tensoring every vertex bundle $\mathcal V_i$ with this linearized trivial line
		bundle changes the equivariant structure $\rho$ into $\rho'$.
	\end{proof}
	
	\begin{proposition}\label{equivariantStructureUniversalFamily}
		There is a $\mathbb{C}^*$-equivariant structure on the universal family $\mathbb{M}^{\theta}(\overline{Q},\mathbf{d})$ acting with weight 0 on ordinary arrows
		$\alpha\in Q_1$ and weight one on opposite arrows $\overline\alpha$. 
	\end{proposition}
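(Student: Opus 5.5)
The plan is to construct the equivariant structure upstairs on the tautological family over $\mu^{-1}_{\mathbb{C}}(0)^{\theta\text{-}s}$ and then descend it, exactly as in the proof of Proposition \ref{existenceUniversalFamilyNakajimaQuiverVarieties}. Alternatively, and more conceptually, we use the universal property. For $t\in\mathbb{C}^*$ let $m_t:\mathcal{M}^{\theta}(\overline{Q},\mathbf{d})\to\mathcal{M}^{\theta}(\overline{Q},\mathbf{d})$ denote the action. The family $m_t^*\mathbb{M}^{\theta}(\overline{Q},\mathbf{d})$ and the family $t\cdot\mathbb{M}:=((\mathbb{V}_i),(\mathbb{X}_\alpha),(t\mathbb{Y}_{\overline\alpha}))$ define the same point of $\mathcal{F}^{\theta}_{\mathbf d}(\mathcal{M}^\theta(\overline Q,\mathbf d))$. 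The latter is still a stable family, since scaling the $Y$'s preserves both the moment map relation and stability. Hence these two families are equivalent, and we want to make this equivalence algebraic in $t$.

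To do this uniformly, we work over $\mathbb{C}^*\times\mathcal{M}^{\theta}(\overline{Q},\mathbf{d})$. We compare the pullback of $\mathbb{M}$ along the action map $a:(t,s)\mapsto t\cdot s$ with the pullback along the projection $p_2$, with the $Y$'s of the latter multiplied by the coordinate function $t$. Both are $\theta$-stable families over this base classifying the same morphism $a$ to the moduli space. By fineness (Proposition \ref{existenceUniversalFamilyNakajimaQuiverVarieties}) there are a line bundle $\mathcal{L}$ on $\mathbb{C}^*\times\mathcal{M}$ and isomorphisms $\Phi_i:p_2^*\mathbb{V}_i\otimes\mathcal{L}\to a^*\mathbb{V}_i$ intertwining $\mathbb X_\alpha$ with $\mathbb X_\alpha$ and $t\mathbb Y_{\overline\alpha}$ with $\mathbb Y_{\overline\alpha}$. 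This yields fiberwise maps $\rho_i(t,s)$ up to the choice of a trivialization of $\mathcal{L}$ along fibers. These maps satisfy the compatibilities (\ref{compatibilityEqStructureWithArrows}) and (\ref{compatibilityEqStructureWithOppositeArrows}).

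The main obstacle is twofold: trivializing $\mathcal{L}$, and arranging the cocycle condition $\rho_i(t_1t_2,s)=\rho_i(t_1,t_2s)\rho_i(t_2,s)$. Both are cleanest upstairs. On $\mu^{-1}_{\mathbb C}(0)^{\theta\text{-}s}$ the $\mathbb{C}^*$-action $t\cdot(X,Y)=(X,tY)$ commutes with $\operatorname{GL}(\mathbf{d})$. We therefore define $\tilde\rho_i(t,\mathbf V):(\mathbf V,v)\mapsto(t\cdot\mathbf V,v)$ on the trivial bundles $\widetilde{\mathcal V}_i$. This is manifestly a group action and commutes with the $\operatorname{GL}(\mathbf d)$-action, including the twist by the character $\chi$, since $\chi$ only involves $\operatorname{GL}(\mathbf d)$. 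Checking it against the twisted arrow maps gives $\widetilde X_\alpha\tilde\rho=\tilde\rho\widetilde X_\alpha$ and $\widetilde Y_{\overline\alpha}$ at the point $t\cdot\mathbf V$ equal to $tY_{\overline\alpha}$. This is exactly condition (\ref{compatibilityEqStructureWithOppositeArrows}) up to orientation of the convention. If the sign of the weight comes out inverted, we replace $t$ by $t^{-1}$ or tensor with the linearization of weight one on the trivial line bundle, which is permitted by Lemma \ref{twoEquivariantStructuresDifferByALineBundle}. Because $\tilde\rho$ is $\operatorname{GL}(\mathbf d)$-equivariant and the stabilizer $\mathbb{C}^*$ acts trivially on the twisted bundles, $\tilde\rho$ descends, by the same descent argument used for the universal family, to a $\mathbb{C}^*$-equivariant structure on $\mathbb{M}^{\theta}(\overline{Q},\mathbf{d})$ with the required weights.
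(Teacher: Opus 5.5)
Your final argument is exactly the paper's proof: you lift the action to the tautological bundles $\widetilde{\mathcal V}_i$ by $(\mathbf V,v)\mapsto(t\cdot\mathbf V,v)$, check that $\widetilde{\mathcal X}_\alpha$ is preserved and $\widetilde{\mathcal Y}_{\overline\alpha}$ picks up the factor $t$, and descend because the lift commutes with the $\chi$-twisted $\operatorname{GL}(\mathbf d)$-action; the detour through the universal property is not needed. The sign hedge is also unnecessary, since the direct check gives weight exactly one in the convention of Definition \ref{equivariantStructures}, and its fallback would not have worked anyway: tensoring all vertex bundles by a single linearized line bundle leaves the arrow weights unchanged, and replacing $t$ by $t^{-1}$ alters the action on the base.
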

	\begin{proof}
		We keep the notation from Proposition \ref{existenceUniversalFamilyNakajimaQuiverVarieties}. Define a $\mathbb C^*$-action on each tautological vertex bundle $\widetilde{\mathcal{V}}_i$ by
		$$
		t\cdot(\mathbf{V},v):=(t\cdot \mathbf{V},v)
		$$
		and notice that this covers the $\mathbb C^*$-action on $\mu_{\mathbb{C}}^{-1}(0)^{\theta\text{-}s}$. We now see how this $\mathbb{C}^*$-action interacts with the tautological arrow maps. Let
		$\alpha$ be an ordinary arrow, then
		\begin{equation}\label{equivarianceArrowsBeforeDescending}
			\widetilde{\mathcal{X}}_\alpha(t\cdot(\mathbf{V},v))=(t\cdot\mathbf{V},X_{\alpha}(v))=t\cdot(\mathbf{V},X_{\alpha}(v))=t\cdot \widetilde{\mathcal{X}}_{\alpha}(\mathbf{V},v).
		\end{equation}
		
		For an opposite arrow $\overline{\alpha}$ we get
		\begin{equation}\label{equivarianceInverseArrowsBeforeDescending}
			\widetilde{\mathcal{Y}}_{\overline\alpha}(t\cdot(\mathbf{V},v))= (t\cdot \mathbf{V},tY_{\overline{\alpha}}(v))
			=t(t\cdot\widetilde{\mathcal{Y}}_{\overline{\alpha}}(\mathbf{V},v)).
		\end{equation}
		It remains to see that this \(\mathbb C^*\)-equivariant structure descends
		to the universal family $\mathbb{M}^{\theta}(\overline{Q},\mathbf{d})$. Recall, from Proposition \ref{existenceUniversalFamilyNakajimaQuiverVarieties}, that the $\text{GL}(\mathbf{d})$-action on $\widetilde{\mathcal{V}}_i$ is given by
		$$
		g\cdot(\mathbf{V},v)=(g\cdot \mathbf{V},\chi(g)g_i v)
		$$
		for some character $\chi$. Then
		$$
		t\cdot(g\cdot(\mathbf{V},v))=t\cdot(g\cdot \mathbf{V},\chi(g)g_i v)=
		(t\cdot(g\cdot \mathbf{V}),\chi(g)g_i v),
		$$
		while
		$$
		g\cdot(t\cdot(\mathbf{V},v))=
		g\cdot(t\cdot \mathbf{V},v)=
		(g\cdot(t\cdot \mathbf{V}),\chi(g)g_i v).
		$$
		Since the $\text{GL}(\mathbf{d})$-action on $\mu^{-1}_{\mathbb{C}}(0)^{\theta\text{-}s}$ commutes with the
		$\mathbb C^*$-action, this descends to the universal family. 
		Consequently, for every $t\in\mathbb{C}^*$ and every
		$[\mathbf{V}]\in \mathcal{M}^{\theta}(\overline{Q},\mathbf{d})$, we obtain isomorphisms
		$$
		\rho_i(t):
		(\mathbb{V}_i)_{\{[\mathbf{V}]\}}
		\to
		(\mathbb{V}_i)_{\{t\cdot[\mathbf{V}]\}}
		$$
		for all $i\in Q_0$. The relations in Equations (\ref{equivarianceArrowsBeforeDescending}) and (\ref{equivarianceInverseArrowsBeforeDescending}) now become respectively:
		$$
		\mathbb{X}_{\alpha}|_{t\cdot[\mathbf{V}]}
		\circ \rho_{t\alpha}(t)=\rho_{h\alpha}(t)\circ \mathbb{X}_{\alpha}|_{[\mathbf{V}]},
		$$
		and
		$$
		\mathbb{Y}_{\overline\alpha}|_{t\cdot[\mathbf{V}]}
		\circ \rho_{h\alpha}(t)=
		t\ 
		\rho_{t\alpha}(t)
		\circ \mathbb{Y}_{\overline\alpha}|_{[\mathbf{V}]}.
		$$
		Thus the universal family carries the desired
		\(\mathbb C^*\)-equivariant structure.
	\end{proof}
	\subsection{More on quiver chains and their moduli}\label{quiverChainsSection}
	As we pointed out in Remark \ref{quiverChainsAreQuiverReps}, fixed points for the $\mathbb{C}^*$-action on $\mathcal{M}^{\theta}(\overline{Q},\mathbf{d})$ can be seen as quiver chains which are representations with relations of a new quiver, $Q^\ell$, which depends on a positive integer $\ell$ and the data of the original quiver $Q$. The goal of this section is to show that the classification problem of the fixed points is equivalent to that of quiver chains. First, we define the collapsing morphism which will play an important role in the section:
	$$
	\resizebox{\textwidth}{!}{$
	\begin{matrix}
	\operatorname{Rep}(Q^\ell,\mathbf{d}^\ell)&\overset{\tilde{\iota}_{\mathbf{d}^\ell}}{\longrightarrow}&\operatorname{Rep}(\overline{Q},\mathbf{d}) \\
		(V_{\bullet\bullet},X_{\bullet\bullet},Y_{\bullet\bullet}) & \longmapsto& \left ((V_i:=\bigoplus\limits_{k=1}^\ell V_{k,i})_{i\in Q_0},(X_\alpha:=\bigoplus\limits_{n=1}^{\ell}X_{\alpha,n}:V_{t\alpha}\to V_{h\alpha},Y_{\overline\alpha}:=
		\bigoplus\limits_{n=1}^{\ell-1}Y_{\overline\alpha,n}:V_{h\alpha}\rightarrow V_{t\alpha})_{\alpha\in Q_1}\right). 
	\end{matrix} $}
	$$
	\begin{proposition}\label{semistableRepresentationIffSemistableChain}
		A quiver chain $\mathcal{C}=(V_{\bullet\bullet},X_{\bullet\bullet},Y_{\bullet\bullet})$ is $\theta^{\ell}=(\theta_{i,n})_{(i,n)\in Q_0^\ell}$-(semi)stable if and only if the Nakajima quiver representation $\mathbf{V}=\tilde{\iota}_{\mathbf{d}^\ell}(\mathcal{C})$ obtained by collapsing is $\theta$-(semi)stable. Here $\theta_{i,n}=\theta_i$ for all $i\in Q_0$ and $n=1,\ldots,\ell$.
	\end{proposition}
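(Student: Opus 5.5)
The plan is to compare subobjects on both sides, using that the relevant slope inequalities only depend on the dimension vectors through $\theta\cdot\mathbf{d}_{\mathbf{W}}$. First I would record the numerical identity
\[
\theta^\ell\cdot\mathbf{d}^\ell_{\mathcal{W}}=\sum_{i\in Q_0}\sum_{n=1}^\ell\theta_i\, d_{i,n}(\mathcal{W})=\theta\cdot\mathbf{d}_{\tilde\iota(\mathcal{W})}
\qquad\text{and}\qquad |\mathbf{d}^\ell_{\mathcal{W}}|=|\mathbf{d}_{\tilde\iota(\mathcal{W})}|,
\]
valid for any representation $\mathcal{W}$ of $Q^\ell$. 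Applied to $\mathcal{C}$ itself, this gives $\theta^\ell\cdot\mathbf{d}^\ell=\theta\cdot\mathbf{d}=0$. Hence on both sides (semi)stability amounts to requiring $\theta\cdot\mathbf{d}_{\mathbf{W}}(\leq)<0$ for every proper nonzero subobject, and slopes of corresponding subobjects agree.

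For the direction "$\mathbf{V}$ (semi)stable $\Rightarrow$ $\mathcal{C}$ (semi)stable", I would take a proper nonzero subchain $\mathcal{W}=(W_{i,n})\subseteq\mathcal{C}$. Its collapse $\tilde\iota(\mathcal{W})$, with $W_i=\bigoplus_n W_{i,n}$, is a subrepresentation of $\mathbf{V}$. Indeed, $X_\alpha$ maps $W_{t\alpha,n}$ into $W_{h\alpha,n}$ and $Y_{\overline\alpha}$ maps $W_{h\alpha,n}$ into $W_{t\alpha,n+1}$. This subrepresentation is proper and nonzero and has the same $\theta$-value, so the inequality transfers immediately.

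The converse is the substantive direction, since a subrepresentation $\mathbf{W}\subseteq\mathbf{V}$ need not be compatible with the grading $V_i=\bigoplus_n V_{i,n}$. I would replace $\mathbf{W}$ by its associated graded with respect to the level filtration $F_{\geq n}V_i:=\bigoplus_{k\geq n}V_{i,k}$. This is the limit $\lim_{t\to 0}g(t)\cdot\mathbf{W}$ in the Grassmannian, where $g$ is the one-parameter subgroup of Theorem \ref{characterizationFixedPoints}. Concretely, set
\[
W'_{i,n}:=\operatorname{pr}_{n}\bigl(W_i\cap F_{\geq n}V_i\bigr)\subseteq V_{i,n},
\]
where $\operatorname{pr}_n$ is the projection to level $n$. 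Then $W'_{i,n}\cong (W_i\cap F_{\geq n})/(W_i\cap F_{\geq n+1})$, and hence $\sum_n\dim W'_{i,n}=\dim W_i$ for each $i$. It follows that $\mathcal{W}':=(W'_{i,n})$ is proper and nonzero whenever $\mathbf{W}$ is, and that $\theta^\ell\cdot\mathbf{d}^\ell_{\mathcal{W}'}=\theta\cdot\mathbf{d}_{\mathbf{W}}$.

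The main step to check carefully is that $\mathcal{W}'$ is a subchain. Take $w\in W_{h\alpha}\cap F_{\geq n}$ with leading term $w_n=\operatorname{pr}_n(w)$. Since $X_\alpha$ preserves levels, $X_\alpha w\in W\cap F_{\geq n}$ has leading term $X_{\alpha,n}w_n$. Since $Y_{\overline\alpha}$ raises the level by exactly one, $Y_{\overline\alpha}w\in W\cap F_{\geq n+1}$ has level-$(n+1)$ component $Y_{\overline\alpha,n}w_n$. Thus $X_{\alpha,n}(W'_{t\alpha,n})\subseteq W'_{h\alpha,n}$ and $Y_{\overline\alpha,n}(W'_{h\alpha,n})\subseteq W'_{t\alpha,n+1}$, as required. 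The choice of the increasing-level filtration $F_{\geq n}$, rather than $F_{\leq n}$, is exactly what makes the $Y$-arrows compatible.

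Applying (semi)stability of $\mathcal{C}$ to $\mathcal{W}'$ then yields $\theta\cdot\mathbf{d}_{\mathbf{W}}(\leq)<0$, which proves that $\mathbf{V}$ is $\theta$-(semi)stable.
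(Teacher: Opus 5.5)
Your proof is correct and follows essentially the same route as the paper. The paper's recursively defined $\mathbf{W}^{\geq n}=\ker(\pi_{n-1}|_{\mathbf{W}^{\geq n-1}})$ and $\mathbf{W}^n=\operatorname{im}(\pi_n|_{\mathbf{W}^{\geq n}})$ are exactly your $W\cap F_{\geq n}V$ and $W'_{\bullet,n}=\operatorname{pr}_n(W\cap F_{\geq n}V)$, and your easy direction is identical to the paper's. Your explicit check that the arrows preserve $\mathcal{W}'$, together with the telescoping dimension count, spells out what the paper only asserts; the one fix is that in the $X_\alpha$ check you should take $w\in W_{t\alpha}\cap F_{\geq n}$ rather than $w\in W_{h\alpha}\cap F_{\geq n}$.
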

	\begin{proof}
		We follow a similar strategy to that of Álvarez-Cónsul and García-Prada in the context of Higgs bundles \cite[Proposition 3.5]{AlvarezCGarciaPDimensionalReduction}.  Suppose that $\mathbf{V}$ is $\theta$-(semi)stable and let $\mathcal{C}'$ be a subchain of $\mathcal{C}$. The representation $\mathbf{W}$, of the doubled quiver $\overline{Q}$, associated to the subchain $\mathcal{C}'$ and obtained by collapsing the vertices is a subrepresentation of $\mathbf{V}$, then $\theta\cdot\mathbf{d}_{\mathbf{W}}(\leq)<0$ by the (semi)stability of $\mathbf{V}$.
		\\
		\\
		Conversely, assume that the quiver chain $\mathcal{C}$ is $\theta^\ell$-(semi)stable We have to show that $\mathbf{V}$ is $\theta$-(semi)stable or, in other words, that $\theta\cdot\mathbf{d}_{\mathbf{W}}(\leq)< 0$ for all subrepresentation $\mathbf{W}$ of $\mathbf{V}$. Recall from Remark \ref{underlyingRepOfQSplits} that $\texttt{Forget}(\mathbf{V})=(V_1,X_1)\oplus\cdots\oplus (V_\ell,X_\ell)$ so let us denote by $\pi_n:\texttt{Forget}(\mathbf{V})\to (V_n,X_n)$ the canonical projection which is a morphism of representations of the quiver $Q$. We define recursively, for $n=1,\ldots,\ell$,
		$$
		\mathbf{W}^{\geq n}=\begin{cases}
			\texttt{Forget}(\mathbf{W}) & \text{if }n=1, \\
			\ker(\pi_{n-1}|_{\mathbf{W}^{\geq n-1}}) & \text{if }2\leq n\leq \ell\\
		\end{cases}, \ \mathbf{W}^n=\text{im}(\pi_n|_{\mathbf{W}^{\geq n}}).
		$$
		There is, therefore, for all $n=1,\ldots,\ell$, an exact sequence of representations of the quiver $Q$:
		\begin{center}
			\begin{tikzcd}[column sep =normal, row sep =normal]
				0\arrow[r]&\mathbf{W}^{\geq n+1}\arrow[r,""]& \mathbf{W}^{\geq n} \arrow[r] &\mathbf{W}^{n}\arrow[r]&0.
			\end{tikzcd}
		\end{center}
		Now, note that the representations $\mathbf{W}^n$ together with the morphisms $Y_{\overline{\alpha}}$ define a quiver subchain $\mathcal{C}'$ of $\mathcal{C}$. Thus, by the $\theta^\ell$-(semi)stability of $\mathcal{C}$ we have that 
		$$
		\theta^\ell\cdot \dim(\mathcal{C}')(\leq)<0.
		$$ 
		But, 
		$$
		\theta^\ell\cdot\dim(\mathcal{C}')=\sum_{n=1}^\ell\sum_{i\in Q_0}\theta_{i,n}\dim(W^n_i)=\sum_{i\in Q_0}\theta_i\dim(W_i)=\theta\cdot\mathbf{d}_{\mathbf{W}}
		$$
		hence the conclusion. 
	\end{proof}
	\begin{remark}
		An alternative proof, allowing to conclude polystability rather than semistability, can be given using the real moment map condition \cite[Proposition 4.9]{NumpaqueTensorProducts}. Indeed, in op. cit. it is shown analytically that the quiver representation obtained by collapsing any number of vertices on a polystable representation is polystable as long as the corresponding stability parameters of the collapsed vertices agree. In a sense Proposition \ref{semistableRepresentationIffSemistableChain} is an algebraic proof of this fact, as the representation $\mathbf{V}$ of $\overline{Q}$ is obtained from the quiver chain $\mathcal{C}$ from collapsing all the vertices $i_n$, for $n=1,\ldots,\ell$, into one. 
	\end{remark}
	Following Section \ref{representationsWIthRelations} and Remark \ref{quiverChainsAreQuiverReps} , for a positive integer $\ell$, we define the \emph{moduli space of quiver chains} to be
	$$
	\mathcal{M}^{\theta^\ell}(Q^\ell,\mathbf{d}^\ell,\mathcal{R}^\ell)= \operatorname{Rep}(Q^\ell,\mathbf{d}^\ell,\mathcal{R}^\ell)^{\theta^\ell\text{-}s}\sslash\operatorname{GL}(\mathbf{d^\ell}).
	$$
	Since $\mathbf{d}$ is indivisible and $\theta$ generic, $\mathbf{d}^\ell$ will be indivisible and $\theta^\ell$ generic for this dimension vector. This means that semistability coincides with stability and that this moduli space admits a universal family that represents the functor $
	\mathcal F_{\mathbf d^\ell}^{\theta^\ell}:(\operatorname{Sch}/\mathbb C)^{\operatorname{op}}
	\rightarrow \operatorname{Sets}$ classifying equivalence classes of families of $\theta^\ell$-stable quiver chains of dimension vector $\mathbf{d}^\ell$. This is done in a similar fashion to what we did for Nakajima quiver varieties in the beginning of Section \ref{universalFamilyNakajimaQuiverVarietiesAndLiftingsTorusAction}. 

	\begin{theorem}\label{quiverChainsClosedImmersion}
		Let 
	$$
				\resizebox{\textwidth}{!}{$\displaystyle
					\mathscr{D}_\theta(\mathbf{d})=\{\mathbf{d}^\ell \ |\ \sum_{n=1}^{\ell}d_{i,n}=d_i\ \forall i\in Q_0,\ \sum_{i\in Q_0}d_{i,n}>0\ \forall n=1,\ldots,\ell{\normalfont \text{ and }}\mathcal{M}^{\theta^\ell}(Q^\ell,\mathbf{d}^\ell,\mathcal{R}^\ell)\neq\varnothing \}.$}
	$$
	Denote $Y:=\mathcal M^\theta(\overline Q,\mathbf d)$
		and
		$X_{\mathbf d^\ell}:=\mathcal M^{\theta^\ell}(Q^\ell,\mathbf d^\ell,\mathcal R^\ell)$. Then the collapsing morphisms $\tilde{\iota}_{\mathbf{d}^\ell}$ induce closed immersions
		$\iota_{\mathbf d^\ell}:X_{\mathbf d^\ell}\rightarrow Y$ with image contained in $Y^{\mathbb{C}^*}$. These morphisms, in turn, give an isomorphism 
		$$
		Y^{\mathbb{C}^*}\cong\bigsqcup_{\mathbf{d}^\ell\in \mathscr{D}_{\theta}(\mathbf{d})}X_{\mathbf{d}^\ell}.
		$$
	\end{theorem}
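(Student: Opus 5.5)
The plan is to build $\iota_{\mathbf d^\ell}$ from the universal families, prove it is injective on points and on tangent spaces, and then show that the images partition $Y^{\mathbb C^*}$.

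\emph{Construction of the morphism.} Let $\mathbb{M}_{\mathbf d^\ell}$ be the universal family over $X_{\mathbf d^\ell}$. Collapsing it fiberwise, $\mathbb V_i:=\bigoplus_n \mathbb V_{i,n}$, with the block maps $\bigoplus_n\mathbb X_{\alpha,n}$ and $\bigoplus_n\mathbb Y_{\overline\alpha,n}$, gives a family of Nakajima quiver representations over $X_{\mathbf d^\ell}$. The relations $\mathcal R^\ell$ become exactly $\mu_{\mathbb C}=0$. By Proposition \ref{semistableRepresentationIffSemistableChain}, this family is $\theta$-stable. Proposition \ref{existenceUniversalFamilyNakajimaQuiverVarieties} then produces a classifying morphism $\iota_{\mathbf d^\ell}:X_{\mathbf d^\ell}\to Y$ induced by $\tilde\iota_{\mathbf d^\ell}$. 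Its image lies in $Y^{\mathbb C^*}$ by the converse half of Theorem \ref{characterizationFixedPoints}: the automorphisms $t^n\mathrm{Id}$ on $V_{i,n}$ identify $t\cdot\mathbf V$ with $\mathbf V$.

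\emph{Injectivity and closedness.} Let $\mathcal C,\mathcal C'$ be two stable chains whose collapses are isomorphic via some $g$. The grading automorphisms $g_{\mathcal C}(t)$ and $g_{\mathcal C'}(t)$ both realise $t\cdot\mathbf V\cong\mathbf V$. Since stable representations have only scalar automorphisms, $g\,g_{\mathcal C}(t)\,g^{-1}=c(t)\,g_{\mathcal C'}(t)$ for a character $c(t)=t^m$. Hence $g$ maps the weight decomposition of $\mathcal C$ to that of $\mathcal C'$ up to the shift $m$. The normalisation that levels run over $1,\dots,\ell$ with no empty level (Remark \ref{noEmptyLevels}) forces $m=0$, so $\mathbf d^\ell$ is determined and $\mathcal C\cong\mathcal C'$. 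The same argument shows that distinct $\mathbf d^\ell\in\mathscr D_\theta(\mathbf d)$ have disjoint images. Surjectivity onto $Y^{\mathbb C^*}$ is Theorem \ref{characterizationFixedPoints}: after shifting weights to start at $1$, each fixed point is the collapse of a chain, which is stable by Proposition \ref{semistableRepresentationIffSemistableChain}.

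\emph{Immersion.} For injectivity on tangent spaces, I would compare the deformation complex of $\mathcal C$ with the weight-$0$ part of $\overline{\mathcal Hom}^\bullet(\mathbf V)$ under the $\mathbb C^*$-grading induced by $g(t)$. Give $\mathfrak{gl}$-type terms the weight of $\mathrm{Hom}(V_{i,n},V_{j,m})$ and shift the opposite-arrow summands by $1$. The weight-$0$ subcomplex of $\overline{\mathcal Hom}^\bullet(\mathbf V)$ is then precisely the chain complex $\mathfrak{gl}(\mathbf d^\ell)\to \mathrm{Rep}(Q^\ell,\mathbf d^\ell)\to \bigoplus_{i,n}\mathrm{Hom}(V_{i,n},V_{i,n+1})$, whose $H^1$ is the Zariski tangent space of $X_{\mathbf d^\ell}$. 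Its $H^1$ injects into $H^1(\overline{\mathcal Hom}^\bullet(\mathbf V))=T_{[\mathbf V]}Y$ because the complex splits as a direct sum of weight pieces. Moreover $T(Y^{\mathbb C^*})=H^1$ of weight $0$, so $\iota_{\mathbf d^\ell}$ identifies $T X_{\mathbf d^\ell}$ with $T Y^{\mathbb C^*}$. Since $Y$ is smooth, $Y^{\mathbb C^*}$ is smooth. Hence $\iota_{\mathbf d^\ell}$ is a bijective morphism onto a union of components of $Y^{\mathbb C^*}$ that is an isomorphism on tangent spaces. If I can show the image is open and closed in $Y^{\mathbb C^*}$, then $\iota_{\mathbf d^\ell}$ is an isomorphism onto it by Zariski's main theorem, and therefore a closed immersion into $Y$ since $Y^{\mathbb C^*}$ is closed.

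\emph{Main obstacle.} The hard step is showing that the locus of fixed points of type $\mathbf d^\ell$ is open and closed in $Y^{\mathbb C^*}$. My approach is to restrict the equivariant structure of Proposition \ref{equivariantStructureUniversalFamily} to $Y^{\mathbb C^*}$. There $\rho_i(t)$ acts fiberwise on $\mathbb V_i$ and splits it into weight subbundles. Lemma \ref{twoEquivariantStructuresDifferByALineBundle} makes the splitting canonical up to a locally constant shift, which can be normalised to start at $1$ on each connected component. The ranks $d_{i,n}$ are then locally constant, which gives the open–closed decomposition. The same splitting also yields a family of $\theta^\ell$-stable chains over each piece, and hence an inverse morphism by the universal property of $X_{\mathbf d^\ell}$. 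That inverse would give the isomorphism directly, bypassing the tangent-space argument if needed.
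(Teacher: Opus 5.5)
Your last paragraph is essentially the paper's proof. The paper shows that the type-$\mathbf d^\ell$ locus $Y^{\mathbb C^*}_{\mathbf d^\ell}$ is open and closed in $Y^{\mathbb C^*}$ by splitting the restricted equivariant universal family into weight subbundles of locally constant rank (Lemma \ref{weightDecompositionDimensionIsLocallyConstant}). It then shows that $Y^{\mathbb C^*}_{\mathbf d^\ell}$ represents $\mathcal F^{\theta^\ell}_{\mathbf d^\ell}$: the split universal family gives one direction, and collapsing a family of chains with the automorphisms $t^n\operatorname{Id}$ on $\mathcal V_{i,n}$ gives the other. It concludes by Yoneda, with no point-injectivity or tangent-space argument. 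Your normalisation of the weights on each component via Lemma \ref{twoEquivariantStructuresDifferByALineBundle} is a detail the paper leaves implicit. It is genuinely needed, both for $\operatorname{rk}\mathbb V_{i,n}=d_{i,n}$ and for the two functor maps to be mutually inverse.

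Your primary route is different, but its last step does not go through as stated. A morphism that is bijective onto a smooth clopen target and an isomorphism on Zariski tangent spaces need not be an isomorphism; take $(\mathbb A^1\setminus\{0\})\sqcup\operatorname{Spec}\mathbb C[x]/(x^2)\to\mathbb A^1$. So Zariski's main theorem needs an extra input, such as properness of the source. That input is available: $Q^\ell$ has no oriented cycles, so $\mathbb C[\operatorname{Rep}(Q^\ell,\mathbf d^\ell,\mathcal R^\ell)]^{\operatorname{GL}(\mathbf d^\ell)}=\mathbb C$ and $X_{\mathbf d^\ell}$ is projective. With this, the argument closes as follows.
\begin{enumerate}
\item $\iota_{\mathbf d^\ell}$ is proper, unramified (your injectivity on weight-zero $H^1$) and injective, hence a closed immersion.
\item Its image is automatically clopen in $Y^{\mathbb C^*}$. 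It is one of finitely many pairwise disjoint closed sets covering $Y^{\mathbb C^*}$, by your shift argument.
\item Since $Y^{\mathbb C^*}$ is smooth, hence reduced, $\iota_{\mathbf d^\ell}$ is an isomorphism onto that piece.
\end{enumerate}
This requires that you factor through the fixed-point \emph{scheme} using the $t^n$ automorphisms on the universal chain family, not merely pointwise.

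Repaired this way, your route dispenses with the equivariant structure on the universal family of $Y$ altogether. The price is the GIT tangent-space identification and the properness input. The paper's functorial route needs neither and gives the scheme-theoretic isomorphism directly.
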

	
	\begin{proof}
		First we comment on why the map $\iota_{\mathbf{d}^\ell}$ is well-defined. By Proposition
		\ref{semistableRepresentationIffSemistableChain}, stability of the quiver chain is
		equivalent to stability of the associated representation of \(\overline Q\). Since the relations
		$\mathcal R^\ell$ are precisely the graded components of the equation $\mu_{\mathbb C}(\mathbf{V})=0$, the image of a $\theta^\ell$-stable chain under $\tilde{\iota}_{\mathbf{d}^\ell}$ defines a $\theta$-stable representation of $\overline Q$ satisfying the complex moment map condition. Moreover, the image of $\tilde{\iota}_{\mathbf{d}^\ell}$ is contained in the fixed locus $Y^{\mathbb{C}^*}$ essentially because the family of automorphisms described in Equation (\ref{automorphismRepresentationAdmitingDecompositionIsFixedPoint}). Thus the restriction of the collapsing map $\tilde{\iota}_{\mathbf{d}^\ell}$ to the respective stable loci is well-defined. Furthermore, this map is equivariant with respect to the inclusion $\text{GL}(\mathbf{d}^\ell)\hookrightarrow\text{GL}(\mathbf{d})$ so it descends to the respective quotients giving the desired well-definedness of $\iota_{\mathbf{d}^\ell}$.
		\\
		\\
		Notice that 
		$$
		Y^{\mathbb{C}^*}=\bigsqcup_{\mathbf{d}^\ell\in \mathscr{D}_{\theta}(\mathbf{d})} Y^{\mathbb{C}^*}_{\mathbf{d}^\ell}
		$$
		where $Y_{\mathbf{d}^\ell}^{\mathbb{C}^*}\subseteq Y^{\mathbb{C}^*}$ is the locus of representations that admit a weight decomposition as in Theorem \ref{characterizationFixedPoints}. We now describe the scheme structure of this subset. We start by recalling that the fixed point locus $Y^{\mathbb{C}^*}$ is a closed subscheme of $Y$ \cite[Theorem 7.1]{MilneAlgebraicGroups}. We claim that $Y^{\mathbb{C}^*}_{\mathbf{d}^\ell}$ is a closed subscheme of the fixed point scheme $Y^{\mathbb{C}^*}$ given by the union of connected components of it. To see this we will be needing the following lemma:
		\begin{lemma}\label{weightDecompositionDimensionIsLocallyConstant}
			The map $\mathbf{V}\mapsto \mathbf{d}^{\ell(\mathbf{V})}$ sending a representation fixed by the $\mathbb{C}^*$-action to the dimension vector of the associated quiver chain is locally constant. 
		\end{lemma} 
		\begin{proof}[Proof of Lemma \ref{weightDecompositionDimensionIsLocallyConstant}]
			Recall that in Proposition \ref{equivariantStructureUniversalFamily} we showed that the universal family $\mathbb{M}^{\theta}(\overline{Q},\mathbf{d})$, constructed in Proposition \ref{existenceUniversalFamilyNakajimaQuiverVarieties}, has a $\mathbb{C}^*$-equivariant structure. Therefore, so it does the restriction of this family to the fixed point scheme $Y^{\mathbb{C}^*}$. The action on this scheme being trivial means that there is a splitting $\mathbb{V}_{i}|_{Y^{\mathbb{C}^*}}=\bigoplus_{n\in\mathbb{Z}} \mathbb{V}_{i,n}$, $\mathbb{V}_{i,n}=\{v\in \mathbb{V}_{i}|_{Y^{\mathbb{C}^*}}\ | \ \rho_i(t)(v)=t^{-n}v, \ \forall t\in\mathbb{C}^*\}$, induced by the automorphisms $\rho_i(t)$ defined in Proposition \ref{equivariantStructureUniversalFamily}. A way to see this is by trivializing $\mathbb{V}_i$, seeing that there is a such a splitting locally and then checking that these local splittings glue together into vector bundles over $Y^{\mathbb{C}^*}$. Since the rank of each $\mathbb{V}_{i,n}$ is locally constant, it only remains to show that $\operatorname{rk}(\mathbb{V}_{i,n})=\dim(V_{i,n})$. The fiber $(\mathbb{V}_{i})_{\{[\mathbf{V}]\}}$ is, by construction, the vector space $V_i$. By restricting $\rho_i$ to this fiber we see that $(\mathbb{V}_{i,n})_{\{[\mathbf{V}]\}}$ is precisely $V_{i,n}$ which finishes the proof. 
		\end{proof}
		The previous lemma implies our claim for the following two reasons: locally constant maps are constant on connected components and, on the other hand, the map in the lemma has finite image, meaning that the preimage of a point is a clopen subset of the fixed point scheme. 
		\\
		\\
		The proof is reduced to see that $X_{\mathbf d^\ell}\cong Y^{\mathbb C^*}_{\mathbf d^\ell}$ as schemes via $\iota_{\mathbf{d}^\ell}$ and to see this we will show that $Y^{\mathbb{C}^*}_{\mathbf{d}^\ell}$ represents the functor $\mathcal{F}_{\mathbf{d}_\ell}^{\theta^\ell}$. 
		\\
		\\
		Note that the splittings of the universal bundles $\mathbb{V}_i$ seen in Lemma \ref{weightDecompositionDimensionIsLocallyConstant} are compatible with the arrow maps in the sense that $
		\mathbb{X}_\alpha(\mathbb V_{t\alpha,n})
		\subseteq
		\mathbb V_{h\alpha,n}
		$
		and
		$
		\mathbb{Y}_{\overline\alpha}(\mathbb V_{h\alpha,n})
		\subseteq
		\mathbb V_{t\alpha,n+1}
		$ and that the relations inherited from the complex moment map hold. Thus, the restriction of the universal family to $Y^{\mathbb{C}^*}_{\mathbf{d}^\ell}$ defines a family of $\theta^\ell$-stable quiver chains and therefore, so will do the pullback of this family through a morphism $f:T\to Y^{\mathbb{C}^*}_{\mathbf{d}^\ell}$. This construction
		gives a map
		$\operatorname{Hom}(T,Y^{\mathbb{C}^*}_{\mathbf{d}^\ell})
		\rightarrow
		\mathcal F^{\theta^\ell}_{\mathbf d^\ell}(T)
		$.
		\\
		\\
		Conversely, let $(\mathcal{V}_{\bullet\bullet},\mathcal{X}_{\bullet\bullet},\mathcal{Y}_{\bullet\bullet})$ be a family of $\theta^\ell$-stable quiver chains of dimension vector $\mathbf d^\ell$ over a scheme $T$. The collapsed family $(\mathcal{V}_i:=\bigoplus_{n=1}^{\ell}\mathcal{V}_{i,n},\mathcal{X}_\alpha:=\bigoplus_{n=1}^{\ell}\mathcal{X}_{\alpha,n},\mathcal{Y}_{\overline{\alpha}}:=\bigoplus_{n=1}^{\ell-1}\mathcal{Y}_{\overline{\alpha},n})$ gives a family of Nakajima quiver representations over $T$. Since $Y$ is a fine moduli space for Nakajima quiver representations, this family determines an unique morphism $T\rightarrow Y$. This morphism factors through $Y^{\mathbb{C}^*}_{\mathbf{d}^\ell}$. Indeed, from the decomposition
		$\mathcal{V}_i=\bigoplus_{n=1}^{\ell}\mathcal{V}_{i,n}$ we define, for every $t\in\mathbb C^*$, bundle automorphisms
		$g_i(t):\mathcal{V}_i\rightarrow \mathcal{V}_i$ given by
		$g_i(t)|_{\mathcal{V}_{i,n}}=t^n\operatorname{Id}_{\mathcal{V}_{i,n}}$. We claim that the family of automorphisms $(g_i(t))_{i\in Q_0}$ identify the collapsed family with its $\mathbb C^*$-translate. Note that $g_{h\alpha}(t)\mathcal{X}_\alpha g_{t\alpha}(t)^{-1}=\mathcal{X}_\alpha$ and that
		$g_{t\alpha}(t)\mathcal{Y}_{\overline\alpha}g_{h\alpha}(t)^{-1}=	t\mathcal{Y}_{\overline\alpha}$. Hence the associated morphism \(T\to Y\) factors through the fixed-point locus $Y^{\mathbb C^*}_{\mathbf{d}^\ell}$. This construction gives a natural map $\mathcal F^{\theta^\ell}_{\mathbf d^\ell}(T)\rightarrow \operatorname{Hom}(T,Y^{\mathbb{C}^*}_{\mathbf{d}^\ell})$.
		\\
		\\
		The two constructions just described are inverse to one another. Thus $Y^{\mathbb{C}^*}_{\mathbf{d}^\ell}$ represents the same functor as $X_{\mathbf d^\ell}$ and, by Yoneda's
		lemma, there is a canonical isomorphism $X_{\mathbf d^\ell}\cong Y^{\mathbb C^*}_{\mathbf d^\ell}$. This isomorphism is actually induced by $\iota_{\mathbf d^\ell}$ by our preceding discussion. By the universal property of the coproduct \cite[Section 7.3.8]{VakilAG} these isomorphisms glue into 
		$$
		\bigsqcup_{\mathbf{d}^\ell\in \mathscr{D}_{\theta}(\mathbf{d})}X_{\mathbf{d}^\ell}\overset{\simeq}{\longrightarrow} Y^{\mathbb{C}^*}.
		$$
	\end{proof}
	
	\section{Bia\l ynicki-Birula stratification and its geometry}\label{BialynickiBirulaSection}
	\subsection{Bia\l ynicki-Birula stratification}\label{BialynickiBirulaStratification}
	
	Let $\{F_j\}_{j\in J}$ be the set of connected components of the fixed point set for the $\mathbb{C}^*$-action on $\mathcal{M}^{\theta}(\overline{Q},\mathbf{d})$.By Theorem \ref{quiverChainsClosedImmersion}, there is an isomorphism
	\[
	\mathcal M^\theta(\overline Q,\mathbf d)^{\mathbb C^*}
	\cong
	\bigsqcup_{\mathbf d^\ell\in\mathscr D_\theta(\mathbf d)}
	\mathcal M^{\theta^\ell}
	(Q^\ell,\mathbf d^\ell,\mathcal R^\ell).
	\]
	Consequently, every \(F_j\) is a connected component of one of the
	quiver-chain moduli spaces appearing on the right-hand side, and each such
	moduli space may contribute more than one connected component. Since the
	fixed-point locus is of finite type, the set \(J\) is finite. Moreover, the $F_j$ are smooth projective subvarieties of $\mathcal{M}^{\theta}(\overline{Q},\mathbf{d})$. For all $[\mathbf{V}]\in \mathcal{M}^{\theta}(\overline{Q},\mathbf{d})$, $\lim_{t\to 0}t\cdot [\mathbf{V}]$ exists and by this we mean that there is a $\mathbb{C}^*$-equivariant map $f:\mathbb{A}^1_\mathbb{C}\to \mathcal{M}^{\theta}(\overline{Q},\mathbf{d})$, with respect to the standard $\mathbb{C}^*$-action on the affine line, such that $f(1)=[\mathbf{V}]$. For proofs of these facts about the $\mathbb{C}^*$-action, we refer the reader to the works of Crawley-Boevey, Van den Bergh and Hoskins \cite{CrawleyBoeveyVanDenBergh} \cite{HoskinsSurvey}. 
	\\
	\\
	A $\mathbb{C}^*$-action with the properties just described is called \emph{semi-projective} \cite{HauselRodriguezVillegas} and one of its key features is that it gives rise to a so-called \emph{Bia\l ynicki-Birula stratification} of the moduli space $\mathcal{M}^{\theta}(\overline{Q},\mathbf{d})$. By this we mean that 
	$$
	\mathcal{M}^{\theta}(\overline{Q},\mathbf{d})=\bigsqcup_{j\in J}U_j ^+
	$$
	where $$U_j^+=\{[\mathbf{V}]\in\mathcal{M}^{\theta}(\overline{Q},\mathbf{d}) |  \lim_{t\to 0}t\cdot [\mathbf{V}]\in F_j\}.$$ Furthermore, note that $U_j^+=\bigsqcup_{[\mathbf{W}]\in F_j}U_{[\mathbf{W}]}^+$ for $U_{[\mathbf{W}]}^+=\{[\mathbf{V}]\in\mathcal{M}^{\theta}(\overline{Q},\mathbf{d}) |  \lim_{t\to 0}t\cdot [\mathbf{V}]=[\mathbf{W}]\}$ which is known in the literature as the \emph{upward flow} from $[\mathbf{W}]$. Similarly one can define, for every $j\in J$, 
	$$
	U_j^-=\{[\mathbf{V}]\in \mathcal{M}^{\theta}(\overline{Q},\mathbf{d})| \lim_{t\to \infty}t\cdot[\mathbf{V}]\in F_j\}
	$$
	but contrary to what happens above, the limit on the right hand side of the above equality does not exist for every point in the moduli space. However, Nakajima \cite[Theorem 5.8]{Nakajima} showed that 
	\begin{equation}\label{decompositionNilpotentCone}
	\mathcal{N}=\bigsqcup_{j\in J}U_j^-.
	\end{equation}

	Recall that $\mathcal{N}$ is the nilpotent cone as defined in Section \ref{GITDescriptionNakajimaQuiverVarieties}. 
	Also, we have $U_j^-=\bigsqcup_{[\mathbf{W}]\in F_j}U_{[\mathbf{W}]}^-$ where $U_{[\mathbf{W}]}^-=\{[\mathbf{V}]\in\mathcal{M}^{\theta}(\overline{Q},\mathbf{d}) |  \lim_{t\to \infty}t\cdot [\mathbf{V}]=[\mathbf{W}]\}$ is the so-called \emph{downward flow} from $[\mathbf{W}]$.  Both  $U_j^\pm$ are locally closed subschemes of $\mathcal{M}^{\theta}(\overline{Q},\mathbf{d})$ for which the corresponding limit map $U_j^\pm\to F_j$ defines a Zariski locally trivial affine space fibration. In the next section we will discuss in more detail the dimension of these fibrations. Meanwhile, we will finish this section giving some criteria that describe the upward and downward flows $U_{[\mathbf{W}]}^+$ and $U_{[\mathbf{W}]}^-$. This is a quiver analogue of the Higgs bundle case studied by Hausel and Hitchin \cite[Proposition 3.4 and Proposition 3.11]{HauselHitchin}:
	\begin{proposition}\label{reesConstructionLimitToZero}
		Let $[\mathbf{V}=(V,X,Y)]\in \mathcal{M}^{\theta}(\overline{Q},\mathbf{d})$ then $\lim_{t\to 0}t\cdot[\mathbf{V}]=[\mathbf{V}']$ if and only if 
		\begin{itemize}
			\item there exists a unique filtration 
			{\normalfont
				$$
				0=\mathbf{V}_0\subset \mathbf{V}_1\subset\cdots\subset \mathbf{V}_k=\texttt{Forget}(\mathbf{V})
				$$}
			by subrepresentations such that 
			\item for all $n=1,\ldots,k-1$ and $\alpha\in Q_1$, $$Y_{\overline{\alpha}}(V_{h\alpha,n})\subseteq V_{t\alpha,n+1}$$
			\item and the induced maps 
			{\normalfont$$
				\text{gr}^{n}Y_{\overline{\alpha}}:V_{h\alpha,n}/V_{h\alpha,n-1}\to V_{t\alpha,n+1}/V_{t\alpha,n}
				$$}
			satisfy 
			{\normalfont
				$$
				(\mathbf{V}_1/\mathbf{V}_0\oplus \mathbf{V}_2/\mathbf{V}_1\oplus\cdots \oplus \mathbf{V}_k/\mathbf{V}_{k-1},(\text{gr}^nY_{\overline{\alpha}})_{\substack{\alpha\in Q_1\\ n=1,\ldots,k-1}})\cong \mathbf{V}'.
				$$}
		\end{itemize}
		
	\end{proposition}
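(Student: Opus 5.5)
We will follow the Rees-construction strategy of Hausel and Hitchin. The key observation is that $\lim_{t\to 0}t\cdot[\mathbf V]$ exists in the fine moduli space $\mathcal M^\theta(\overline Q,\mathbf d)$ (Proposition \ref{existenceUniversalFamilyNakajimaQuiverVarieties}). Equivalently, the orbit map $\mathbb{C}^*\to\mathcal M^\theta(\overline Q,\mathbf d)$ extends to a $\mathbb{C}^*$-equivariant morphism $f:\mathbb{A}^1\to\mathcal M^\theta(\overline Q,\mathbf d)$. Pulling back the universal family along $f$, and using the equivariant structure of Proposition \ref{equivariantStructureUniversalFamily}, gives a $\mathbb{C}^*$-equivariant family of $\theta$-stable Nakajima representations over $\mathbb{A}^1$. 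The plan is to translate "equivariant family over $\mathbb{A}^1$ with generic fibre $\mathbf V$" into "filtration of $\mathbf V$", in both directions.

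\textbf{Sufficiency (filtration $\Rightarrow$ limit).} Given the filtration, split it non-canonically, $V_i\cong\bigoplus_n V_{i,n}/V_{i,n-1}$. Define $g_i(t)$ to act by $t^{-n}$ (or $t^{n}$, depending on the convention fixed by Equation (\ref{automorphismRepresentationAdmitingDecompositionIsFixedPoint})) on the $n$-th graded piece, and compute $g(t)\cdot(t\cdot\mathbf V)$. Since each $X_\alpha$ preserves the filtration, conjugation leaves its block-diagonal part fixed and scales its strictly off-diagonal components by positive powers of $t$. For $tY_{\overline\alpha}$, the condition $Y_{\overline\alpha}(V_{h\alpha,n})\subseteq V_{t\alpha,n+1}$ means its components from level $n$ to level $m\le n+1$ get multiplied by $t^{1+m-n}$ with exponent $\ge 0$. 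The exponent is exactly $0$ on the $\text{gr}^nY_{\overline\alpha}$ components. Hence $g(t)\cdot(t\cdot\mathbf V)$ extends to $t=0$ in $\mu_{\mathbb C}^{-1}(0)$, with value the associated graded $\mathbf V'$ described in the statement. If $\mathbf V'$ is $\theta$-stable, then continuity of the quotient map on the stable locus gives $\lim t\cdot[\mathbf V]=[\mathbf V']$. Stability of $\mathbf V'$ is part of the hypothesis $[\mathbf V']\in\mathcal M^\theta$; alternatively, one can use properness of $\pi$ together with the fact that the limit exists to conclude that the closed orbit in the stable locus is the class of $\mathbf V'$.

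\textbf{Necessity (limit $\Rightarrow$ filtration).} Let $\mathcal V$ be the equivariant family over $\mathbb{A}^1=\operatorname{Spec}\mathbb C[t]$. Each $\mathcal V_i$ is a $\mathbb{C}^*$-equivariant $\mathbb C[t]$-module, free of rank $d_i$, i.e.\ a graded free $\mathbb C[t]$-module. By equivariance, restriction to $t=1$ identifies it with $V_i$. We identify the degree-$n$ part $\mathcal V_{i}^{(n)}$ with a subspace $V_{i,n}\subseteq V_i$ via $t=1$. Multiplication by $t$ gives inclusions $V_{i,n}\subseteq V_{i,n+1}$, which produces the filtration (after reindexing so that it starts at $0$ and ends at $V_i$, and discarding repeated steps). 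The $\mathbb{X}_\alpha$ have weight $0$, so they preserve each $V_{\bullet,n}$; hence each step is a $Q$-subrepresentation. The $\mathbb{Y}_{\overline\alpha}$ have weight $1$, so they shift degree by one, which is exactly the condition $Y_{\overline\alpha}(V_{h\alpha,n})\subseteq V_{t\alpha,n+1}$. The fibre at $t=0$ is $\bigoplus_n\mathcal V^{(n)}/t\mathcal V^{(n-1)}=\bigoplus_n V_n/V_{n-1}$, and the induced arrows on it are exactly $\text{gr}^nY_{\overline\alpha}$ together with the graded $X$. Since $f(0)=[\mathbf V']$, we obtain $\mathbf V'\cong$ the associated graded.

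\textbf{Uniqueness and the main obstacle.} Uniqueness should follow from the uniqueness of the equivariant structure up to twisting by a character, which is Lemma \ref{twoEquivariantStructuresDifferByALineBundle}. Such a twist only shifts the grading, so it changes the indexing of the filtration but not the filtration itself; once we normalize to drop repeated steps and start at $0$, the filtration is determined. We also need the converse: any filtration realizing the limit arises this way. This holds because the Rees module of the filtration defines an equivariant family extending the orbit map, and by separatedness such an extension is unique. I expect this to be the main obstacle. The Rees module has to be an honest family over $\mathbb{A}^1$ of stable representations, and a grading has to be pinned down in order to compare Rees modules; both require care about stabilizers (scalars only) and about the normalization of the indexing.
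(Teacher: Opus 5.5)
Your proposal is correct. In the necessity direction it is the paper's argument in module language. The paper's $V_{i,n}=\{v:\lim_{t\to0}t^n\rho_i(t,1)(v)\text{ exists}\}$ is exactly your degree-$n$ part of the graded free $\mathbb{C}[t]$-module evaluated at $t=1$. Where you read off the special fibre as $\bigoplus_n\mathcal V^{(n)}/t\mathcal V^{(n-1)}$, the paper instead builds injections $\gamma_{i,n}$ into distinct weight spaces and counts dimensions.

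The genuine difference is in sufficiency. The paper forms the Rees module $\bigoplus_n x^{-n}V_{i,-n}$, checks that it is free and carries a Nakajima family with a grading-induced $\mathbb{C}^*$-equivariant structure, and then invokes the fine moduli property. You instead split the filtration and degenerate along a one-parameter subgroup. Then $t\mapsto g(t)\cdot(t\cdot\mathbf V)$ is a morphism $\mathbb{A}^1\to\mu_{\mathbb C}^{-1}(0)^{\theta\text{-}s}$: it is stable for $t\neq0$ because it is isomorphic to $t\cdot\mathbf V$, and stable at $t=0$ by hypothesis. You then compose with the quotient morphism. These are the same object, since choosing a splitting trivialises the Rees module to $V_i[x]$ with arrows $g(x)\cdot(x\cdot\mathbf V)$. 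Your version needs neither the universal family nor its equivariant structure for this direction. The paper's splitting-free family, on the other hand, is what it reuses for uniqueness.

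On uniqueness the two accounts are complementary. The paper only verifies that the filtration recovered from the Rees family of a filtration is that same filtration. It leaves implicit why two admissible filtrations must give equivariantly equivalent families. You supply that step, via separatedness plus Lemma \ref{twoEquivariantStructuresDifferByALineBundle}; the scalar ambiguity at $t=1$ is harmless by stability. You should, however, also state the recovery computation, since your claim that ``any filtration realizing the limit arises this way'' rests on it.

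One slip: the sign is not a matter of convention. $g_i(t)$ must act by $t^{-n}$ on the $n$-th piece. Then the block of $tY_{\overline\alpha}$ from level $n$ to level $m$ is scaled by $t^{1+n-m}$, not $t^{1+m-n}$. Only with $t^{1+n-m}$ is the exponent nonnegative for $m\le n+1$ and zero exactly on the $\text{gr}^nY_{\overline\alpha}$ components.
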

	\begin{proof}
		Let us assume there exists a filtration of $\texttt{Forget}(\mathbf{V})$ as in the statement of the proposition for which the corresponding graded representation of $\overline{Q}$ is stable. For every vertex $i\in Q_0$, we consider the $\mathbb{Z}$-graded $\mathbb{C}[x]$-module given by 
		\begin{equation}\label{reesModule}
			\mathcal{V}_i:=\bigoplus_{n\in \mathbb{Z}}x^{-n}V_{i,-n}
		\end{equation}
		for which $x$ acts via the embedding $x^{-n}V_{i,-n}\hookrightarrow x^{-n+1}V_{i,-n+1}$. Here $V_{i,n}=V_{i,k}$ for all $n\geq k$ and $V_{i,n}=0$ for all $n\leq 0$. We claim that $\mathcal{V}_i$ is a free $\mathbb{C}[x]$-module hence giving a vector bundle over $\mathbb{A}^1_{\mathbb{C}}$. First notice that $\mathcal{V}_i$ is finitely generated as it is a submodule of the finitely generated $\mathbb{C}[x]$-module $V_{i,k}[x]=\bigoplus_{n\in\mathbb{Z}}x^{-n}V_{i,k}$. Moreover, $\mathcal{V}_i$ is torsion-free because $V_{i,k}[x]$ is and hence the conclusion. 
		\\
		\\
		For all $\alpha\in Q_1$ we consider the morphism of free $\mathbb{C}[x]$-modules given by 
		$$
		\begin{matrix}
			\mathcal{X}_\alpha:&\mathcal{V}_{t\alpha} & \longrightarrow & \mathcal{V}_{h\alpha} \\
			&\sum_nx^{-n}v_{t\alpha,-n}&\longmapsto&\sum_nx^{-n}X_{\alpha}(v_{t\alpha,-n})
		\end{matrix}
		$$
		and similarly we define 
		$$\begin{matrix}
			\mathcal{Y}_{\overline{\alpha}}:&\mathcal{V}_{h\alpha}&\longrightarrow & \mathcal{V}_{t\alpha} \\
			&\sum_nx^{-n}v_{h\alpha,-n}&\longmapsto&\sum_nx^{-n+1}Y_{\overline{\alpha}}(v_{h\alpha,-n})
		\end{matrix}$$ 
		Therefore, $\mathcal{V}=((\mathcal{V}_i)_{i\in Q_0},(\mathcal{X}_{\alpha},\mathcal{Y}_{\overline{\alpha}})_{\alpha\in Q_1})$ is a family of Nakajima quiver representations over $\mathbb{A}^1_\mathbb{C}$. 
		\\
		\\
		For all closed point $s\in \mathbb{A}^1_\mathbb{C}$, 
		$$
		(\mathcal{V}_i)_{\{s\}}\cong \mathcal{V}_i\otimes_{\mathbb{C}[x]}\mathbb{C}[x]/(x-s)\cong \mathcal{V}_i/(x-s)\mathcal{V}_i, 
		$$
		therefore $\mathcal{V}_{\{s\}}\cong s\cdot \mathbf{V}$ for $s\in \mathbb{C}^*$ and $\mathcal{V}_{\{0\}}\cong \mathbf{V}'$ so our family interpolates between $\mathbf{V}$ and $\mathbf{V}'$. We have a $\mathbb C^*$-equivariant structure on the family which is induced by
		the grading (see, for instance, \cite[Example 39.12.3]{stacksProject}): on
		$\mathbb A_\mathbb{C}^1=\text{Spec}(\mathbb{C}[x])$ we consider the standard action of \(\mathbb C^*\), so that on points $t\cdot s=t s$. Equivalently, on the coordinate ring, the coordinate $x$ has weight
		$-1$. Thus $x^{-n}$ has weight $n$. Therefore, the action
		on the homogeneous summand $x^{-n}V_{i,-n}\subset \mathcal V_i$ is given by
		$t\cdot(x^{-n}v)=t^n x^{-n}v$. This gives a $\mathbb C^*$-linearization of each vector bundle $\mathcal V_i$, covering the standard action on $\mathbb A_{\mathbb{C}}^1$. Equivalently, for every $t\in\mathbb C^*$ and every point
		$s\neq 0\in\mathbb A^1_{\mathbb{C}}$, the induced map on fibres, $\rho_i(t,s):(\mathcal V_i)_{\{s\}}\rightarrow(\mathcal V_i)_{\{t s\}}$, is defined by
		$$
		\rho_i(t,s)(x^{-n}v|_{x=s})
		= t^n x^{-n}v|_{x=t s}=t^n(t s)^{-n}v=s^{-n}v.
		$$
		Thus over \(\mathbb C^*\), the equivariant transport identifies the
		nonzero fibres without changing the underlying vector \(v\in V_i\). The behavior at the special fiber will be discussed below when we treat the converse direction. 
		The arrow maps can easily be seen to be compatible with this equivariant structure (see Equations (\ref{compatibilityEqStructureWithArrows}) and (\ref{compatibilityEqStructureWithOppositeArrows})).
		Since $\mathcal{M}^{\theta}(\overline{Q},\mathbf{d})$ is a fine moduli space, this family induces a $\mathbb{C}^*$-equivariant morphism $\mathbb{A}^{1}_{\mathbb{C}}\to \mathcal{M}^\theta(\overline{Q},\mathbf{d})$ which is equivalent to the desired conclusion. 
		\\
		\\
		Now we show the converse statement. Suppose that $\lim_{t\to 0}t\cdot [\mathbf{V}]=[\mathbf{V}']$ which is the same as having a $\mathbb{C}^*$-equivariant map $\mathbb{A}^1_{\mathbb{C}}\to \mathcal{M}^{\theta}(\overline{Q},\mathbf{d})$. Since $\mathcal{M}^\theta({\overline{Q},\mathbf{d}})$ is a fine moduli space, such a map gives a family $\mathcal{V}=((\mathcal{V}_i)_{i\in Q_0},(\mathcal{X}_{\alpha},\mathcal{Y}_{\overline{\alpha}})_{\alpha\in Q_1})$ over the affine line such that $\mathcal{V}_{\{s\}}\cong s\cdot \mathbf{V}$, $\mathcal{V}_{\{0\}}\cong \mathbf{V}'$  and such that the projection map onto the affine line is $\mathbb{C}^*$-equivariant. The equivariant structure gives, for every $t\in\mathbb C^*$, $s\in \mathbb{C}$ and every vertex \(i\in Q_0\), isomorphisms $\rho_i(t,s):(\mathcal{V}_i)_{\{s\}}\rightarrow (\mathcal{V}_{i})_{\{t s\}}$ as in Definition \ref{equivariantStructures}.
		\\
		\\
		We now define, for every vertex \(i\in Q_0\) and every $n\in \mathbb{Z}_{\geq0}$,
		\begin{equation}\label{piecesFiltrationFromFamily}
			V_{i,n}=\{v\in V_i\cong (\mathcal{V}_i)_{\{1\}}\ |
			\ \lim_{t\to 0}t^n\rho_i(t,1)(v)\text{ exists}\}.
		\end{equation}
		These are vector subspaces of $V_i$ and also can be understood as the set of those vectors $v$ for which the equivariant section $t\overset{\sigma}{\mapsto} t^n\rho_i(t,1)(v)$ extends to a regular section over $\mathbb{A}^1_{\mathbb{C}}$. Moreover, notice that $V_{i,n}\subseteq V_{i,n+1}$ so we have a filtration
		\[
		0=V_{i,0}\subset V_{i,1}\subset\cdots\subset V_{i,k}=V_i.
		\]
		In principle, there could exist non-empty subspaces $V_{i,n}$ for negative integers $n$ but one can twist the equivariant structure by a suitable character for which the filtration, for every vertex $i\in Q_0$, starts at step 0. This is justified by Lemma \ref{twoEquivariantStructuresDifferByALineBundle} and subsequent discussion.  
		\\
		\\
		We claim that these filtrations induce a filtration by subrepresentations of
		$\texttt{Forget}(\mathbf V)$. Let \(\alpha\in Q_1\) and take
		\(v\in V_{t\alpha,n}\). Now observe the following:
		\begin{equation}\label{evaluatingMorphismInArrowsCommutesWithLimits}
			\mathcal{X}_{\alpha}(\lim_{t\to 0}t^n\rho_{t\alpha}(t,1)(v))=\lim_{t\to 0}
			\mathcal{X}_{\alpha}(t^n\rho_{t\alpha}(t,1)(v))=\lim_{t\to 0} t^n\rho_{h\alpha}(t,1)(\mathcal{X}_\alpha(v)).
		\end{equation}
		Equation (\ref{compatibilityEqStructureWithArrows}) gives the right-hand side equality and the left-hand side one is because computing the value of $\mathcal{X}_\alpha$ at $\sigma(0)$ is the same as saying that the section $\mathcal{X}_{\alpha}\circ \sigma$ can be extended to the affine line and then computing its value at $t=0$. Since the limit on the left-hand side of the equation exists, we get that $\mathcal{X}_\alpha(V_{t\alpha,n})\subseteq V_{h\alpha,n}$. A similar strategy, using now Equation (\ref{compatibilityEqStructureWithOppositeArrows}), shows that $\mathcal{Y}_{\overline\alpha}(V_{h\alpha,n})\subseteq V_{t\alpha,n+1}$ for an opposite arrow $\overline{\alpha}$.
		\\
		\\
		Next, we identify the special fibre with the associated graded
		representation. For every vertex $i\in Q_0$ and every $n=1,\ldots,k$, define
		$$
		\begin{matrix}
			\widetilde\gamma_{i,n}:&V_{i,n}&\longrightarrow &(\mathcal V_{i})_{\{0\}} \\
			&v&\longmapsto& \lim_{t\to 0}
			t^n\rho_i(t,1)(v).
		\end{matrix}
		$$
		This map is essentially the evaluation map of the equivariant section $\sigma$ at $t=0$. By construction, $V_{i,n-1}\subseteq \ker(\widetilde\gamma_{i,n})$ and, conversely, if
		$\widetilde\gamma_{i,n}(v)=0$, then
		$\lim_{t\to 0}t^n\rho_i(t,1)(v)=0$
		which means that $\sigma(0)=0$ so that, in particular, $t$ divides $\sigma$. The latter is because $\sigma$ can be written as a tuple of regular functions (polynomials) on $t$ so saying that $\sigma(0)=0$ is the same as saying that $t$ divides each of these functions. Hence $\ker(\widetilde\gamma_{i,n})=V_{i,n-1}$ and $\widetilde\gamma_{i,n}$ induces an injection
		$$
		\gamma_{i,n}:
		V_{i,n}/V_{i,n-1}
		\hookrightarrow
		(\mathcal{V}_i)_{\{0\}}.
		$$
		Now we describe the image of this map. For $\lambda\in \mathbb{C}^*$ we compute 
		$$
		\rho_i(\lambda,0)(\lim_{t\to 0}t^{n}\rho_i(t,1)(v))=\lim_{t\to 0}t^{n}\rho_i(\lambda,t)\rho_i(t,1)(v)=\lambda^{-n}\lim_{t\to 0}(\lambda t)^{n}\rho_i(\lambda t,1)(v) = \lambda^{-n}\lim_{t\to 0} t^n\rho_i(t,1)(v)
		$$
		so the image of $\gamma_{i,n}$ is contained in the weight $-n$ space of the representation $\rho_i(\bullet,0):\mathbb{C}^*\to \text{GL}((\mathcal{V}_i)_{\{0\}})$. The map
		$$
		\oplus_{n}\gamma_{i,n}:V_{i,1}/V_{i,0}\oplus \ldots\oplus V_{i,k}/V_{i,k-1}\to (\mathcal{V}_i)_{\{0\}}
		$$
		is then an injective linear map between vector spaces of the same dimension, hence an isomorphism. 
		It remains to check that these isomorphisms are compatible with the arrow maps. For an ordinary arrow \(\alpha\in Q_1\) and \(v\in V_{t\alpha,n}\), we have
		$$
		\mathcal{X}_{\alpha}(\gamma_{t\alpha,n}([v]))
		=
		\mathcal{X}_{\alpha}\bigl(t^n\rho_{t\alpha}(t,1)(v)\bigr)|_{t=0}
		=
		t^n\rho_{h\alpha}(t,1)(X_\alpha(v))|_{t=0}
		=
		\gamma_{h\alpha,n}([X_\alpha(v)]).
		$$
		Thus the ordinary arrow on the special fibre is identified with the
		degree-zero associated graded map. For an opposite arrow \(\overline\alpha\) and \(v\in V_{h\alpha,n}\),
		$$
		\mathcal{Y}_{\overline\alpha}(\gamma_{h\alpha,n}([v]))
		=
		\mathcal{Y}_{\overline\alpha}\bigl(t^n\rho_{h\alpha}(t,1)(v)\bigr)|_{t=0}
		=
		\left.t^{n+1}\rho_{t\alpha}(t,1)(Y_{\overline\alpha}(v))\right|_{t=0}
		=
		\gamma_{t\alpha,n+1}([Y_{\overline\alpha}(v)]).
		$$
		Hence the opposite arrow is identified with the graded map
		$V_{h\alpha,n}/V_{h\alpha,n-1}\to V_{t\alpha,n+1}/V_{t\alpha,n}.
		$
		\\
		\\
		Finally, we show the uniqueness of the filtration. Our strategy will be the following: we start with a filtration as in the statement of the proposition, then we apply to it the construction in Equation (\ref{reesModule}) and onwards. Then, from the family obtained we construct a filtration by subrepresentations of $\texttt{Forget}(\mathbf{V})$ following Equation (\ref{piecesFiltrationFromFamily}) and subsequent discussion. Finally, we check that the filtration obtained agrees with the one we started with. Let
		$
		0=\mathbf V_0\subset \mathbf V_1\subset\cdots\subset \mathbf V_k
		=\texttt{Forget}(\mathbf V)
		$
		be a filtration by subrepresentations of $Q$ satisfying the conditions of the proposition. Then we obtain a $\mathbb{C}^*$ equivariant family of representations  $\mathcal{V}=((\mathcal{V}_i)_{i\in Q_0},(\mathcal{X}_{\alpha},\mathcal{Y}_{\overline{\alpha}})_{\alpha\in Q_1})$ over $\mathbb{A}^1_{\mathbb{C}}$ whose fiber at $s=1$ is $\mathbf{V}$ and at $s=0$ is the associated graded representation of the filtration. 
		Now we define $\widetilde V_{i,n}$ as in Equation (\ref{piecesFiltrationFromFamily}) and we claim that $\widetilde V_{i,n}=V_{i,n}$ for every vertex $i\in Q_0$ and every $n=1,\ldots,k$. Note that
		$$
		v\in \widetilde V_{i,n}
		\quad\Longleftrightarrow\quad
		x^n v\in \mathcal V_i.
		$$
		So, if \(v\in V_{i,n}\), then clearly $v\in\widetilde V_{i,n}$. Conversely, if $v\in\widetilde V_{i,n}$, then $x^n v\in\mathcal V_i.$ and thus we may write $x^n v=\sum_m f_m(x)x^{-m}v_{-m},$
		with \(f_m(x)\in\mathbb C[x]\) and \(v_m\in V_{i,-m}\). Looking at the
		coefficient of \(x^n\), a term $f_m(x)x^{-m}v_{-m}$ can contribute only if $-m+j=n$
		for some $j\geq0$. Hence $-m\leq n$ and  since the filtration is increasing, $V_{i,-m}\subseteq V_{i,n}$.
		Therefore every contribution to the coefficient $v$ lies in $V_{i,n}$, and so $v\in V_{i,n}$. This concludes the proof.
	\end{proof}

	We conclude by recording the analogue statement of Proposition \ref{reesConstructionLimitToZero} for the downward flow. The proof is obtained by adapting the argument given there.
	\begin{proposition}
		Let $[\mathbf{V}=(V,X,Y)]\in \mathcal{M}^{\theta}(\overline{Q},\mathbf{d})$ then $\lim_{t\to \infty}t\cdot[\mathbf{V}]=[\mathbf{V}']$ if and only if 
		\begin{itemize}
			\item there exists a unique filtration 
			{\normalfont
				$$
				0=\mathbf{V}_0\subset \mathbf{V}_1\subset\cdots\subset \mathbf{V}_k=\texttt{Forget}(\mathbf{V})
				$$}
			by subrepresentations such that 
			\item for all $n=1,\ldots,k-1$ and $\alpha\in Q_1$, 
			$$Y_{\overline{\alpha}}(V_{h\alpha,n+1})\subseteq V_{t\alpha,n}$$
			\item and the induced maps 
			{\normalfont$$
				\text{gr}^{n}_\infty(Y_{\overline{\alpha}}):V_{h\alpha,n}/V_{h\alpha,n-1}\to V_{t\alpha,n-1}/V_{t\alpha,n-2}
				$$}
			satisfy 
			{\normalfont
				$$
				(\mathbf{V}_1/\mathbf{V}_0\oplus \mathbf{V}_2/\mathbf{V}_1\oplus\cdots \oplus \mathbf{V}_k/\mathbf{V}_{k-1},(\text{gr}_\infty(Y_{\overline{\alpha}}))_{\alpha\in Q_1})\cong \mathbf{V}'.
				$$}
		\end{itemize}
		
	\end{proposition}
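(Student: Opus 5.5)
The plan is to reduce the downward-flow statement to Proposition \ref{reesConstructionLimitToZero} by inverting the parameter. Write $s=t^{-1}$, so that $\lim_{t\to\infty}t\cdot[\mathbf{V}]$ exists if and only if there is a $\mathbb{C}^*$-equivariant morphism $\mathbb{A}^1_{\mathbb{C}}\to\mathcal{M}^{\theta}(\overline{Q},\mathbf{d})$, $s\mapsto s^{-1}\cdot[\mathbf{V}]$ for $s\neq 0$, where $\mathbb{C}^*$ now acts on $\mathbb{A}^1$ with weight $-1$. Equivalently, via the fine moduli property (Proposition \ref{existenceUniversalFamilyNakajimaQuiverVarieties}), this is a family $\mathcal{V}$ over $\mathbb{A}^1$ with $\mathcal{V}_{\{s\}}\cong s^{-1}\cdot\mathbf{V}$. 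The equivariant structure has the same shape as in Definition \ref{equivariantStructures}, but because the action on the base is inverted, the opposite arrows now carry weight $-1$ relative to the coordinate $s$. All constructions below are the ones in the proof of Proposition \ref{reesConstructionLimitToZero} with this sign change.

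For the direction ``filtration $\Rightarrow$ limit'', I would use a Rees module with the shift on $Y$ reversed. Set $\mathcal{V}_i=\bigoplus_n x^{-n}V_{i,-n}$ exactly as in Equation (\ref{reesModule}); it is free by the same finite generation and torsion-freeness argument. Define $\mathcal{X}_\alpha$ degreewise as before, but let $\mathcal{Y}_{\overline\alpha}$ send $x^{-n}v$ with $v\in V_{h\alpha,-n}$ to $x^{-n-1}Y_{\overline\alpha}(v)$. This is well defined precisely because of the hypothesis $Y_{\overline\alpha}(V_{h\alpha,n+1})\subseteq V_{t\alpha,n}$. The moment map relation holds on generic fibres and hence, by torsion-freeness, on the whole family. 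The fibre at $x=s\neq0$ is $\mathbf{V}$ with $Y$ rescaled by $s^{-1}$, i.e.\ $s^{-1}\cdot\mathbf{V}$. The fibre at $0$ is the associated graded representation with the maps $\mathrm{gr}_\infty(Y_{\overline\alpha})$, which lower the filtration index by one; this is stable by assumption. The grading gives the equivariant structure, and the fine moduli property then produces the limit at infinity.

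For the converse, I would start from the family attached to the equivariant map and define
\[
V_{i,n}=\{v\in V_i\mid \lim_{t\to\infty}t^{-n}\rho_i(t,1)(v)\text{ exists}\},
\]
equivalently $\lim_{s\to0}s^{n}\rho_i(s^{-1},1)(v)$, normalised using Lemma \ref{twoEquivariantStructuresDifferByALineBundle} so that the filtration starts at $0$. Compatibility (\ref{compatibilityEqStructureWithArrows}) shows that the $X_\alpha$ preserve each $V_{\bullet,n}$. Compatibility (\ref{compatibilityEqStructureWithOppositeArrows}) produces a factor $t$, which turns $t^{-n}$ into $t^{-(n-1)}$; hence $Y_{\overline\alpha}(V_{h\alpha,n+1})\subseteq V_{t\alpha,n}$. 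The evaluation maps $\gamma_{i,n}$ into the special fibre are then constructed verbatim. Injectivity follows from the divisibility argument, surjectivity from a dimension count over the weight spaces, and compatibility with the arrows by the same computation as before, now with the index shift $n\mapsto n-1$ on opposite arrows. Uniqueness follows by running the Rees construction and then the limit construction and comparing coefficients, exactly as at the end of the previous proof.

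The main obstacle I expect is bookkeeping of signs and weights at the special fibre. One has to check that the weight $-1$ on opposite arrows, combined with the inverted base action, still gives a genuine $\mathbb{C}^*$-equivariant family whose weight decomposition at $0$ matches the grading with the shifted indices $V_{h\alpha,n}/V_{h\alpha,n-1}\to V_{t\alpha,n-1}/V_{t\alpha,n-2}$ of the statement. I would handle this by first recording the equivariant identity $\rho_i(\lambda,0)\circ\lim=\lambda^{n}\lim$ with the correct sign of the exponent, and only then transporting the remaining arguments.
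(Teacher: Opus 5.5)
Your route is the one the paper intends: it gives no separate argument, only that the proof is obtained by adapting Proposition \ref{reesConstructionLimitToZero}. Your sign bookkeeping is also correct. Defining $V_{i,n}$ through $\lim_{t\to\infty}t^{-n}\rho_i(t,1)(v)$ gives an increasing filtration, and the factor $t$ in (\ref{compatibilityEqStructureWithOppositeArrows}) yields $Y_{\overline\alpha}(V_{h\alpha,n})\subseteq V_{t\alpha,n-1}$. The map $\gamma_{i,n}$ lands in the weight-$n$ space of $\rho_i(\lambda,\infty)$. The Rees family with $\mathcal Y_{\overline\alpha}(x^m v)=x^{m-1}Y_{\overline\alpha}(v)$ has fibre $s^{-1}\cdot\mathbf V$ at $s\neq 0$.

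There is, however, one step that does not follow from the hypothesis you cite. Since $V_{t\alpha,0}=0$, the module $\mathcal V_{t\alpha}=\bigoplus_{m\geq 1}x^mV_{t\alpha,m}$ has no degree-$0$ part. So $\mathcal Y_{\overline\alpha}(x v)=x^0Y_{\overline\alpha}(v)$ for $v\in V_{h\alpha,1}$ only makes sense if $Y_{\overline\alpha}(V_{h\alpha,1})=0$. This is the case $n=0$ of the inclusion, while the statement assumes it only for $n=1,\ldots,k-1$. In the upward case the corresponding boundary inclusion at the top, $Y_{\overline\alpha}(V_{h\alpha,k})\subseteq V_{t\alpha,k+1}=V_{t\alpha}$, is automatic. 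Here the bottom one is a genuine constraint, and without it the \emph{if} direction fails.

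To see this, take $\mathbf V=(V,X,Y)$ with $(V,X)$ $\theta$-stable as a $Q$-representation and $Y\neq 0$. Such points exist whenever $\mathcal M^\theta(Q,\mathbf d)\neq\varnothing$ and $\langle\mathbf d,\mathbf d\rangle_Q\leq 0$. For $k=1$ the inclusion hypotheses are vacuous. The trivial filtration is the only admissible one, since a longer one would split the indecomposable $(V,X)$, and its graded object $(V,X,0)$ is stable. Yet on the open subset $T^*\mathcal M^\theta(Q,\mathbf d)\subseteq\mathcal M^\theta(\overline Q,\mathbf d)$ the orbit is $t\mapsto(x,t\xi)$ with $\xi\neq0$. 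This does not converge to $(x,0)=[(V,X,0)]$ as $t\to\infty$; that point is its limit as $t\to 0$.

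So you should state and use the inclusion for $n=0,\ldots,k-1$, i.e. add $Y_{\overline\alpha}(V_{h\alpha,1})=0$ for all $\alpha$. This is the analogue of $\Phi(E_1)=0$ in Hausel--Hitchin, and it is consistent with $\mathcal N=\bigsqcup_jU_j^-$. Your converse direction already produces this condition. After normalising so that $V_{i,0}=0$, the compatibility computation gives $Y_{\overline\alpha}(V_{h\alpha,n})\subseteq V_{t\alpha,n-1}$ for every $n$, including $n=1$. With that correction the remaining steps go through as you describe: freeness, the two fibres, injectivity via weights plus the dimension count, and uniqueness by comparing the coefficient of $x^nv$.
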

	
	\subsection{Deformation theory of the $\mathbb{C}^*$-action}
	Let $[\mathbf{V}=(V,X,Y)]\in \mathcal{M}^{\theta}(\overline{Q},\mathbf{d})$  be a fixed point for the $\mathbb{C}^*$-action. Then, we have an induced representation $\mathbb{C}^*\to \text{GL}(T_{[\mathbf{V}]}\mathcal{M}^{\theta}(\overline{Q},\mathbf{d}))$ and hence a decomposition into weight spaces
	$$
	T_{[\mathbf{V}]}\mathcal{M}^\theta(\overline{Q},\mathbf{d})\cong \bigoplus_{k\in \mathbb{Z}} T_{[\mathbf{V}]}\mathcal{M}^\theta(\overline{Q},\mathbf{d})_k
	$$
	where $T_{[\mathbf{V}]}\mathcal{M}^\theta(\overline{Q},\mathbf{d})_k$ is the component where $t\in \mathbb{C}^*$ acts with weight $k$. Following Hausel and Hitchin \cite[Section 2.1]{HauselHitchin} we denote $T_{[\mathbf{V}]}\mathcal{M}^\theta(\overline{Q},\mathbf{d})^+,\ T_{[\mathbf{V}]}\mathcal{M}^\theta(\overline{Q},\mathbf{d})^-$ and $T_{[\mathbf{V}]}\mathcal{M}^\theta(\overline{Q},\mathbf{d})_0$ the positive, negative and zero weight components of this decomposition so that $T_{[\mathbf{V}]}\mathcal{M}^\theta(\overline{Q},\mathbf{d})\cong T_{[\mathbf{V}]}\mathcal{M}^\theta(\overline{Q},\mathbf{d})^+\oplus T_{[\mathbf{V}]}\mathcal{M}^\theta(\overline{Q},\mathbf{d})_0\oplus T_{[\mathbf{V}]}\mathcal{M}^\theta(\overline{Q},\mathbf{d})^-$. The following proposition relates the upward and downward flows with the weight decomposition just introduced:
	\begin{proposition}\label{upAndDownFlowsIsoWithWeightedTangentSpaces}
		\cite[Proposition 2.1]{HauselHitchin}  For a fixed point $[\mathbf{V}]$ the upward and downward flows $U_{[\mathbf{V}]}^\pm$ are $\mathbb{C}^*$-invariant and $U_{[\mathbf{V}]}^\pm\cong T_{[\mathbf{V}]}\mathcal{M}^\theta(\overline{Q},\mathbf{d})^\pm$ as varieties with $\mathbb{C}^*$-action.
	\end{proposition}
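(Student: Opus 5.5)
Since the statement is quoted from Hausel--Hitchin, the plan is to check that their hypotheses hold for $\mathcal M^\theta(\overline Q,\mathbf d)$ and then run the standard Bia\l ynicki-Birula linearization argument.

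\emph{Step 1: setup and invariance.} By Section \ref{BialynickiBirulaStratification}, $\mathcal M^\theta(\overline Q,\mathbf d)$ is smooth and quasi-projective. The $\mathbb C^*$-action is algebraic and semiprojective, so it has limits at $0$ for every point, and the fixed locus is proper. The invariance of $U^\pm_{[\mathbf V]}$ is immediate: if $\lim_{t\to 0}t\cdot[\mathbf W]=[\mathbf V]$, then for any $s\in\mathbb C^*$ we have $\lim_{t\to0}t\cdot(s\cdot[\mathbf W])=\lim_{t\to 0}(ts)\cdot[\mathbf W]=[\mathbf V]$. The same argument works at $\infty$.

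\emph{Step 2: local linearization.} I will use that a fixed point of a $\mathbb C^*$-action on a quasi-projective normal variety has a $\mathbb C^*$-stable affine open neighbourhood (Sumihiro). On this affine $U=\operatorname{Spec}A$, the maximal ideal $\mathfrak m$ of $[\mathbf V]$ is $\mathbb C^*$-stable. By complete reducibility, I choose a graded complement of $\mathfrak m^2$ in $\mathfrak m$ mapping isomorphically onto the cotangent space $\mathfrak m/\mathfrak m^2=T^\vee$. This gives an equivariant morphism $\phi:U\to T_{[\mathbf V]}\mathcal M^\theta(\overline Q,\mathbf d)$ that is étale at $[\mathbf V]$.

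\emph{Step 3: the attracting sets.} The points of $U^+_{[\mathbf V]}$ are exactly those $x$ whose orbit map $\mathbb C^*\to\mathcal M$ extends to $\mathbb A^1$ with $0\mapsto[\mathbf V]$. Such an orbit lies eventually in $U$, and by invariance it lies entirely in $U$, since $U$ may be taken to contain all of $U^+_{[\mathbf V]}$ after shrinking around a $\mathbb C^*$-stable set. On $A$, this condition says that every homogeneous function of negative weight vanishes along the orbit and every function in $\mathfrak m$ of weight $0$ vanishes there. The attracting set is therefore cut out by the ideal generated by $A_{<0}$ and $\mathfrak m\cap A_0$. Under $\phi$, this corresponds to the linear subspace $T^+$, where the coordinates of weight $\le 0$ vanish.

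To show that $\phi|_{U^+}:U^+_{[\mathbf V]}\to T^+$ is an isomorphism, I will use that the coordinate ring of $U^+$ is positively graded with degree-$0$ part $\mathbb C$. A graded Nakayama argument then shows that the positive-weight coordinates generate it, so $\phi|_{U^+}$ is a closed immersion. It is étale at the cone point, and both sides are connected cones, so it is an isomorphism. The downward flow $U^-$ is handled by the same argument applied to the inverse action $t\mapsto t^{-1}$, which exchanges $T^+$ and $T^-$. For $U^-$, the limit need not exist globally, but this does not matter: only points whose limit exists are considered.

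\emph{Main obstacle.} The hardest step is showing that $U^\pm_{[\mathbf V]}$ is entirely contained in one affine $\mathbb C^*$-stable chart, together with the graded Nakayama/étale-to-isomorphism argument in Step 3. My first approach is to cite Bia\l ynicki-Birula's theorem on smooth varieties directly, which gives $U^\pm_{[\mathbf V]}$ as a $\mathbb C^*$-equivariant affine space of dimension $\dim T^\pm$. I would then compare weights via $\phi$ to identify it with $T^\pm$ equivariantly.
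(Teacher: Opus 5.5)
The paper gives no argument of its own here. It imports the statement from Hausel--Hitchin, where it follows from Bia\l ynicki-Birula theory for smooth semiprojective varieties. So your fallback (check that $\mathcal M^\theta(\overline Q,\mathbf d)$ is smooth and that limits exist, then cite Bia\l ynicki-Birula) is the paper's route.

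Part of what you call the main obstacle is not one. Suppose $\lim_{t\to0}t\cdot x=[\mathbf V]\in U$. Then $t_0\cdot x\in U$ for some $t_0\neq0$, so $x\in U$ because $U$ is $\mathbb C^*$-stable. Hence $U^\pm_{[\mathbf V]}\subseteq U$ automatically, and no shrinking is needed (shrinking could only lose points).

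Your self-contained Step 3 has a genuine gap. The sentences ``under $\phi$ this corresponds to $T^+$'' and ``it is \'etale at the cone point'' are asserted, and they carry the whole content of the theorem. Put $I=(A_{<0},\mathfrak m\cap A_0)$, and let $J\subseteq I$ be the ideal generated by the coordinates in $W$ of weight $\le0$ (in your convention). Then $Z=V(I)\subseteq\phi^{-1}(T^+)$. Graded Nakayama gives only a closed immersion $Z\hookrightarrow T^+$ with $T_{[\mathbf V]}Z=T^+$. That does not rule out $Z$ being a proper or non-reduced stable cone, such as $V(x^2)\subset\mathbb A^1$. What you must show is that points near $[\mathbf V]$ whose image lies in $T^+$ really flow to $[\mathbf V]$, i.e.\ $IA_{\mathfrak m}=JA_{\mathfrak m}$.

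Here is one way to close it.
\begin{enumerate}
\item Work in $\hat A\cong\mathbb C[[w_1,\dots,w_n]]$, with $w_i$ a homogeneous basis of $W$. The action preserves every $\mathfrak m^N$, and monomials of degree $<N$ form a homogeneous basis of $A/\mathfrak m^N$.
\item Hence a homogeneous $f$ of weight $k$ expands only in monomials of weight $k$. If $k<0$, or $k=0$ and $f\in\mathfrak m$, each such monomial has a factor of weight $\le0$. Since ideals of $\hat A$ are $\mathfrak m$-adically closed, $f\in J\hat A$.
\item So $I\hat A=J\hat A$, and faithful flatness of $A_{\mathfrak m}\to\hat A$ gives $IA_{\mathfrak m}=JA_{\mathfrak m}$. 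Thus $Z$ agrees with $\phi^{-1}(T^+)$ near $[\mathbf V]$, and $Z\to T^+$ is \'etale there by base change.
\item A closed immersion \'etale at $0$ is open near $0$. The contracting action on $T^+$ and $\mathbb C^*$-stability of $Z$ then give $Z\supseteq T^+$ as sets. Reducedness of $T^+$ gives $Z=T^+$.
\end{enumerate}
With this inserted, your argument becomes a self-contained proof of what the paper only cites.
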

	
	Now, using a decomposition of the deformation complex in Equation (\ref{deformationComplexNakajimaQuiverVarieties}) we can compute explicitly the weight spaces in which the tangent space decomposes.  Let $[\mathbf{V}]=[(V,X,Y)]\in \mathcal{M}^\theta(\overline{Q},\mathbf{d})$ be a fixed point for the $\mathbb{C}^*$-action and let $$\mathfrak{gl}_k(\mathbf{d})=\bigoplus\limits_{\substack{i\in Q_0 \\ m-n=k}}\text{Hom}(V_{i,n},V_{i,m}).$$ The \emph{weight $k$ deformation complex} is:
	\begin{equation}\label{weightKDeformationComplexNakajimaQuiverVarieties}
		\overline{\mathcal{H}om}^\bullet_k(\mathbf{V}):\mathfrak{gl}_k(\mathbf{d})\xlongrightarrow{\delta_0^k}\bigoplus_{\substack{\alpha\in Q_1\\ m-n=k}}\text{Hom}(V_{t\alpha,n},V_{h\alpha,m})\oplus\bigoplus_{\substack{\alpha\in Q_1\\ m-n=k+1}}\text{Hom}(V_{t\overline{\alpha},n},V_{h\overline{\alpha},m})\xlongrightarrow{\delta_1^k}\mathfrak{gl}_{k+1}(\mathbf{d})
	\end{equation}
	where $\delta_0$ and $\delta_1$ are defined in the same way as for the complex in Equation (\ref{deformationComplexNakajimaQuiverVarieties}). As a matter of fact 
	\begin{equation}\label{splittingDeformationComplex}
	\overline{\mathcal{H}om}^\bullet(\mathbf{V})=\bigoplus_{k\in\mathbb{Z}}\overline{\mathcal{H}om}^\bullet_k(\mathbf{V}).
	\end{equation}
	A computation similar to that carried out in the proof of Lemma \ref{eulerCharacteristicDeformationComplexNakajimaQuiverVarieties} gives the following interesting identity:
	\begin{lemma}\label{eulerCharacteristicWeightedDeformationComplex}
		For $d^n=(d_{i,n})_{i\in Q_0}$,
		$$\chi(\overline{\mathcal{H}om}_k^\bullet(\mathbf{V}))=\sum_{m-n=k}\langle d^n,d^m\rangle_Q+\sum_{m-n=k+1}\langle d^m,d^n\rangle_Q$$ 
	\end{lemma}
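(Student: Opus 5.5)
The plan is to compute $\chi(\overline{\mathcal{H}om}_k^\bullet(\mathbf{V}))$ directly as the alternating sum of the dimensions of the three terms of the complex in Equation (\ref{weightKDeformationComplexNakajimaQuiverVarieties}). This is the same computation as in the proof of Proposition \ref{eulerCharacteristicDeformationComplexNakajimaQuiverVarieties}. There is one preliminary point to settle: $\overline{\mathcal{H}om}_k^\bullet(\mathbf{V})$ must genuinely be a complex. For this I will use the splitting in Equation (\ref{splittingDeformationComplex}). Because $X$ preserves the weight and $Y$ raises it by one (Theorem \ref{characterizationFixedPoints}), $\delta_0$ and $\delta_1$ map the weight $k$ pieces into the weight $k$ pieces (with the shift built into the indexing of the third term, $\mathfrak{gl}_{k+1}(\mathbf{d})$). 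Hence $\delta_1^k\circ\delta_0^k=0$ follows from $\delta_1\circ\delta_0=0$. For a bounded complex of finite-dimensional vector spaces, rank–nullity then gives
$$
\chi(\overline{\mathcal{H}om}_k^\bullet(\mathbf{V}))=\dim\mathfrak{gl}_k(\mathbf{d})-\dim C^1_k+\dim\mathfrak{gl}_{k+1}(\mathbf{d}),
$$
where $C^1_k$ denotes the middle term.

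Next I will write each term in terms of $d_{i,n}=\dim V_{i,n}$:
$$
\dim\mathfrak{gl}_k(\mathbf{d})=\sum_{m-n=k}\sum_{i\in Q_0}d_{i,n}d_{i,m},
\qquad
\dim\mathfrak{gl}_{k+1}(\mathbf{d})=\sum_{m-n=k+1}\sum_{i\in Q_0}d_{i,n}d_{i,m}.
$$
For the middle term I use $t\overline{\alpha}=h\alpha$ and $h\overline{\alpha}=t\alpha$, which gives
$$
\dim C^1_k=\sum_{m-n=k}\sum_{\alpha\in Q_1}d_{t\alpha,n}d_{h\alpha,m}+\sum_{m-n=k+1}\sum_{\alpha\in Q_1}d_{h\alpha,n}d_{t\alpha,m}.
$$

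Finally I will regroup the terms. The ordinary-arrow part of $C^1_k$ pairs with $\dim\mathfrak{gl}_k(\mathbf{d})$, which by the definition of the Euler form gives $\sum_{m-n=k}\langle d^n,d^m\rangle_Q$. The opposite-arrow part pairs with $\dim\mathfrak{gl}_{k+1}(\mathbf{d})$. To recognise the result as $\langle d^m,d^n\rangle_Q=\sum_i d_{i,m}d_{i,n}-\sum_\alpha d_{t\alpha,m}d_{h\alpha,n}$, I only need the symmetry $d_{i,n}d_{i,m}=d_{i,m}d_{i,n}$ in the vertex sum.

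The main (mild) obstacle is bookkeeping. In the opposite-arrow summand the roles of $n$ and $m$ are swapped relative to the ordinary arrows, which is why the second sum carries $\langle d^m,d^n\rangle_Q$ rather than $\langle d^n,d^m\rangle_Q$. I will also make sure the index ranges in the statement are read as $n,m\in\{1,\ldots,\ell\}$, with $d^n=0$ outside this range, so that all sums are finite.
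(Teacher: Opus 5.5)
Your proposal is correct and is essentially the paper's argument: the paper obtains this identity by the same alternating sum of the dimensions of the three terms of the weight-$k$ complex as in Proposition~\ref{eulerCharacteristicDeformationComplexNakajimaQuiverVarieties}. Your bookkeeping also checks out: the ordinary arrows pair with $\mathfrak{gl}_k(\mathbf{d})$ to give $\langle d^n,d^m\rangle_Q$, and the opposite arrows pair with $\mathfrak{gl}_{k+1}(\mathbf{d})$ to give $\langle d^m,d^n\rangle_Q$. Your check that the weight-$k$ piece is a genuine complex, so that the term-wise and cohomological Euler characteristics agree, is a point the paper leaves implicit.
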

	The reason the weight $k$ deformation complex is of interest to us is the following:
	\begin{proposition}
		Let $[\mathbf{V}]$ be a fixed point for the $\mathbb{C}^*$-action, then $T_{[\mathbf{V}]}\mathcal{M}^\theta(\overline{Q},\mathbf{d})_k\cong H^1(\overline{\mathcal{H}om}_{-k}^\bullet(\mathbf{V}))$
	\end{proposition}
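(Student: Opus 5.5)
The plan is to make the $\mathbb{C}^*$-action on $T_{[\mathbf{V}]}\mathcal{M}^\theta(\overline{Q},\mathbf{d})$ explicit through the deformation complex, and then read off the weights using the splitting in Equation (\ref{splittingDeformationComplex}).

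\emph{Step 1: the action on $\operatorname{Rep}(\overline{Q},\mathbf{d})$.} By Proposition \ref{cohomologiesDeformationComplexNakajimaQuiverVarieties}, $T_{[\mathbf{V}]}\mathcal{M}^\theta(\overline{Q},\mathbf{d})\cong H^1(\overline{\mathcal{H}om}^\bullet(\mathbf{V}))=\ker\delta_1/\operatorname{Im}\delta_0$. The scaling action $(X,Y)\mapsto (X,tY)$ does not fix the representative $\mathbf{V}$. I would therefore replace it by the composite action
\[
\Phi_t(\mathbf{W}) := g(t)^{-1}\cdot (t\cdot \mathbf{W}),
\]
where $g(t)$ is the one-parameter subgroup from Equation (\ref{oneParameterSubgroupTorusaction}); up to inversion, it acts by $t^n$ on $V_{i,n}$ as in Equation (\ref{automorphismRepresentationAdmitingDecompositionIsFixedPoint}). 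The map $\Phi_t$ fixes the point $\mathbf{V}\in\mu_{\mathbb{C}}^{-1}(0)^{\theta\text{-}s}$. Since $g(t)\in\operatorname{GL}(\mathbf{d})$, $\Phi_t$ induces the same map on the quotient as the original $\mathbb{C}^*$-action. Hence the induced action on $T_{[\mathbf{V}]}\mathcal{M}$ is the linear action $d\Phi_t$ on $\ker\delta_1$, passed to the quotient by $\operatorname{Im}\delta_0$.

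\emph{Step 2: weights of $d\Phi_t$.} A tangent vector $(\psi_\alpha,\psi_{\overline{\alpha}})$ is sent to
\[
\bigl(g_{h\alpha}(t)^{-1}\psi_\alpha g_{t\alpha}(t),\; t\, g_{t\alpha}(t)^{-1}\psi_{\overline{\alpha}} g_{h\alpha}(t)\bigr).
\]
On a component $\operatorname{Hom}(V_{t\alpha,n},V_{h\alpha,m})$ this is multiplication by $t^{n-m}$. On a component $\operatorname{Hom}(V_{h\alpha,n},V_{t\alpha,m})$ it is multiplication by $t^{1+n-m}$. The induced action $f\mapsto g(t)^{-1}fg(t)$ on $\mathfrak{gl}(\mathbf{d})$ acts on $\operatorname{Hom}(V_{i,n},V_{i,m})$ by $t^{n-m}$. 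It follows that the three terms of $\overline{\mathcal{H}om}^\bullet_{k}(\mathbf{V})$ are exactly the pieces on which $t$ acts with weight $-k$:
\begin{itemize}
\item on $\mathfrak{gl}_k$, the condition $m-n=k$ gives weight $-k$;
\item on the ordinary-arrow components with $m-n=k$, the weight is $-k$;
\item on the opposite-arrow components with $m-n=k+1$, the weight is $1-(k+1)=-k$;
\item on the last term $\mathfrak{gl}_{k+1}$, the moment map has weight one, so the twist $t\cdot$ cancels the extra weight and the total weight is again $-k$.
\end{itemize}
This matches the definition in Equation (\ref{weightKDeformationComplexNakajimaQuiverVarieties}).

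\emph{Step 3: equivariance of the differentials.} Next I would check that $\delta_0$ and $\delta_1$ are $\mathbb{C}^*$-equivariant, with $\mathbb{C}^*$ acting on the last $\mathfrak{gl}(\mathbf{d})$ term with an extra factor $t$. For $\delta_0$ this holds because it is the derivative of the $\operatorname{GL}(\mathbf{d})$-action, and $\Phi_t$ normalizes that action: conjugation by $g(t)$ preserves $\operatorname{GL}(\mathbf{d})$, and the scaling commutes with it. For $\delta_1$ it holds because $\mu_{\mathbb{C}}\circ\Phi_t=t\,g(t)^{-1}\mu_{\mathbb{C}}\,g(t)$. Consequently the splitting in Equation (\ref{splittingDeformationComplex}) is a splitting of complexes into weight spaces, and cohomology commutes with it:
\[
H^1(\overline{\mathcal{H}om}^\bullet(\mathbf{V}))_{-k}=H^1(\overline{\mathcal{H}om}^\bullet_k(\mathbf{V})).
\]
Relabelling $k\mapsto -k$ gives the statement.

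\emph{Main obstacle.} The key step is Step 1: justifying that the action induced on the tangent space of the quotient is computed by $d\Phi_t$ on $\ker\delta_1/\operatorname{Im}\delta_0$. I would argue this via the local slice description of the free $\operatorname{PGL}(\mathbf{d})$-quotient. Alternatively, one can use the universal family with the equivariant structure of Proposition \ref{equivariantStructureUniversalFamily}, which identifies the tangent space with first-order deformations equivariantly. Care is also needed with sign conventions: the convention $t\cdot$ versus $t^{-1}\cdot$ for $g(t)$ is exactly what produces the index $-k$ rather than $k$.
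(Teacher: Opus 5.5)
Your proposal is correct and follows essentially the same route as the paper, which also composes the scaling with $g(t)^{-1}$ to obtain a chain automorphism of $\overline{\mathcal{H}om}^\bullet(\mathbf{V})$ (with the extra factor $t$ on the last $\mathfrak{gl}(\mathbf{d})$), checks that it acts by $t^{-k}$ on $\overline{\mathcal{H}om}^\bullet_k(\mathbf{V})$, and concludes by additivity of cohomology over the splitting (\ref{splittingDeformationComplex}). Your Step 3 (equivariance of $\delta_0,\delta_1$) and your remark that Step 1 needs a slice or universal-family argument make explicit two points the paper takes for granted.
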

	\begin{proof}
		Let $t\in \mathbb{C}^*$ and $g(t)\in \text{GL}(\mathbf{d})$ the automorphism such that $t\cdot \mathbf{V}=g(t)\cdot\mathbf{V}$. Our problem at hand amounts to determining the weight $k$ piece of the following composition of morphisms of complexes:
		\begin{center}
			\begin{tikzcd}
				\overline{\mathcal{H}om}^\bullet(\mathbf{V}):\arrow[d]&\mathfrak{gl}(\mathbf{d})\arrow[d,"\text{Id}",']\arrow[r,"\delta_0"]&\text{Rep}(\overline{Q},\mathbf{d})\arrow[r,"\delta_1"]\arrow[d,"t\cdot\_",']&\mathfrak{gl}(\mathbf{d})\arrow[d,"t \text{Id}",'] \\
				\overline{\mathcal{H}om}^\bullet(t\cdot\mathbf{V}):\arrow[d]&\mathfrak{gl}(\mathbf{d})\arrow[d,"\text{Ad}(g(t)^{-1})",']\arrow[r,"\delta_0"]&\text{Rep}(\overline{Q},\mathbf{d})\arrow[r,"\delta_1"]\arrow[d,"g(t)^{-1}\cdot\_",']& \mathfrak{gl}(\mathbf{d})\arrow[d," \text{Ad}(g(t)^{-1})",']\\
				\overline{\mathcal{H}om}^\bullet(\mathbf{V}):&\mathfrak{gl}(\mathbf{d})\arrow[r,"\delta_0"]&\text{Rep}(\overline{Q},\mathbf{d})\arrow[r,"\delta_1"]&\mathfrak{gl}(\mathbf{d}).
			\end{tikzcd}
		\end{center}
		Now, looking at the restriction to the degree $k$ pieces of the morphisms in the first column, we get:
		$$
		\text{Ad}(g(t)^{-1})\circ\text{Id}(f_{i,n,m})_{\substack{i\in Q_0 \\ m-n=k}}=(g(t)_i^{-1}f_{i,n,m}g(t)_i)_{\substack{i\in Q_0 \\ m-n=k}}=(t^{n-m}f_{i,n,m})_{\substack{i\in Q_0\\m-n=k}}=t^{-k}(f_{i,n,m})_{\substack{i\in Q_0 \\ m-n=k}}.
		$$
		This is because for all $i\in Q_0$, $g(t)_i$ acts with weight $n$ on $V_{i,n}$ (see Equation (\ref{oneParameterSubgroupTorusaction}) and subsequent discussion). A similar computation gives the analogous result for the third column. Likewise, for the middle column:
		\begin{align*}
			&g(t)^{-1}\cdot(t\cdot ((X_{\alpha,n,m})_{\substack{\alpha\in Q_1\\m-n=k }},(Y_{\overline{\alpha},n,m})_{\substack{\alpha\in Q_1\\m-n=k+1 }}))\\&=((g(t)_{h\alpha}^{-1}X_{\alpha,n,m}g(t)_{t\alpha})_{\substack{\alpha\in Q_1\\m-n=k }},(g(t)_{t\alpha}^{-1}tY_{\overline{\alpha},n,m}g(t)_{h\alpha})_{\substack{\alpha\in Q_1\\m-n=k+1}})\\&=((t^{n-m}X_{\alpha,n,m})_{\substack{\alpha\in Q_1\\ m-n=k}},(t^{n-m+1}Y_{\overline{\alpha},n,m})_{\substack{\alpha\in Q_1\\ m-n=k+1}})=t^{-k}((X_{\alpha,n,m})_{\substack{\alpha\in Q_1\\m-n=k }},(Y_{\overline{\alpha},n,m})_{\substack{\alpha\in Q_1\\m-n=k+1 }}).
		\end{align*}
		Our preceding calculations imply that the induced action on
		$$
		H^1(
		\overline{\mathcal{H}om}^{\bullet}_k(\mathbf V))
		=
		\frac{\ker(\delta_1^k)}
		{\operatorname{Im}(\delta_0^k)}
		$$
		is also multiplication by $t^{-k}$.
		On the other hand, recall that the cohomology functor is additive
		\cite[Exercise 1.2.1]{WeibelHomologicalAlgebra} so Proposition \ref{cohomologiesDeformationComplexNakajimaQuiverVarieties} and Equation
		(\ref{splittingDeformationComplex}) give
		$$
		T_{[\mathbf{V}]}\mathcal{M}^{\theta}(\overline{Q},\mathbf{d})\cong H^1(\overline{\mathcal{H}om}^{\bullet}(\mathbf V))
		\cong
		\bigoplus_{k\in\mathbb Z}
		H^1(
		\overline{\mathcal{H}om}^{\bullet}_k(\mathbf V)).
		$$
		and hence the conclusion. 
	\end{proof}
	We end the section with the following identity: 
	\begin{proposition}\label{motivicBialynickiBirulaDecomposition}
	In $K_0(\operatorname{Var}_\mathbb{C})$:
	\begin{equation}
	[\mathcal{M}^{\theta}(\overline{Q},\mathbf{d})]=\sum_{\mathbf{d}^\ell\in \mathscr{D}_\theta(\mathbf{d})}[\mathcal{M}^{\theta^\ell}(Q^\ell,\mathbf{d}^\ell,\mathcal{R}^\ell)]\cdot \mathbb{L}^{N_{\mathbf{d}^\ell}},
	\end{equation}
	where $	\mathscr{D}_\theta(\mathbf{d})$ is as in Theorem \ref{quiverChainsClosedImmersion} and $N_{\mathbf{d}^\ell}$ is a nonnegative integer.
	\end{proposition}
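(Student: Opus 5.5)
We will deduce this directly from the Białynicki-Birula stratification of Section \ref{BialynickiBirulaStratification} together with the cut-and-paste relations in $K_0(\operatorname{Var}_\mathbb{C})$. The starting point is the decomposition
$$
\mathcal{M}^{\theta}(\overline{Q},\mathbf{d})=\bigsqcup_{j\in J}U_j^+
$$
into finitely many locally closed subschemes, $J$ being finite because the fixed locus is of finite type. Since $\mathcal{M}^\theta(\overline Q,\mathbf d)$ is smooth and the $\mathbb{C}^*$-action is semiprojective, this is a genuine stratification: one can order the $F_j$, for instance by the critical value of the moment map $F$, so that the unions of initial segments of strata are closed. Applying the scissor relation repeatedly then gives $[\mathcal{M}^{\theta}(\overline{Q},\mathbf{d})]=\sum_{j\in J}[U_j^+]$.

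Next we use that each limit map $U_j^+\to F_j$ is a Zariski locally trivial fibration with fibre an affine space. By Proposition \ref{upAndDownFlowsIsoWithWeightedTangentSpaces}, the fibre over $[\mathbf W]$ is $U^+_{[\mathbf{W}]}\cong T_{[\mathbf W]}\mathcal{M}^\theta(\overline{Q},\mathbf{d})^+$. Its dimension $N_j$ is constant on the connected component $F_j$, because the weight spaces of the tangent bundle restricted to $F_j$ form subbundles of locally constant rank. Concretely, $N_j=\sum_{k>0}\dim H^1(\overline{\mathcal{H}om}^\bullet_{-k}(\mathbf W))$ by the last proposition of the previous subsection. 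A Zariski locally trivial $\mathbb{A}^{N_j}$-fibration has class $[F_j]\mathbb{L}^{N_j}$: refine a trivializing open cover to a finite stratification of the base by locally closed pieces over which the fibration is a product. Hence
$$
[\mathcal{M}^{\theta}(\overline{Q},\mathbf{d})]=\sum_{j}[F_j]\,\mathbb{L}^{N_j}.
$$

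Finally we regroup by quiver chains using Theorem \ref{quiverChainsClosedImmersion}. Each $F_j$ is a connected component of a unique $X_{\mathbf d^\ell}$, and each $X_{\mathbf d^\ell}$ is the disjoint union of the $F_j$ it contains. To obtain a single exponent $N_{\mathbf d^\ell}$, we show that $N_j$ depends only on $\mathbf d^\ell$. Since $H^0$ and $H^2$ of the total complex are each one-dimensional, and both are concentrated in weight $0$ (the $H^0$ is the scalars, and $H^2$ is its dual via the pairing between weight $k$ and weight $-k-1$ pieces), the dimension of $H^1(\overline{\mathcal{H}om}^\bullet_k(\mathbf W))$ for $k\neq 0,-1$ equals $-\chi(\overline{\mathcal{H}om}^\bullet_k(\mathbf W))$. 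By Lemma \ref{eulerCharacteristicWeightedDeformationComplex}, this Euler characteristic depends only on $\mathbf d^\ell$. The $k=-1$ piece needs a separate check, using the identification $H^0_{-1}\cong (H^2_{0})^\vee=0$ coming from the duality between weights $k$ and $-k-1$. Summing the resulting terms gives a formula for $N_{\mathbf d^\ell}$ in terms of Euler forms alone.

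I expect the main obstacle to be this bookkeeping of weights. One has to keep the sign conventions straight ($T_k\cong H^1_{-k}$) and verify which weight pieces carry $H^0$ and $H^2$. A fallback is available: the statement only needs some nonnegative integers, so one could write the sum over connected components and note that $N_j$ is constant on each $F_j$. However, the stated form requires constancy across all components of a given $X_{\mathbf d^\ell}$, which is exactly what the Euler-characteristic computation supplies. Nonnegativity is automatic, since $N_{\mathbf d^\ell}$ is the dimension of a vector space.
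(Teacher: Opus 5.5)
Your argument is correct and is essentially the paper's: the paper obtains the identity directly from the Bia\l ynicki-Birula stratification and the fibration rule in $K_0(\operatorname{Var}_\mathbb{C})$. Its subsequent remark shows that $N_{\mathbf d^\ell}$ depends only on $\mathbf d^\ell$ by the same weighted Euler-characteristic count, and alternatively, via the weight-one symplectic form, that $N_{\mathbf d^\ell}=1-\langle\mathbf d,\mathbf d\rangle_Q$ for every component. One small bookkeeping slip: the one-dimensional $H^2$ is not in weight $0$ but in the weight $-1$ complex, $H^2(\overline{\mathcal{H}om}^\bullet_{-1}(\mathbf W))\cong H^0(\overline{\mathcal{H}om}^\bullet_{0}(\mathbf W))^\vee\cong\mathbb C$, so the relevant $k=-1$ check is $\dim H^1(\overline{\mathcal{H}om}^\bullet_{-1}(\mathbf W))=1-\chi(\overline{\mathcal{H}om}^\bullet_{-1}(\mathbf W))$, which gives $N_{\mathbf d^\ell}=1-\sum_{k<0}\chi(\overline{\mathcal{H}om}^\bullet_k(\mathbf W))$; this is still independent of the point, so your conclusion stands.
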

	\begin{proof}
	This is a direct consequence of the Bia\l ynicki-Birula decomposition of the Nakajima quiver variety $\mathcal{M}^{\theta}(\overline{Q},\mathbf{d})$ and Section \ref{GrothendieckRingOfVarieties}. 

	\end{proof}
	\begin{remark}
	 We can compute explicitly $N_{\mathbf{d}^\ell}$. Indeed, because of Proposition \ref{upAndDownFlowsIsoWithWeightedTangentSpaces}, 
$$
N_{\mathbf{d}^\ell}=\dim(T_{[\mathbf{V}]}\mathcal{M}^{\theta}(\overline{Q},\mathbf{d})^+)=\sum_{k> 0}\dim(H^1(\overline{\mathcal{H}om}_{-k}^\bullet(\mathbf{V})))
$$
where $[\mathbf{V}]\in \mathcal{M}^{\theta}(\overline{Q},\mathbf{d})$ is a fixed point for the $\mathbb{C}^*$-action lying in the component $\mathcal{M}^{\theta^\ell}(Q^\ell,\mathbf{d}^\ell,\mathcal{R}^\ell)$. 
Because of the stability condition 
$$
H^0(\overline{\mathcal{H}om}_{k}^\bullet(\mathbf{V}))=\begin{cases}
	\mathbb{C} & \text{ if }k=0,\\
	0 & \text{otherwise}. 
\end{cases}
$$
Finally, one can use the trace pairing in Equation (\ref{tracePairing}) to conclude that 
$$
\overline{\mathcal{H}om}_{k}^\bullet(\mathbf{V})^\vee = \overline{\mathcal{H}om}_{-k-1}^\bullet(\mathbf{V})
$$
and therefore 
\begin{equation}\label{dualityWeightedCohomologyGroups}
H^j(\overline{\mathcal{H}om}_{k}^\bullet(\mathbf{V}))^\vee\cong H^{2-j}(\overline{\mathcal{H}om}_{-k-1}^\bullet(\mathbf{V})), \ j=0,1,2.
\end{equation}
Taking $j=1$ and using Lemma \ref{eulerCharacteristicWeightedDeformationComplex} we get
$$
N_{\mathbf{d}^\ell}=1-\sum_{k<0}\left(\sum_{m-n=k}\langle d^n,d^m\rangle_Q+\sum_{m-n=k+1}\langle d^m,d^n\rangle_Q\right).
$$
	\end{remark}

\begin{remark}
There is yet another way of computing the value of $N_{\mathbf{d}^\ell}$ using holomorphic symplectic form on $T^*\operatorname{Rep}(Q,\mathbf{d})\cong \operatorname{Rep}(\overline{Q},\mathbf{d})$ is given by 
$$
\omega_\mathbb{C}((X,Y),(X',Y'))=\sum_{\alpha\in Q_1}\operatorname{tr}(X_{\alpha}Y'_{\overline{\alpha}}-X'_{\alpha}Y_{\overline{\alpha}}).
$$
This descends to the moduli space $\mathcal{M}^{\theta\text{-}s}(\overline{Q},\mathbf{d})$ and an easy computation shows that $\mathbb{C}^*$ acts with weight one on it. By this we mean that 
\begin{equation}\label{scalingSymplecticForm}
t^*(\omega_{\mathbb{C}})=t\omega_{\mathbb{C}}.
\end{equation}
The condition in Equation (\ref{scalingSymplecticForm}) means, for $[V]$ a fixed point for the $\mathbb{C}^*$-action, that $\omega_{\mathbb{C}}(v,w)=0$ for $v\in T_{[\mathbf{V}]}\mathcal{M}^\theta(\overline{Q},\mathbf{d})_k$ and $w\in T_{[\mathbf{V}]}\mathcal{M}^\theta(\overline{Q},\mathbf{d})_l$ whenever $k+l\neq1$. If $k+l=1$ we recover the duality in Equation (\ref{dualityWeightedCohomologyGroups}) in degree one:
$$
T_{[\mathbf{V}]}\mathcal{M}^\theta(\overline{Q},\mathbf{d})_k^\vee\cong T_{[\mathbf{V}]}\mathcal{M}^\theta(\overline{Q},\mathbf{d})_{1-k}.
$$
Then $(\bigoplus_{k>1}T_{[\mathbf{V}]}\mathcal{M}^\theta(\overline{Q},\mathbf{d})_k)^\vee\cong T_{[\mathbf{V}]}\mathcal{M}^\theta(\overline{Q},\mathbf{d})^-$ and
\begin{align*}
\dim(\mathcal{M}^\theta(\overline{Q},\mathbf{d}))&=\dim(T_{[\mathbf{V}]}\mathcal{M}^\theta(\overline{Q},\mathbf{d})^+)+\dim(T_{[\mathbf{V}]}\mathcal{M}^\theta(\overline{Q},\mathbf{d})_0)+\dim(T_{[\mathbf{V}]}\mathcal{M}^\theta(\overline{Q},\mathbf{d})^-)\\
&=2\dim(T_{[\mathbf{V}]}\mathcal{M}^\theta(\overline{Q},\mathbf{d})^+)
\end{align*}
Therefore, the variety $U_{[\mathbf{V}]}^+$ is Lagrangian and 
$$
N_{\mathbf{d}^\ell}=\frac{1}{2}\dim(\mathcal{M}^\theta(\overline{Q},\mathbf{d}))=1-\langle\mathbf{d},\mathbf{d}\rangle_Q
$$
showing that the dimensions of the Zariski-locally trivial fibrations in which the moduli space $\mathcal{M}^\theta(\overline{Q},\mathbf{d})$ stratifies do not depend at all on the moduli space of quiver chains serving as base to these.  Using a similar technique and using the description in Equation \ref{decompositionNilpotentCone} we can also show that the nilpotent cone $\mathcal{N}$ is Lagrangian subvariety.
\end{remark}

	\section{The case of the star-shaped quiver}\label{theCaseOfTheStarShapedQuiver}
	In this section we deal with the case of the so-called \emph{star-shaped quiver:}
	\vspace{-1mm}	
	\begin{center}
		\begin{tikzcd}[scale cd=0.85,row sep=large, column sep=normal]
			& \mathbb{C}\arrow[dl,dashed,no head,bend right=30]\arrow[dr,dashed,no head,bend left=30]\arrow[d,bend left = 15]&  \\
			\mathbb{C}\arrow[r,bend left=15]& \mathbb{C}^r\arrow[u,bend left = 15]\arrow[d,bend left=15]\arrow[l,bend left=15] \arrow[r,bend left=15]&\mathbb{C}. \arrow[l,bend left=15]  \\
			& \mathbb{C}\arrow[u, bend left=15]\arrow[ur,dashed,no head,bend right=30]\arrow[ul,dashed,no head,bend left=30]& 
		\end{tikzcd}
	\end{center}
	with $n$ branches. We will assume that $n> r$.
	The stability parameter that we will be considering is $\theta=(\theta_0:=\sum_{i=1}^n\theta_i,-r\theta_1,\ldots,-r\theta_n)$ where $\theta_i$ is a positive integer. Following Theorem \ref{characterizationFixedPoints} and its subsequent remark, the fixed points for the $\mathbb{C}^*$-action on the moduli space are given by quiver chains which in this case are representations
	\begin{center}
		\begin{tikzcd}[scale cd=0.6] 
			&&&\ldots\arrow[to=3-3]\arrow[to=3-5,dashed,no head,bend left=25]\arrow[to=2-1,dashed,no head,bend right=25]\arrow[orange,from=5-3,bend right=30,crossing over]&\\
			\mathbb{C}\arrow[to=3-3,"X_{\ell,n_\ell}"]&&&&\\
			&&\mathbb{C}^{r_\ell}&&\mathbb{C}\arrow[to=3-3,crossing over,"X_{\ell,1}"]\\
			\cdots \arrow[to=5-5,dashed,no head,bend left=45]\arrow[to=5-3,bend right=5]&&&&\\
			&&\vdots \arrow[orange,to=3-5,bend right=20,crossing over,"Y_{\ell-1,1}",']\arrow[orange,to=2-1,bend left=10, crossing over,"Y_{\ell-1,n_\ell}",']&& \cdots\arrow[to=5-3] \\
			&&\mathbb{C}^{r_2}\arrow[orange,to=4-1,bend left = 20]\arrow[orange,to=5-5,bend right = 20]&&\\
			\mathbb{C}\arrow[to=8-3,dashed,no head,bend right=15]\arrow[to=6-3,"X_{2,1}"]&&&& \\
			&&\ldots\arrow[to=6-3]\arrow[to=8-4,dashed,no head,bend right=5]&\mathbb{C}\arrow[to=6-3,"X_{2,n_2}",']&\\
			&&&&\mathbb{C}\arrow[to=10-3,"X_{1,n_1}"]\\
			\mathbb{C}\arrow[to=11-4,dashed,no head,bend right=25]\arrow[to=10-3,"X_{1,1}"]\arrow[to=9-5,dashed,no head,bend left=25]&&\mathbb{C}^{r_1}\arrow[orange,to=8-3,bend left=10,crossing over]\arrow[orange,to=8-4,bend right=15,crossing over,,"Y_{1,n_2}",']\arrow[orange,to=7-1,bend left=25, crossing over,"Y_{1,1}"]&& \\
			&&&\ddots\arrow[to=10-3]\arrow[to=9-5,dashed,no head,bend right=25]& 
		\end{tikzcd}
	\end{center}
	for which $\ell$ is the number of ``levels'' of the chain, $r_1+\ldots+r_\ell=r$ and $n_1+\ldots+n_\ell=n$ with $n_k=|I_k|$ the number of branches in level $k$ of the chain and $I_1\sqcup\cdots \sqcup I_\ell=[n]$. Moreover, recall that the fixed points are representations with relations which in this case are given by
	\begin{equation}\label{relationsFixedPointsStarShapedQuiver}
		\sum_{j\in I_{k}}X_{k,j}Y_{k-1,j}=0
	\end{equation}
	for all $k=2,\ldots,\ell$ and that we will measure stability of these chains with respect to the stability parameter $\theta^\ell$ as defined in Proposition \ref{semistableRepresentationIffSemistableChain}. 
	\begin{remark}
	Recall that for $k=1,\ldots,\ell$, we have $(r_k,n_k)\neq(0,0)$, this is just Remark \ref{noEmptyLevels} rewritten for the star-shaped situation. This will be important for the next proposition. 
	\end{remark}
	\begin{remark}\label{anotherPointOfViewStarShapedChains}
		In this context, a quiver chain can also be seen as a tuple $(A_1,B_1,\ldots ,B_{\ell-1},A_\ell)$ where $A_k:\mathbb{C}^{n_k}\to \mathbb{C}^{r_k}$, $B_{k}:\mathbb{C}^{r_{k}}\to \mathbb{C}^{n_{k+1}}$ are linear maps such that $A_{k+1}B_k=0$ for all $k=1,\ldots,\ell-1$. Notice that the latter relation comes from the complex moment map condition in Equation (\ref{relationsFixedPointsStarShapedQuiver}).
	\end{remark}
	Using the real moment map (see Equation (\ref{realMomentMapDef})) we can be more precise describing these fixed points:
	\begin{proposition}\label{propertiesFixedPointsStarShapedQuiver}
		Under the previous notations and hypotheses:
		\begin{enumerate}
			\item $r_k>0$ for all $k=1,\ldots,\ell$.
			\item $n_k>0$ for all $k=1,\ldots,\ell$.
			\item $X_{k,j}\neq 0$  for all $k=1,\ldots,\ell$ and $j\in I_k$.
			\item $\sum_{j\in I_{k+1}}\|Y_{k,j}\|^2=r\sum_{j\in I_1\sqcup\cdots\sqcup I_k}\theta_j-\theta_0\sum_{j=1}^{k}r_j$ for all $k=1,\ldots,\ell$. Moreover, for all $k=1,\ldots,\ell-1$, the norm in the left-hand side is strictly positive. 
		\end{enumerate}
	\end{proposition}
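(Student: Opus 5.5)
The plan is to work entirely with the real moment map, as the paper did just before Theorem~\ref{quiverChainsClosedImmersion}. Choose a representative $\mathbf V$ of the fixed point in $\mu_{\mathbb R}^{-1}(\theta)\cap\mu_{\mathbb C}^{-1}(0)$, which is possible by the identification of the hyperk\"ahler and GIT quotients in Section~\ref{GITDescriptionNakajimaQuiverVarieties}. For this representative, the one-parameter subgroup realizing the $\mathbb S^1$-action is unitary, so the weight spaces $V_{i,n}$ are mutually orthogonal.

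This justifies restricting $\mu_{\mathbb R}(\mathbf V)_i=\theta_i\operatorname{Id}$ level by level, exactly as in the displayed formula for $\mu_{\mathbb R}(\mathbf V)_{i,n}$. I regard this orthogonality as the only slightly delicate point. If needed, I would prove it by averaging over $\mathbb S^1$ and using that the $\operatorname{PU}(\mathbf d)$-stabilizer is trivial.

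For the star-shaped quiver, write $X_{k,j}:\mathbb C\to\mathbb C^{r_k}$ and $Y_{k,j}:\mathbb C^{r_k}\to\mathbb C$ for $j\in I_{k+1}$. Since each arrow has the branch vertex as tail and the centre as head, the level-$k$ equations read
\[
\sum_{j\in I_k}X_{k,j}X_{k,j}^*-\sum_{j\in I_{k+1}}Y_{k,j}^*Y_{k,j}=\theta_0\operatorname{Id}_{\mathbb C^{r_k}},
\]
and, for $j\in I_k$,
\[
Y_{k-1,j}Y_{k-1,j}^*-X_{k,j}^*X_{k,j}=-r\theta_j .
\]
The second equation is scalar, so it gives $\|X_{k,j}\|^2=\|Y_{k-1,j}\|^2+r\theta_j\geq r\theta_j>0$, which is (3).

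For (1), suppose $r_k=0$. Then every $X_{k,j}$ with $j\in I_k$ vanishes, so (3) forces $I_k=\varnothing$. This contradicts $(r_k,n_k)\neq(0,0)$ from Remark~\ref{noEmptyLevels}.

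For (2), suppose $n_k=0$. The centre equation at level $k$ then says that $-\sum_j Y_{k,j}^*Y_{k,j}$, a negative semidefinite operator, equals $\theta_0\operatorname{Id}$ on $\mathbb C^{r_k}$. Here $\theta_0>0$ and, by (1), $r_k>0$, which is impossible.

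For (4), take traces of the centre equation and of the branch equations at each level $m\leq k$, sum over $m=1,\dots,k$, and add the two results. The $\|X\|^2$ terms cancel, and the $\|Y\|^2$ terms telescope. The only survivor is $-\sum_{j\in I_{k+1}}\|Y_{k,j}\|^2$, coming from the top level, which equals $\theta_0\sum_{m\leq k}r_m-r\sum_{j\in I_1\sqcup\cdots\sqcup I_k}\theta_j$; this is the claimed identity. For $k=\ell$ both sides vanish because $\theta\cdot\mathbf d=0$. Strict positivity for $k\leq\ell-1$ is the last sentence of Remark~\ref{noEmptyLevels}: not all $Y_{\overline\alpha,k}$ can vanish, since otherwise $\mathbf V$ would decompose, contradicting stability.
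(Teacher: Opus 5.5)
Your proof is correct and follows the paper's argument: you use the level-by-level real moment map equations at the branch and central vertices, and traces with the telescoping recursion for (4). As in the paper, $(r_k,n_k)\neq(0,0)$ and Remark~\ref{noEmptyLevels} supply the contradictions and the strict positivity. The differences are cosmetic: you prove (3) first and deduce (1) from it, and you use semidefiniteness rather than a trace in (2). You also justify that the weight spaces can be taken orthogonal, which the paper uses without comment when it restricts $\mu_{\mathbb R}$ to the $V_{i,n}$.
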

	\begin{proof}
		We start with the first item and we suppose that $r_k=0$. For $k=1,\ldots,\ell$ and $j\in I_k$, the real moment map equation in the branches of level $k$ reads $\|Y_{k-1,j}\|^2-\|X_{k,j}\|^2=\|Y_{k-1,j}\|^2=\theta_{k,j}=-r\theta_j< 0$ which is a contradiction. For (2) assume that $n_k=0$ then the real moment map condition in the central vertex of level $k$ implies that $-\sum_{j\in I_{k+1}}Y_{k,j}^*Y_{k,j}=\theta_{0,k}\text{Id}_{r_k}=\theta_0\text{Id}_{r_k}$. Taking the trace on both sides of the equation we get that $-\sum_{j\in I_{k+1}}\|Y_{k,j}\|^2=r_k\theta_0>0$ yielding a contradiction. For the third item of the proposition we argue similarly as with (1). For item (4) we write the moment map equation for the center vertex in level $k$ of the fixed point: $\sum_{j\in I_k}X_{k,j}X_{k,j}^*-\sum_{j\in I_{k+1}}Y_{k,j}^*Y_{k,j}=\theta_{0,k}\text{Id}_{r_k}$. After taking traces and reorganizing the terms we get $$\sum_{j\in I_{k+1}}\|Y_{k,j}\|^2=\sum_{j\in I_k}\|X_{k,j}\|^2-\theta_{0,k}r_k.$$ Using the moment map for each one of the branches in the level $k$ we obtain that $\sum_{j\in I_k}\|X_{k,j}\|^2=\sum_{j\in I_k}\|Y_{k-1,j}\|^2-\theta_{k,j}$ so that 
		$$
		\sum_{j\in I_{k+1}}\|Y_{k,j}\|^2=\sum_{j\in I_k}\|Y_{k-1,j}\|^2-\sum_{j\in I_k}\theta_{k,j}-\theta_{0,k}r_k.
		$$
		If we keep going recursively we thus obtain the desired formula
		$$
		\sum_{j\in I_{k+1}}\|Y_{k,j}\|^2=r\sum_{j\in I_1\sqcup\cdots\sqcup I_k}\theta_j-\theta_0\sum_{j=1}^{k}r_j.
		$$
		Finally, the fact that the norm is strictly positive for $k=1,\ldots,\ell-1$, follows from Remark \ref{noEmptyLevels}.
	\end{proof}
	\begin{remark}
		Equality (4) is particularly useful because it gives bounds on the dimensions of the vector spaces at the central vertices, as well as on the number of branches at each level. In particular, for all $k=1,\ldots,\ell-1$ we have that 
		$$
		\frac{n_1+\ldots+n_k}{r_1+\ldots+r_k}> \frac{n}{rL_k}
		$$
		where $L_k=\max\limits_{j\in I_1\sqcup\ldots\sqcup I_{k}}\theta_j$.
	\end{remark}
	\subsection{Classification of all type $(1,\ldots,1)$ fixed points/quiver chains}
	The goal of this section is to classify all type $(1,\ldots,1)$ quiver chains, that is, those quiver chains for which $\ell=r$, or equivalently, representations of $Q^\ell$ of dimension vector $\mathbf{d}^\ell$ such that the rank at the center vertex in each level is one. Our first task will be to describe the moduli space $\mathcal{M}^{\theta^\ell}(Q^\ell,\mathbf{d}^\ell)$.  This is given by (see, for instance, \cite[p.597]{ReinekeSurvey}):
	\begin{equation}\label{definitionModuliSpace}
		\mathcal{M}^{\theta^\ell}(Q^\ell,\mathbf{d}^\ell)=\text{Proj}(\bigoplus_{\eta\geq 0}\mathcal{O}_{V}^{\chi^\eta_{\theta^\ell}})
	\end{equation}
	for $V=\text{Rep}(Q^\ell,\mathbf{d}^\ell)$ the affine space parameterising all $\mathbf{d}^\ell$-dimensional representations,
	$$
	\mathcal{O}_V^{\chi^\eta_{\theta^\ell}}=\{f\in \mathcal{O}_V\cong \mathbb{C}[X_{k,j},Y_{k-1,j}]_{\substack{k=1,\ldots,\ell\\j\in I_k}}|f(t\cdot (X,Y))=\chi^\eta_{\theta^\ell}(t)f(X,Y)\ \forall\ (X,Y)\in V,\ t\in \text{GL}(\mathbf{d}^\ell)\}
	$$
	and $\chi_{\theta^\ell}:\text{GL}(\mathbf{d}^\ell)\to \mathbb{G}_m$ the character given by 
	$$
	\chi_{\theta^\ell}(t)=\prod_{k=1}^{\ell}\frac{t_{k,0}^{\theta_0}}{\prod_{j\in I_k}t_{k,j}^{r\theta_j}}.
	$$
	Now, notice that the graded ring in Equation (\ref{definitionModuliSpace}) is generated by the monomials
	$$
	x_{s}:=\prod_{j\in I_1}X_{1,j}^{r\theta_j}\prod_{j\in I_2}Y_{1,j}^{s_{2,j}}X_{2,j}^{r\theta_j+s_{2,j}}\cdots\prod_{j\in I_\ell}Y_{\ell-1,j}^{s_{\ell,j}}X_{\ell,j}^{r\theta_j+s_{\ell,j}}
	$$
	where $s=(s_{k,j})_{\substack{k=2,\ldots,\ell \\ j\in I_k}}\in \mathbb{N}^{n_2+\ldots+n_\ell}$ is such that 
	\begin{equation}\label{equationExponentsFixedPoint}
		\sum_{j\in I_k}s_{k,j}=\sum_{j\in I_1\sqcup\ldots\sqcup I_{k-1}}r\theta_j-(k-1)\theta_0:=\alpha_k
	\end{equation}
	for all  $k=2,\ldots,\ell$. A priori, it is not clear if this set of equalities can hold. However, equality (4) in Proposition \ref{propertiesFixedPointsStarShapedQuiver} implies that the right hand side in Equation (\ref{equationExponentsFixedPoint}) is positive for all $k=2,\ldots,\ell$ and hence solutions to it do exist. As a matter of fact, for a fixed $k$, there will be 
	$
	N_k:=$ $ {\alpha_k+n_k-1}\choose{n_k-1}$
	solutions, which is the number of ways we can write the right hand side in Equation (\ref{equationExponentsFixedPoint}) as a sum of $n_k$ natural numbers. We denote the set of solutions $s\in \mathbb{N}^{n-n_1}$ to the equalities in Equation (\ref{equationExponentsFixedPoint}) as $\mathscr{A}$.
	\\
	\\
	The key observation is that $\mathcal{M}^{\theta^\ell}(Q^\ell,\mathbf{d}^\ell)$ is the toric variety associated to the monomial map determined by the finite set of lattice points $\mathscr{A}\subseteq \mathbb{Z}^{n-n_1}$ and given by:
	$$
	\begin{matrix}
		\Phi_\mathscr{A}:(\mathbb{C}^*)^{n_1+2n_2+\ldots+2n_{\ell-1}+2n_\ell}&\longrightarrow&\mathbb{P}_\mathbb{C}^{|\mathscr{A}|-1} \\ (X_{k,j},Y_{k-1,j}) & \longmapsto & (x_s)_{s\in \mathscr{A}}.
	\end{matrix} 
	$$ 
	The general theory of toric varieties tells us then that the toric ideal defining $\mathcal{M}^{\theta^\ell}(Q^\ell,\mathbf{d}^\ell)$ is
	$
	\mathcal{I}_{\mathscr{A}}=\langle x^\alpha-x^\beta\ | \ \alpha,\beta\in\mathbb{N}^{|\mathscr{A}|}\text{ and } \alpha-\beta\in L\rangle
	$
	where $L$ fits in the exact sequence
	\vspace{-1mm}
	\begin{center}
		\begin{tikzcd}[column sep =normal, row sep =normal]
			0\arrow[r]&L\arrow[r,""]& \mathbb{Z}^{|\mathscr{A}|} \arrow[r] & \mathbb{Z}^{n-n_1}.
		\end{tikzcd}
	\end{center}
	The ideal $\mathcal{I}_{\mathscr{A}}$ is essentially encoding the relations between the variables $x_s$ for all $s\in\mathscr{A}$. It is, however, difficult to give a more concrete description of $\mathcal{M}^{\theta^\ell}(Q^\ell,\mathbf{d}^\ell)$ from this ideal so we instead describe the lattice polytope associated to this toric variety. This is 
	$$
	P=\underbrace{\{(r\theta_j)_{j\in I_1}\}}_{P_1}\times\underbrace{(\iota_2(\alpha_2\Delta_{{n_2-1}})+v_2)}_{P_2}\times\cdots\times \underbrace{(\iota_\ell(\alpha_\ell\Delta_{{n_\ell-1}})+v_\ell)}_{P_\ell}
	$$
	where $v_k=(0,(r\theta_j)_{j\in I_k})$, $\Delta_{m}=\{(a_0,a_1,a_2,\ldots,a_{m})\in \mathbb{R}^{m+1}|\sum_{j=0}^{m}a_j=1,\ a_j\geq0\}$ is the standard $m$-simplex and $\iota_k:\mathbb{R}^{n_k}\hookrightarrow\mathbb{R}^{n_k}\times \mathbb{R}^{n_k}$ is the diagonal map. Now, we use the fact that the toric variety associated to a product of very ample polytopes is the product of the projective toric varieties associated to each factor to conclude that $\mathcal{M}^{\theta^\ell}(Q^\ell,\mathbf{d}^\ell)\cong X_{P_1}\times \ldots \times X_{P_\ell}$. Here $X_{P_k}$ is the toric variety associated to the polytope $P_k$. In particular:
	$$
	X_{P_k}\cong \begin{cases}
		\{\text{Point}\} & \text{ for }k=1,\\
		\mathbb{P}_{\mathbb{C}}^{n_k-1} \text{embedded in }\mathbb{P}_{\mathbb{C}}^{N_k-1} \text{ via the line bundle }\mathcal{O}_{\mathbb{P}_\mathbb{C}^{n_k-1}}(\alpha_k) & \text{ for }k=2,\ldots,\ell.
	\end{cases}
	$$
	So $\mathcal{M}^{\theta^\ell}(Q^\ell,\mathbf{d}^\ell)$ is the projective variety inside $\mathbb{P}_{\mathbb{C}}^{|\mathscr{A}|-1}$ given by the Segre embedding of $\ell-1$ projective spaces, each embedded in higher dimensional projective spaces via the Veronese embedding of degree $\alpha_k$. 
	\\
	\\
	Finally, we need to cut $\mathcal{M}^{\theta^\ell}(Q^\ell,\mathbf{d}^\ell)$ by the hypersurfaces defined by the relations described in Equation (\ref{relationsFixedPointsStarShapedQuiver}) to obtain $\mathcal{M}^{\theta^\ell}(Q^\ell,\mathbf{d}^\ell,\mathcal{R}^\ell)$ which is the desired moduli space of type $(1,\ldots,1)$ quiver chains. For each $k=2,\ldots,\ell$, the relation in Equation (\ref{relationsFixedPointsStarShapedQuiver}) describes a linear hyperplane in the corresponding projective space $\mathbb{P}_\mathbb{C}^{n_k-1}$ yielding the following theorem:
	\begin{theorem}\label{classificationType1...1QuiverChains}
	Suppose $\ell=r$, then 
	$$
	\mathcal{M}^{\theta^\ell}(Q^\ell,\mathbf{d}^\ell,\mathcal{R}^\ell)\cong 	\mathbb{P}^{n_2-2}_{\mathbb{C}}\times \cdots \times \mathbb{P}^{n_\ell-2}_{\mathbb{C}}.
	$$	
	In particular, if $n_k<2$ for some $k=2,\ldots,\ell$ then $	\mathcal{M}^{\theta^\ell}(Q^\ell,\mathbf{d}^\ell,\mathcal{R}^\ell)=\varnothing$. 
	\end{theorem}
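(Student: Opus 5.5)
The plan is to build on the toric description just obtained. That description gives $\mathcal{M}^{\theta^\ell}(Q^\ell,\mathbf{d}^\ell)\cong X_{P_1}\times\cdots\times X_{P_\ell}$, with $X_{P_1}$ a point and $X_{P_k}\cong\mathbb{P}^{n_k-1}_{\mathbb{C}}$ for $k\ge 2$. It then remains to identify the closed subscheme cut out by the relations $\mathcal{R}^\ell$. The first step is to make the factor $X_{P_k}$ explicit. For $k\ge 2$, I claim its homogeneous coordinates can be taken to be the $n_k$ products $z_{k,j}:=X_{k,j}Y_{k-1,j}$, $j\in I_k$. Indeed, the exponent constraints (\ref{equationExponentsFixedPoint}) force every generating monomial $x_s$ to factor as
\[
\prod_{j\in I_1}X_{1,j}^{r\theta_j}\cdot\prod_{k=2}^{\ell}\Big(\prod_{j\in I_k}X_{k,j}^{r\theta_j}\Big)\Big(\prod_{j\in I_k}z_{k,j}^{s_{k,j}}\Big),
\]
with $\sum_{j\in I_k}s_{k,j}=\alpha_k$. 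The factors $\prod_j X_{k,j}^{r\theta_j}$ do not vanish on the stable locus, since $X_{k,j}\neq 0$ by Proposition \ref{propertiesFixedPointsStarShapedQuiver}(3). So, up to a nowhere-vanishing common factor, the $k$-th block consists of all degree-$\alpha_k$ monomials in $z_{k,\bullet}$. This is exactly the Veronese embedding $\mathbb{P}^{n_k-1}\hookrightarrow\mathbb{P}^{N_k-1}$ via $\mathcal{O}(\alpha_k)$ described above, in the coordinates $[z_{k,j}]_{j\in I_k}$.

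As a cross-check, and to make the identification concrete, I would redo this by a direct slice argument. On the stable locus every $X_{k,j}$ is nonzero, so the branch factors $t_{k,j}$ of $\operatorname{GL}(\mathbf{d}^\ell)=(\mathbb{C}^*)^{\ell+n}$ can be used to normalise $X_{k,j}=1$. The residual group consists of the $t_{k,0}$, with $t_{k,j}=t_{k,0}$ for $j\in I_k$, modulo the diagonal. It acts on the vector $B_{k-1}=(Y_{k-1,j})_{j\in I_k}$ by the independent scalar $t_{k,0}/t_{k-1,0}$. Stability forces $B_{k-1}\neq 0$: this is the strict positivity in Proposition \ref{propertiesFixedPointsStarShapedQuiver}(4), or equivalently Remark \ref{noEmptyLevels}, since $n_k$ and $r_k$ are both positive. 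Hence the quotient is $\prod_{k\ge2}\mathbb{P}(\mathbb{C}^{n_k})$, and after the normalisation the coordinates agree with $z_{k,j}$.

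Next I impose the relations. With $r_k=1$, the relation (\ref{relationsFixedPointsStarShapedQuiver}) at level $k$ reads $\sum_{j\in I_k}X_{k,j}Y_{k-1,j}=\sum_{j\in I_k}z_{k,j}=0$. This is a single linear form in the homogeneous coordinates of the $k$-th factor and involves no other factor. Since $\operatorname{Rep}(Q^\ell,\mathbf{d}^\ell,\mathcal{R}^\ell)^{\theta^\ell\text{-}s}$ is the closed subscheme of the stable locus cut out by these equations, and the quotient map is a principal bundle for the free action of $\operatorname{GL}(\mathbf{d}^\ell)$ modulo its diagonal, the image is the subscheme of the product defined by the descended equations. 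So
\[
\mathcal{M}^{\theta^\ell}(Q^\ell,\mathbf{d}^\ell,\mathcal{R}^\ell)\cong\prod_{k=2}^{\ell}\{z\in\mathbb{P}^{n_k-1}_{\mathbb{C}} : z_{k,1}+\cdots+z_{k,n_k}=0\}\cong\prod_{k=2}^{\ell}\mathbb{P}^{n_k-2}_{\mathbb{C}},
\]
and the $k=1$ factor is a point. If $n_k=1$ for some $k\ge 2$, the hyperplane $z=0$ in $\mathbb{P}^0$ is empty, which gives the last assertion. The case $n_k=0$ is already excluded by Proposition \ref{propertiesFixedPointsStarShapedQuiver}(2).

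I expect the main obstacle to be the scheme-theoretic compatibility in the last step. The relation is an equation on $\operatorname{Rep}(Q^\ell,\mathbf{d}^\ell)$ rather than on the Proj, so I must check that it descends to the ideal of a reduced hyperplane: it is semi-invariant, of weight $t_{k,0}t_{k-1,0}^{-1}$, and linear in $z_{k,\bullet}$. I must also check that taking GIT quotients commutes with passing to this closed subscheme. I would handle both using the free action on the stable locus, where all $X_{k,j}$ and all $B_{k-1}$ are nonzero. On each affine chart $\{z_{k,j_0}\neq0\}$ the quotient is an explicit affine space, and the relation becomes a coordinate hyperplane $z_{k,j_0}=-\sum_{j\neq j_0}z_{k,j}$, which is clearly smooth and reduced.
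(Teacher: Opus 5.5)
Your proposal is correct and follows essentially the paper's route. The relation-free quotient is identified with the Segre--Veronese toric variety $\prod_{k\geq 2}\mathbb{P}^{n_k-1}_{\mathbb{C}}$, and each factor is then cut by the hyperplane coming from $\sum_{j\in I_k}X_{k,j}Y_{k-1,j}=0$. Your explicit homogeneous coordinates $z_{k,j}=X_{k,j}Y_{k-1,j}$ make clear why that relation is linear on the $k$-th factor, which the paper simply asserts. One thing to note: your slice cross-check, like the paper's toric description, needs $\alpha_k>0$ for all $k$ (the paper's use of Proposition~\ref{propertiesFixedPointsStarShapedQuiver}(4)). That is what guarantees that every point with all $X_{k,j}\neq 0$ and $B_{k-1}\neq 0$ is actually stable, not just that stable points have this form.
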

	
	\subsection{Motivic class of higher rank fixed point types}\label{motivicClassHigherRankStarShapedQuiver}
	An explicit description of the fixed points for higher rank quiver chains, as in the previous section, is no longer possible. We are able, however, to compute their classes in the Grothendieck ring of varieties. Some useful facts regarding the construction of this ring and the motivic class of some varieties necessary for the computation are recollected in Section \ref{GrothendieckRingOfVarieties}.
	\\
	\\
	To compute the motivic class of the locus of $\theta$-stable quiver chains we use the following motivic analogue of a result due to Fei \cite[Lemma 3.5]{FeiQuiversWithRelations} which in turn is based on Reineke's seminal work \cite{ReinekePointCounting} in which he uses point counting techniques to compute the Betti numbers of moduli spaces of quiver representations without relations.  Following the conventions from Sections \ref{quiverFlagVarieties} and \ref{GrothendieckRingOfVarieties} we have:
	\begin{lemma}\label{MotivicFeiIdentity}
		In $K_0^{\operatorname{loc}}(\operatorname{Var}_{\mathbb{C}})$, 
		$$
		[\mathcal{M}^{\theta^\ell}(Q^\ell,\mathbf{d}^\ell,\mathcal{R}^\ell)]=\frac{\mathbb{L}-1}{[\operatorname{GL}(\mathbf{d}^\ell)]}\sum_{*}(-1)^{s-1}[{\normalfont\text{Frep}_{\mathbf{d}^\ell_1\cdots\mathbf{d}^\ell_s}}(Q^\ell,\mathcal{R}^\ell)]
		$$
		where the sum runs over all decompositions $\mathbf{d}^\ell_1+\cdots+\mathbf{d}^\ell_s=\mathbf{d}^\ell$ of $\mathbf{d}^\ell$ into non-zero dimension vectors such that $\mu_{\theta^\ell}(\sum_{j=1}^k\mathbf{d}^\ell_j)>0$ for all $k=1,\ldots,s-1$.
	\end{lemma}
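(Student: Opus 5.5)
The plan is to run Reineke's Harder--Narasimhan recursion in the Grothendieck ring and then invert it. The recursion becomes an alternating sum over ordered decompositions via a Möbius-type (inclusion--exclusion) argument.

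\emph{Step 1 (HN stratification).} Write $R_{\mathbf{e}}:=\operatorname{Rep}(Q^\ell,\mathbf{e},\mathcal{R}^\ell)$ and let $R_{\mathbf{e}}^{\text{sst}}$ be its $\theta^\ell$-semistable locus. Every representation with relations has a unique Harder--Narasimhan filtration with respect to $\mu_{\theta^\ell}$. This gives a finite stratification of $R_{\mathbf{e}}$ by locally closed $\operatorname{GL}(\mathbf{e})$-stable strata $R_{\mathbf{e}}^{HN}(\mathbf{e}_1,\dots,\mathbf{e}_s)$ indexed by HN types, i.e.\ decompositions with strictly decreasing slopes.

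The natural way to handle the relations is not to describe strata as vector bundles over products of semistable loci, since $\operatorname{Ext}$-dimensions jump in the presence of relations. Instead I would work with the varieties $\operatorname{Frep}_{\mathbf{e}_1\cdots\mathbf{e}_s}(Q^\ell,\mathcal{R}^\ell)$ of pairs (representation, flag of subrepresentations of dimension types $\mathbf{e}_1,\mathbf{e}_1+\mathbf{e}_2,\dots$), as in Section~\ref{quiverFlagVarieties}.

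\emph{Step 2 (the identity in the motivic Hall algebra).} Following Fei's argument, and using the motivic Hall algebra formalism described in the Appendix, I would establish
\[
\frac{[\operatorname{Frep}_{\mathbf{e}_1\cdots\mathbf{e}_s}]}{[\operatorname{GL}(\mathbf{e})]}=\prod_j\frac{[R_{\mathbf{e}_j}]}{[\operatorname{GL}(\mathbf{e}_j)]}.
\]
Here the product is the Hall product of the stack functions $1_{\mathfrak{M}_{\mathbf{e}_j}}$ pushed forward to $\mathfrak{M}_{\mathbf{e}}$. This is a statement about stacks of flags and does not require any smoothness. The HN recursion then reads
\[
1_{\mathfrak{M}_{\mathbf{e}}}=\sum_{\text{HN types}} 1^{\text{sst}}_{\mathbf{e}_1}*\cdots*1^{\text{sst}}_{\mathbf{e}_s}.
\]
Uniqueness of the HN filtration is what makes each stratum isomorphic, as a stack, to the corresponding Hall product.

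Inverting this triangular system in the usual way (Reineke's inversion) expresses $1^{\text{sst}}_{\mathbf{d}^\ell}$ as
\[
1^{\text{sst}}_{\mathbf{d}^\ell}=\sum(-1)^{s-1}\,1_{\mathbf{d}_1}*\cdots*1_{\mathbf{d}_s},
\]
where the sum is over decompositions whose partial sums satisfy $\mu_{\theta^\ell}(\sum_{j\le k}\mathbf{d}_j)>0=\mu_{\theta^\ell}(\mathbf{d}^\ell)$. The combinatorial check is Reineke's lemma on this inversion. Applying the integration map, which is legitimate in the localized ring, gives
\[
\frac{[R^{\text{sst}}_{\mathbf{d}^\ell}]}{[\operatorname{GL}(\mathbf{d}^\ell)]}=\frac{1}{[\operatorname{GL}(\mathbf{d}^\ell)]}\sum(-1)^{s-1}[\operatorname{Frep}_{\mathbf{d}_1\cdots\mathbf{d}_s}].
\]

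\emph{Step 3 (passing to the moduli space).} Since $\theta^\ell$ is generic and $\mathbf{d}^\ell$ is indivisible, semistability equals stability, and $R^{s}\to\mathcal{M}^{\theta^\ell}$ is a principal $\operatorname{PGL}(\mathbf{d}^\ell)$-bundle. This bundle is Zariski-locally trivial, because the universal family exists (as discussed for quiver chains after Proposition~\ref{semistableRepresentationIffSemistableChain}) and $\operatorname{GL}$ is special. Hence
\[
[R^s]=[\mathcal{M}]\cdot[\operatorname{GL}(\mathbf{d}^\ell)]/(\mathbb{L}-1),
\]
using Section~\ref{GrothendieckRingOfVarieties} and the lemmas on Zariski-locally trivial fibrations in the Appendix. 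Substituting this into Step 2 yields the formula.

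\emph{Main obstacle.} I expect the hard part to be Step 2. One must make rigorous that, with relations, the HN strata correspond exactly to Hall products of semistable stack functions. One must also justify the inversion in $K_0^{\operatorname{loc}}$, where the classes $[\operatorname{GL}(\mathbf{e})]$ must be invertible. I would handle this by working throughout in Joyce's motivic Hall algebra of the abelian category of $Q^\ell$-representations satisfying $\mathcal{R}^\ell$, which is a finite-length category with finitely many relevant dimension vectors.
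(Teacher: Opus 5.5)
Your proposal is correct and follows the paper's route. The paper also uses Bridgeland's identification of $\delta_{\mathbf d_1}*\cdots*\delta_{\mathbf d_s}$ with $[\operatorname{Frep}_{\mathbf d_\bullet}(Q^\ell,\mathcal R^\ell)/\operatorname{GL}(\mathbf d^\ell)]$, inverts the Harder--Narasimhan recursion, and pushes forward to $K_0(\operatorname{St}/\mathbb C)\simeq K_0^{\operatorname{loc}}(\operatorname{Var}_{\mathbb C})$. Your Step~3, Zariski-local triviality of $\operatorname{Rep}^{s}\to\mathcal M$ via the universal family, is just a concrete rephrasing of the paper's neutral $\mathbb G_m$-gerbe argument.

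One caution: the display $[\operatorname{Frep}_{\mathbf e_1\cdots\mathbf e_s}]/[\operatorname{GL}(\mathbf e)]=\prod_j[R_{\mathbf e_j}]/[\operatorname{GL}(\mathbf e_j)]$ is valid only as notation for the Hall product, not as an identity in $K_0^{\operatorname{loc}}$. Pushforward to a point is not multiplicative: even without relations there is an Euler-form twist $\mathbb L^{-\sum_{j<k}\langle\mathbf e_k,\mathbf e_j\rangle}$. Since you only push forward the final inverted identity, with $[\operatorname{Frep}]$ appearing directly, this does not affect your argument.
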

	\begin{proof}
	The derivation of this identity uses motivic Hall algebras and is carried out in the Appendix, Section \ref{motivicHallAlgebrasSection}. 
	\end{proof}
	\begin{remark}
	One can determine all the decompositions $\mathbf{d}_1^\ell+\cdots+\mathbf{d}_s^\ell=\mathbf{d}^\ell$ satisfying the conditions in the theorem computationally as follows: consider a directed graph with set of vertices all the dimension vectors $\mathbf{e}<\mathbf{d}^\ell$ (see Section \ref{hyperKReduction} for an explanation of this notation) such that $\mu_{\theta^\ell}(\mathbf{e})>0$ plus the zero dimension vector and $\mathbf{d}^\ell$. In this graph there will be an arrow from $\mathbf{e}_1$ to $\mathbf{e}_2$ if $\mathbf{e}_1<\mathbf{e}_2$. Finding all the desired decompositions is then equivalent to compute all the paths in the graph from the zero vertex to the vertex corresponding to $\mathbf{d}^\ell$. 
	\end{remark}
	So our task will be to compute the motivic class of the quiver flag varieties $\text{Frep}_{\mathbf{d}^\ell_1\cdots\mathbf{d}^\ell_s}(Q^\ell,\mathcal{R}^\ell)$. 
	Fix an admissible decomposition $\mathbf{d}_1^\ell+\cdots+\mathbf{d}_s^\ell=\mathbf{d}^\ell$ where $\mathbf{d}_j^\ell=((\varepsilon_{b,j,k})_{b\in I_k},\beta_{j,k})_{k=1}^\ell$, $\varepsilon_{b,j,k}\in\{0,1\} $ depending on whether the branch $b\in I_k$ is added at that step $j$ of the filtration or not (so that in particular $\sum_{j=1}^{s}\varepsilon_{b,j,k}=1$). Notice also that $\sum_{b\in I_k}\varepsilon_{b,j,k}=\alpha_{j,k}$, $\sum_{j=1}^s\alpha_{j,k}=n_k$ and $\sum_{j=1}^s\beta_{j,k}=r_k$. Following the notation from Section \ref{quiverFlagVarieties} we denote
	$$
	\operatorname{Fl}_{\mathbf{d}^\ell_\bullet }=
	\prod_{k=1}^{\ell}(
	\{\operatorname{Coord}_{\bullet,k}^{\varepsilon}\}
	\times
	\operatorname{Fl}_{\beta_{\bullet,k}}(\mathbb C^{r_k})).
	$$
	Here $\operatorname{Coord}_{\bullet,k}^{\varepsilon}$ is a fixed coordinate flag given by the data
	$$
	\operatorname{Coord}_{p,k}^{\varepsilon}=\langle e_b \ | \ \sum_{j=1}^p\varepsilon_{b,j,k}=1,\ b\in I_k \rangle,\ p=1,\ldots,s, 
	$$
	and $\operatorname{Fl}_{\beta_{\bullet,k}}(\mathbb C^{r_k})$ denotes the flag variety of type $\beta_{\bullet,k}$ (see Section \ref{flagVarietiesSection}).  
	\\
	\\
	In Section \ref{quiverFlagVarieties} we claimed that the map $\text{Frep}_{\mathbf{d}^\ell_1\cdots\mathbf{d}^\ell_s}(Q^\ell,\mathcal{R}^\ell)\overset{p_2}{\longrightarrow}\operatorname{Fl}_{\mathbf{d}^\ell_\bullet }$ is a Zariski-locally trivial fibration. We will use this explicitly for the star-shaped quiver case we are looking at and this will lead the way to compute the desired motivic class. There are some points, however, that need to be justified. The first one is why in the base $\operatorname{Fl}_{\mathbf{d}^\ell_\bullet }$ we have coordinate flags. In Remark \ref{anotherPointOfViewStarShapedChains} we pointed out that one can see a quiver chain as a tuple of matrices whose product is zero. This point of view is helpful for the motivic computation that we want to carry out. But there is one caveat,  the matrix description identifies the direct sum of the one-dimensional branch spaces at level $k$ with $\mathbb C^{n_k}$, but the branches remain distinct vertices. A subrepresentation therefore selects either \(0\) or the entire space at each branch vertex. Under this identification, selecting a branch  is the same as choosing a coordinate subspace of $\mathbb C^{n_k}$, and a filtration by subrepresentations must give a coordinate flag branchwise. Arbitrary flags, such as one containing a diagonal subspace cannot correspond to branchwise subrepresentations. In contrast, the central space $\mathbb C^{r_k}$ belongs to a single vertex and admits arbitrary flags.
	\\
	\\
	We denote by
	$$
	0\subseteq \mathcal F^{n_k}_1\subseteq\cdots\subseteq
	\mathcal F^{n_k}_s=
	\mathbb C^{n_k}\times\operatorname{Fl}_{\mathbf{d}^\ell_\bullet },\ 
	0\subseteq \mathcal F^{r_k}_1\subseteq\cdots\subseteq
	\mathcal F^{r_k}_s=
	\mathbb C^{r_k}\times\operatorname{Fl}_{\mathbf{d}^\ell_\bullet }
	$$
	the pullbacks to $\operatorname{Fl}_{\mathbf{d}^\ell_\bullet }$ of the tautological flags (see Equation (\ref{tautologicalFlagOverFlagVariety})) on each one of the factors of $\operatorname{Fl}_{\mathbf{d}^\ell_\bullet}$. Notice that $\operatorname{rk}(\mathcal{F}_j^{n_k})=\sum_{n=1}^j\alpha_{n,k}$ and $\operatorname{rk}(\mathcal{F}_j^{r_k})=\sum_{n=1}^j\beta_{n,k}$. For $k=2,\ldots,\ell$, fix a rank profile
	\begin{equation}\label{rankProfileDef}
	\Gamma_{\bullet,k}=(\gamma_{1,k},\ldots,\gamma_{s,k}), \  \Gamma_{j,k}:=\gamma_{1,k}+\cdots+\gamma_{j,k}\text{ and } 0\leq \gamma_{j,k}\leq \min\{\alpha_{j,k},\operatorname{rk}(\mathcal{F}_j^{r_k})-\Gamma_{j-1,k}\}.
	\end{equation}
	Define the relative rank-profile stratum
	$$
	\mathcal{H}_{\Gamma_{\bullet,k}}:=
	\mathcal H om_{\mathcal F\ell}^{\Gamma_{\bullet,k}}
	(\mathcal F^{n_k}_\bullet,\mathcal F^{r_k}_\bullet)
	\longrightarrow \operatorname{Fl}_{\mathbf{d}^\ell_\bullet }
	$$
	which we know to be a Zariski-locally trivial fibration by Lemma \ref{usefulLemmaInvolvingVBAndZariskiLTFibrations}. 
	A point of $\mathcal{H}_{\Gamma_{\bullet,k}}$ consists of the coordinate flag
	$\operatorname{Coord}_{\bullet,k}^{\varepsilon}\subseteq \mathbb{C}^{n_k}$ and a flag $ W_{\bullet,k}\subseteq \mathbb C^{r_k},$
	together with a filtration-preserving morphism $A_k:\mathbb C^{n_k}\to \mathbb C^{r_k}$ such that
	$A_k(\operatorname{Coord}_{j,k}^{\varepsilon})\subseteq W_{j,k}, \ \operatorname{rk}(A_k|_{\operatorname{Coord}_{j,k}^{\varepsilon}})=\Gamma_{j,k}$ for every $j$. Choosing the $A_k$, for $k=1,\ldots,\ell$, simultaneously is the same as considering 
	$$
	\mathcal{H}_{\Gamma_\bullet}
	:=
	\mathcal{H}om_{\mathcal{F}\ell}(\mathcal{F}^{n_1}_{\bullet},\mathcal{F}^{r_1}_\bullet)
	\times_{\operatorname{Fl}_{\mathbf{d}^\ell_\bullet }}
	\mathcal{H}_{\Gamma_{\bullet,2}}
	\times_{\operatorname{Fl}_{\mathbf{d}^\ell_\bullet }}\cdots\times_{\operatorname{Fl}_{\mathbf{d}^\ell_\bullet }}
	\mathcal{H}_{\Gamma_{\bullet,\ell}}.
	$$
	Let $p:\mathcal{H}_{\Gamma_\bullet}\to \operatorname{Fl}_{\mathbf{d}^\ell_\bullet }$
	be the natural projection. For all $k=2,\ldots,\ell$ there are tautological morphisms
	$$
	\mathcal A_k:
	p^*\mathcal F^{n_k}_s
	\longrightarrow
	p^*\mathcal F^{r_k}_s
	$$
	whose restriction to $p^*\mathcal F^{n_k}_j$
	has constant rank \(\Gamma_{j,k}\). Thus, the kernel
	$$
	\mathcal K_{j,k}:=
	\ker\left(\mathcal A_k|_{p^*\mathcal F^{n_k}_j}:p^*\mathcal F^{n_k}_j\longrightarrow
	p^*\mathcal F^{r_k}_j\right)
	$$
	is a vector subbundle of $p^*\mathcal F^{n_k}_j$ and for every
	$k$, we obtain a filtration of vector bundles
	$$
	0\subseteq \mathcal K_{1,k}\subseteq\cdots\subseteq \mathcal K_{s,k}\subseteq p^*\mathcal F^{n_k}_s.
	$$
	The relation in the quiver chain is $A_{k+1}B_k=0$ for $k=1,\ldots,\ell-1$. Therefore, after the maps $A_{k+1}$ have been chosen, the possible maps
	$B_k:\mathbb C^{r_k}\to \mathbb C^{n_{k+1}}$
	are precisely the filtration-preserving morphisms whose image lies in the kernel filtration of $A_{k+1}$. Thus, over $\mathcal{H}_{\Gamma_\bullet}$, the choices of $B_k$ are parameterised by the vector bundle
	$\mathcal H om_{\mathcal F\ell}\left(p^*\mathcal F^{r_k}_\bullet,\mathcal K_{\bullet,k+1}\right)$.
	Consequently, the stratum of
	$\operatorname{Frep}_{\mathbf d^\ell_1\cdots\mathbf d^\ell_s}
	(Q^\ell,\mathcal R^\ell)$ with rank profile $\Gamma_\bullet$ is the total
	space of the vector bundle
	$$
	\bigoplus_{k=1}^{\ell-1}
	\mathcal H om_{\mathcal F\ell}
	\left(
	p^*\mathcal F^{r_k}_\bullet,
	\mathcal K_{\bullet,k+1}
	\right)
	\longrightarrow
	\mathcal{H}_{\Gamma_\bullet}.
	$$
	We denote this stratum by $\operatorname{Frep}^{\Gamma_\bullet}_{\mathbf{d}^\ell_\bullet}$. 
	\\
	\\
	Putting all together, a point of $\operatorname{Frep}^{\Gamma_\bullet}_{\mathbf{d}^\ell_\bullet}$ consists of flags
	$$
	\operatorname{Coord}_{\bullet,k}^{\varepsilon}\subseteq \mathbb C^{n_k},
	\ W_{\bullet,k}\subseteq \mathbb C^{r_k},
	$$
	linear maps
	$$
	A_k:\mathbb C^{n_k}\to \mathbb C^{r_k},\ A_k(\operatorname{Coord}_{j,k}^{\varepsilon})\subseteq W_{j,k}, \ k=1,\ldots,\ell,
	$$
	$$
	B_k:\mathbb C^{r_k}\to \mathbb C^{n_{k+1}}, \ B_k(W_{j,k})\subseteq \ker(A_{k+1}|_{\operatorname{Coord}_{j,k+1}^{\varepsilon}}), \ k=1,\ldots,\ell-1
	$$
	where
	$$
	\operatorname{rk}(A_k|_{\operatorname{Coord}_{j,k}^{\varepsilon}})=\Gamma_{j,k}, \ k=2,\ldots,\ell,
	$$
	for every $j=1,\ldots,s$. Notice that the second condition means, in particular, that $A_{k+1}B_k=0$. 
	\\
	\\
	Thus, by Lemma \ref{computationsMotivicClasses}, in the Grothendieck ring of varieties $K_0(\text{Var}_{\mathbb{C}})$,
	$$
	[\operatorname{Frep}^{\Gamma_\bullet}_{\mathbf{d}^\ell_\bullet}]
	=
	[\mathcal{H}_{\Gamma_\bullet}]\cdot
	\mathbb L^{N_{\Gamma_\bullet}},
	$$
	where
	$$
	N_{\Gamma_\bullet}=\sum_{k=1}^{\ell-1}
	\operatorname{rk}
	(\mathcal H om_{\mathcal F\ell}
	\left(
	p^*\mathcal F^{r_k}_\bullet,
	\mathcal K_{\bullet,k+1}
	\right))=\sum_{k=1}^{\ell-1}\sum_{j=1}^{s}\beta_{j,k}\left(\sum_{i=1}^{j}\alpha_{i,k+1}-\Gamma_{j,k+1}\right).
	$$
	Since there is a stratification by locally closed subsets:
	$$
	\operatorname{Frep}_{\mathbf d^\ell_1\cdots\mathbf d^\ell_s}
	(Q^\ell,\mathcal R^\ell)
	=
	\bigsqcup_{\Gamma_\bullet}
	\operatorname{Frep}^{\Gamma_\bullet}_{\mathbf{d}_\bullet^\ell},
	$$
	we obtain in $K_0(\operatorname{Var}_\mathbb{C})$ that
	$$
	[
	\operatorname{Frep}_{\mathbf d^\ell_1\cdots\mathbf d^\ell_s}
	(Q^\ell,\mathcal R^\ell)
	]
	=
	\sum_{\Gamma_\bullet}
	[\operatorname{Frep}^{\Gamma_\bullet}_{\mathbf{d}^\ell_\bullet}]=\sum_{\Gamma_\bullet}[\mathcal{H}_{\Gamma_\bullet}]\cdot
	\mathbb L^{N_{\Gamma_\bullet}}
	$$
	where $\Gamma_\bullet$ ranges over all rank profiles
	$\Gamma_\bullet=(\Gamma_{j,k})$ for $\Gamma_{j,k}$ as in Equation \ref{rankProfileDef}. So it only remains to compute the motivic class of the varieties $\mathcal{H}_{\Gamma_\bullet}$. 
	\\
	\\
	By Lemma \ref{fiberProductOfZariskiLocallyTrivialFibrationsIsZariskiLocallyTrivial} we have that 
	$$
	[\mathcal{H}_{\Gamma_\bullet}]=[\operatorname{Fl}_{\mathbf{d}^\ell_\bullet }]\cdot [F_1]\cdots[F_\ell]
	$$
	where $F_1$ is the typical fiber of the vector bundle $\mathcal{H}om_{\mathcal{F}\ell}(\mathcal{F}^{n_1}_{\bullet},\mathcal{F}^{r_1}_\bullet)\to \operatorname{Fl}_{\mathbf{d}^\ell_\bullet }$ and $F_k$ is that of the Zariski-locally trivial fibration $\mathcal{H}_{\Gamma_{\bullet,k}} \to \operatorname{Fl}_{\mathbf{d}^\ell_\bullet }$ for $k=2,\ldots,\ell$. For $k=1$ we have
	$$
	[F_1]=\mathbb{L}^{N_1}, \ N_1=\sum_{j=1}^{s}\alpha_{j,1}\sum_{i=1}^j\beta_{i,1}
	$$ Indeed, the typical fibers $F_k$ correspond to the variety described in Equation (\ref{varietyOfRankProfileFilteredMorphisms}) for which we compute now its motivic class. 
	\begin{lemma}
		Let
		$$
		0\subseteq V_{1}\subseteq V_{2}\subseteq\cdots\subseteq  V_{s}=\mathbb{C}^{n}, \ \dim V_{j}=\sum_{i=1}^{j}\alpha_{i},
		$$
		$$
		0\subseteq W_{1}\subseteq W_{2}\subseteq\cdots\subseteq  W_{s}=\mathbb{C}^{r}, \ \dim W_{j}=\sum_{i=1}^{j}\beta_{i}.
		$$ 
		be flags.
		Fix a rank profile $\Gamma=(\gamma_1,\ldots,\gamma_s)$, $\Gamma_k=\sum_{j=1}^k\gamma_j$, and let 
		$$
		F=\left\{
		\varphi:\mathbb{C}^n\to \mathbb{C}^r
		\ | \
		\varphi(V_k)\subseteq W_k
		\text{ and }
		\operatorname{rk}(\varphi|_{V_k})=\Gamma_k
		\text{ for every }k
		\right\}.
		$$
		Then 
		$$
		[F]=\prod_{k=1}^s[\operatorname{GL}(\gamma_k)]\mathbb{L}^{\alpha_{k}\Gamma_{k-1}}[\operatorname{Gr}(\gamma_k,\dim W_k-\Gamma_{k-1})][\operatorname{Gr}(\alpha_k-\gamma_k,\alpha_k)].
		$$
	\end{lemma}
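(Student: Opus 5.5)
The plan is to build $\varphi$ one step of the flag at a time and realise $F$ as the top of a tower of Zariski-locally trivial fibrations $F=F^{(s)}\to F^{(s-1)}\to\cdots\to F^{(1)}\to F^{(0)}=\{\text{pt}\}$. Here
$$
F^{(k)}=\{\varphi_k:V_k\to\mathbb C^r \mid \varphi_k(V_j)\subseteq W_j,\ \operatorname{rk}(\varphi_k|_{V_j})=\Gamma_j \text{ for } j\le k\},
$$
and the maps are restriction from $V_k$ to $V_{k-1}$. Since $V_s=\mathbb C^n$, we have $F^{(s)}=F$. Multiplicativity of classes along Zariski-locally trivial fibrations (Section \ref{GrothendieckRingOfVarieties}) then gives $[F]=\prod_k[\text{fibre of }F^{(k)}\to F^{(k-1)}]$. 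So it suffices to show that the $k$-th fibre has class $[\operatorname{GL}(\gamma_k)]\mathbb L^{\alpha_k\Gamma_{k-1}}[\operatorname{Gr}(\gamma_k,\dim W_k-\Gamma_{k-1})][\operatorname{Gr}(\alpha_k-\gamma_k,\alpha_k)]$.

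To describe the fibre, fix once and for all a complement $C_k$ of $V_{k-1}$ in $V_k$, so $\dim C_k=\alpha_k$. Given $\varphi_{k-1}\in F^{(k-1)}$, an extension $\varphi_k$ is the same as a linear map $\psi:C_k\to W_k$. Put $U=\varphi_{k-1}(V_{k-1})\subseteq W_{k-1}\subseteq W_k$, which has dimension $\Gamma_{k-1}$. Then $\varphi_k(V_k)=U+\psi(C_k)$, so the rank condition says exactly that the composite $\bar\psi:C_k\to W_k/U$ has rank $\gamma_k$. Choosing a complement $U'$ of $U$ in $W_k$, we can write $\psi=\bar\psi+\psi_U$ with $\psi_U\in\operatorname{Hom}(C_k,U)$ arbitrary. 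The fibre is therefore
$$
\operatorname{Hom}(C_k,U)\times\{\bar\psi:C_k\to U' \mid \operatorname{rk}\bar\psi=\gamma_k\},
$$
where $\dim\operatorname{Hom}(C_k,U)=\alpha_k\Gamma_{k-1}$ and $\dim U'=\dim W_k-\Gamma_{k-1}$.

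The class of the space of rank-$\gamma$ maps $\mathbb C^\alpha\to\mathbb C^m$ comes from the map sending such a map to its pair (kernel, image) in $\operatorname{Gr}(\alpha-\gamma,\alpha)\times\operatorname{Gr}(\gamma,m)$. The fibre over a pair is the space of isomorphisms $\mathbb C^\alpha/\ker\to\operatorname{im}$, a $\operatorname{GL}(\gamma)$-torsor. This torsor is Zariski-locally trivial, being the frame bundle of $\operatorname{Hom}(\mathcal Q,\mathcal S)$ for tautological bundles, and $\operatorname{GL}$ is special. Hence the class is $[\operatorname{GL}(\gamma)][\operatorname{Gr}(\gamma,m)][\operatorname{Gr}(\alpha-\gamma,\alpha)]$, matching the $k$-th factor of the statement.

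I expect the main obstacle to be the Zariski-local triviality of $F^{(k)}\to F^{(k-1)}$, because $U$ varies with $\varphi_{k-1}$. I would handle it as follows. The assignment $\varphi_{k-1}\mapsto U$ is a morphism $F^{(k-1)}\to\operatorname{Gr}(\Gamma_{k-1},W_k)$, since $\varphi_{k-1}$ has constant rank, so its image is a subbundle. Over a standard affine open of this Grassmannian, where a fixed coordinate subspace $U'_0$ is complementary to $U$, the decomposition $W_k=U\oplus U'_0$ varies algebraically. This gives an isomorphism of the fibration with $\operatorname{Hom}(C_k,U)\times\{\operatorname{rk}=\gamma_k \text{ maps } C_k\to U'_0\}$, and trivialising the rank-$\Gamma_{k-1}$ bundle $U$ on a further open cover makes this a product. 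Pulling back this cover to $F^{(k-1)}$ gives the required local trivialisation; alternatively one can invoke Lemma \ref{usefulLemmaInvolvingVBAndZariskiLTFibrations} in the same spirit. Finally, the constraint $0\le\gamma_k\le\min\{\alpha_k,\dim W_k-\Gamma_{k-1}\}$ guarantees that each fibre is nonempty whenever the Grassmannians are; otherwise both sides vanish.
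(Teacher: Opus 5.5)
Your proposal is correct and follows essentially the paper's route: a tower of Zariski-locally trivial fibrations built one flag step at a time, whose $k$-th fibre is an affine space of dimension $\alpha_k\Gamma_{k-1}$ times the rank-$\gamma_k$ maps into $W_k$ modulo the image of the previous stage, followed by the kernel--image fibration with $\operatorname{GL}(\gamma_k)$ fibres from Lemma \ref{computationsMotivicClasses}(5). The differences are cosmetic. The paper splits each step into a rank-$\gamma_k$ Hom-bundle into the pulled-back tautological quotient $g_{k-1}^*\mathcal{Q}_{k-1}$, followed by an affine bundle of lifts, whereas you trivialise in a single step over standard affine opens of $\operatorname{Gr}(\Gamma_{k-1},W_k)$. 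Also, the $\operatorname{GL}(\gamma)$-torsor you want is the open subbundle $\operatorname{Isom}(\mathcal Q,\mathcal S)\subseteq\mathcal{H}om(\mathcal Q,\mathcal S)$, not a frame bundle of the Hom bundle.
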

	\begin{proof}
		Let $\text{Hom}^{\gamma_1}(V_{1},W_{1})$ be the variety parameterising rank $\gamma_1$ linear maps $V_{1}\to W_{1}$. Let $g_1:\text{Hom}^{\gamma_1}(V_{1},W_{1})\to \operatorname{Gr}(\gamma_1,W_{2})$ be the morphism given by the composition
		$$
		\begin{matrix}
			\text{Hom}^{\gamma_1}(V_{1},W_{1})&\longrightarrow& \operatorname{Gr}(\gamma_1,W_{1})& \longhookrightarrow & \operatorname{Gr}(\gamma_1,W_{2})\\
			A_1&\longmapsto &\text{im}(A_1)&\longmapsto& \text{im}(A_1)
		\end{matrix}
		$$
		where the right-most morphism is induced by the inclusion $W_{1}\subseteq W_{2}$. To see why this map is algebraic refer to Equation (\ref{mapSendingMatrixToKerAndImage}). Recall that over $\operatorname{Gr}(\gamma_1,W_{2})$ there is a tautological exact sequence of vector bundles
		\begin{center}
			\begin{tikzcd}[column sep =normal, row sep =normal]
				0\arrow[r]&\mathcal{S}_1\arrow[r]&\operatorname{Gr}(\gamma_1,W_{2})\times W_{2} \arrow[r]&\mathcal{Q}_1\arrow[r]&0.
			\end{tikzcd}
		\end{center}
		Now, define 
		$$
		X_{\Gamma_2}=\mathcal{H}om(V_{2}/V_{1},\operatorname{Hom}^{\gamma_1}(V_{1},W_{1})\times W_{2})\times_{\mathcal{H}om(V_{2}/V_{1},g_1^*\mathcal{Q}_1)}\mathcal{H}om^{\gamma_2}(V_{2}/V_{1},g_1^*\mathcal{Q}_1).
		$$
		Here $\mathcal{H}om^{\gamma_2}(V_{2}/V_{1},g_1^*\mathcal{Q}_1)\to \operatorname{Hom}^{\gamma_1}(V_{1},W_{1})$ is the fiber bundle parameterising rank $\gamma_2$ morphisms $V_{2}/V_{1}\to g_1^*\mathcal{Q}_1|_{\{A_1\}}=W_{2}/\operatorname{im}(A_1)$ as defined in (2) of Lemma \ref{usefulLemmaInvolvingVBAndZariskiLTFibrations}. Notice, furthermore, that $X_{\Gamma_2}$ is the pullback of the vector bundle $\mathcal{H}om(V_{2}/V_{1},\operatorname{Hom}^{\gamma_1}(V_{1},W_{1})\times W_2)\to \mathcal{H}om(V_{2}/V_{1},g_1^*\mathcal{Q}_1)$ to $ \mathcal{H}om^{\gamma_2}(V_{2}/V_{1},g_1^*\mathcal{Q}_1)$. Thus
		$$
		X_{\Gamma_2}\overset{p_1}{\longrightarrow} \mathcal{H}om^{\gamma_2}(V_{2}/V_{1},g_1^*\mathcal{Q}_1)\overset{q_1}{\longrightarrow} \text{Hom}^{\gamma_1}(V_{1},W_{1})
		$$ 
		is a tower of Zariski-locally trivial fibrations
		which is parameterising all rank $\Gamma_2$ morphisms $A_{\leq2}:V_{2}\to W_2$ such that $A_{\leq 2}(V_{1})\subseteq W_{1}$ and $A_{\leq 2}|_{V_{1}}$ has rank $\gamma_1$. Now, let $\operatorname{Gr}(\Gamma_2,\operatorname{Hom}^{\gamma_1}(V_{1},W_{1})\times W_{3})$ be the rank $\Gamma_2$ relative Grassmannian of the vector bundle $\operatorname{Hom}^{\gamma_1}(V_{1},W_{1})\times W_{3}\to \operatorname{Hom}^{\gamma_1}(V_{1},W_{1})$ and $g_2:X_{\Gamma_2}\to \operatorname{Gr}(\Gamma_2,\operatorname{Hom}^{\gamma_1}(V_{1},W_{1})\times W_{3})$ be given by the composition 
		$$
		\begin{matrix}
			X_{\Gamma_2}&\longrightarrow& \operatorname{Gr}(\Gamma_2,\operatorname{Hom}^{\gamma_1}(V_{1},W_{1})\times W_{2})& \longhookrightarrow & \operatorname{Gr}(\Gamma_2,\operatorname{Hom}^{\gamma_1}(V_{1},W_{1})\times W_{3})\\
			A_{\leq2}&\longmapsto &\text{im}(A_{\leq 2})&\longmapsto& \text{im}(A_{\leq2}).
		\end{matrix}
		$$

		This is a relative version of $g_1$ defined above and one can see that it is algebraic in a similar manner we did for the map in Equation (\ref{mapSendingMatrixToKerAndImage}). Similarly, we have an exact sequence 
		\begin{center}
			\begin{tikzcd}[column sep =normal, row sep =normal]
				0\arrow[r]&\mathcal{S}_2\arrow[r]&p_{\operatorname{Gr}}^*(\operatorname{Hom}^{\gamma_1}(V_{1},W_{1})\times W_{3}) \arrow[r]&\mathcal{Q}_2\arrow[r]&0
			\end{tikzcd}
		\end{center}
		where $p_{\operatorname{Gr}}:\operatorname{Gr}(\Gamma_2,\operatorname{Hom}^{\gamma_1}(V_{1},W_{1})\times W_{3})\to \operatorname{Hom}^{\gamma_1}(V_{1},W_{1})$ is the canonical projection. We also define 
		$$
		X_{\Gamma_3}=\mathcal{H}om(V_{3}/V_{2},\pi_1^*(\operatorname{Hom}^{\gamma_1}(V_{1},W_{1})\times W_{3}))\times_{\mathcal{H}om(V_{3}/V_{2},g_2^*\mathcal{Q}_2)}\mathcal{H}om^{\gamma_3}(V_{3}/V_{2},g_2^*\mathcal{Q}_2)
		$$
		where $\pi_1=q_1\circ p_1$. This construction yields  the tower of Zariski-locally trivial fibrations 
		$$
		X_{\Gamma_3}\longrightarrow \mathcal{H}om^{\gamma_3}(V_{3}/V_{2},g_2^*\mathcal{Q}_2)\longrightarrow X_{\Gamma_2}\overset{p_1}{\longrightarrow} \mathcal{H}om^{\gamma_2}(V_{2}/V_{1},g_1^*\mathcal{Q}_1)\overset{q_1}{\longrightarrow} \operatorname{Hom}^{\gamma_1}(V_{1},W_{1})
		$$
		which is now parameterising all rank $\Gamma_3$ morphisms $A_{\leq 3}:V_{3}\to W_{3}$ such that, for $j=1,2$, $A_{\leq 3}(V_{j})\subseteq W_{j}$, rank of $A_{\leq 3}|_{V_{1}}$ is $\gamma_1$ and rank of $A_{\leq 3}|_{V_{2}}$ is $\Gamma_2$. If we carry out this construction for all the steps of the flag we will end up with a tower of Zariski-locally trivial fibrations 
		\begin{center}
			\begin{tikzcd}[scale cd=1,row sep=normal, column sep=large]
				X_{\Gamma_s} \rar & \mathcal{H}om^{\gamma_s}(\mathbb{C}^{n}/V_{s-1},g_{s-1}^*\mathcal{Q}_{s-1}) \rar
				\ar[draw=none]{d}[name=X, anchor=center]{}
				& X_{\Gamma_{s-1}} \ar[rounded corners,
				to path={ -- ([xshift=2ex]\tikztostart.east)
					|- (X.center) \tikztonodes
					-| ([xshift=-2ex]\tikztotarget.west)
					-- (\tikztotarget)}]{dll}[at end]{} \\      
				\cdots \rar & \mathcal{H}om^{\gamma_2}(V_{2}/V_{1},g_1^*\mathcal{Q}_1) \rar & \operatorname{Hom}^{\gamma_1}(V_{1},W_{1})
			\end{tikzcd}
		\end{center}
		where $X_{\Gamma_s}$ is parameterising rank $\Gamma_s$ morphisms $A:\mathbb{C}^{n}\to \mathbb{C}^{r}$ such that $A(V_{j})\subseteq W_{j}$ and the rank of $A|_{V_{j}}$ is $\Gamma_j$.
		\\
		\\
		Lemma \ref{computationsMotivicClasses} now gives that 
		$$
		[F]= \mathbb{L}^{\alpha_s\Gamma_{s-1}}\cdot[\operatorname{Hom}^{\gamma_s}(\mathbb{C}^{\alpha_s},\mathbb{C}^{\dim W_s-\Gamma_{s-1}})]\cdots \mathbb{L}^{\alpha_2\gamma_1}\cdot [\operatorname{Hom}^{\gamma_2}(\mathbb{C}^{\alpha_2},\mathbb{C}^{\dim W_2 - \gamma_1})]\cdot [\operatorname{Hom}^{\gamma_1}(V_1,W_1)]
		$$
		Expanding each term using again Lemma \ref{computationsMotivicClasses} gives the desired formula. 
	\end{proof}
	The combination of all our computations so far yields
	\begin{align*}
		&[\operatorname{Frep}_{\mathbf d^\ell_1\cdots\mathbf d^\ell_s}
		(Q^\ell,\mathcal R^\ell)]=\sum_{\Gamma_\bullet}[\operatorname{Frep}^{\Gamma_\bullet}_{\mathbf{d}^\ell_\bullet}]=\sum_{\Gamma_\bullet}[\mathcal{H}_{\Gamma_\bullet}]\cdot\mathbb L^{N_{\Gamma_\bullet}}=\sum_{\Gamma_\bullet}[\operatorname{Fl}_{\mathbf{d}^\ell_\bullet }]\cdot [F_1]\cdots[F_\ell]\cdot \mathbb{L}^{N_{\Gamma_\bullet}}\\
		&=[\operatorname{Fl}_{\mathbf{d}_\bullet^\ell}]\sum_{\Gamma_\bullet} \mathbb{L}^{N_1+N_{\Gamma_\bullet}}\prod_{k=2}^\ell\prod_{j=1}^{s} [\operatorname{GL}(\gamma_{j,k})]\mathbb{L}^{\alpha_{j,k}\Gamma_{j-1,k}}[\operatorname{Gr}(\gamma_{j,k},r_{j,k}-\Gamma_{j-1,k})][\operatorname{Gr}(\alpha_{j,k}-\gamma_{j,k},\alpha_{j,k})]
	\end{align*}
	where $r_{j,k}=\sum_{i=1}^j\beta_{i,k}$. The remaining ingredient is the computation of the motivic class of $\operatorname{Fl}_{\mathbf{d}_\bullet^\ell}$. Lemma \ref{computationsMotivicClasses} in the appendix gives 
	$$
	[\operatorname{Fl}_{\beta_{\bullet,k}}(\mathbb C^{r_k})]=\prod_{j=1}^{s-1}[\operatorname{Gr}(r_{j,k},\mathbb{C}^{r_{j+1,k}})]
	$$
	Thus, 
	$$
	[\operatorname{Fl}_{\mathbf{d}_\bullet^\ell}]=\prod_{k=1}^\ell\prod_{j=1}^{s-1}[\operatorname{Gr}(r_{j,k},\mathbb{C}^{r_{j+1,k}})].
	$$
	We put all together in the following:
	\begin{theorem}\label{motivicClassQuiverModuliWithRelations}
	Under our previous notations and conventions, in $K_0^{\text{loc}}(\operatorname{Var}_{\mathbb{C}})$,
	$$
			\resizebox{\textwidth}{!}{$ \displaystyle
		[\mathcal{M}^{\theta^\ell}(Q^\ell,\mathbf{d}^\ell,\mathcal{R}^\ell)]=\frac{\mathbb{L}-1}{[\operatorname{GL}(\mathbf{d}^\ell)]}\sum_{*}(-1)^{s-1}\prod_{k=1}^\ell{r_{k}\choose \beta_{1,k},\cdots ,\beta_{s,k}}_{\mathbb{L}}\sum_{\Gamma_\bullet} \mathbb{L}^{N_1+N_{\Gamma_\bullet}}\prod_{k=2}^\ell\prod_{j=1}^{s} [\operatorname{GL}(\gamma_{j,k})]\mathbb{L}^{\alpha_{j,k}\Gamma_{j-1,k}}{r_{j,k}-\Gamma_{j-1,k}\choose \gamma_{j,k}}_{\mathbb{L}}{\alpha_{j,k}\choose \alpha_{j,k}-\gamma_{j,k}}_{\mathbb{L}}.$}$$
	\end{theorem}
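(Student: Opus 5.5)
The plan is to assemble the theorem from the pieces already established, so the argument is essentially bookkeeping in $K_0^{\operatorname{loc}}(\operatorname{Var}_{\mathbb{C}})$. First, invoke Lemma \ref{MotivicFeiIdentity}. This expresses $[\mathcal{M}^{\theta^\ell}(Q^\ell,\mathbf{d}^\ell,\mathcal{R}^\ell)]$ as $\frac{\mathbb{L}-1}{[\operatorname{GL}(\mathbf{d}^\ell)]}$ times the alternating sum, over admissible decompositions $\mathbf{d}^\ell_1+\cdots+\mathbf{d}^\ell_s=\mathbf{d}^\ell$, of the classes $[\operatorname{Frep}_{\mathbf{d}^\ell_1\cdots\mathbf{d}^\ell_s}(Q^\ell,\mathcal{R}^\ell)]$. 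It therefore suffices to substitute the formula for each quiver flag variety obtained just before the statement.

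Next, for a fixed admissible decomposition, use the displayed identity
\[
[\operatorname{Frep}_{\mathbf d^\ell_1\cdots\mathbf d^\ell_s}(Q^\ell,\mathcal R^\ell)]=[\operatorname{Fl}_{\mathbf{d}_\bullet^\ell}]\sum_{\Gamma_\bullet}\mathbb{L}^{N_1+N_{\Gamma_\bullet}}\prod_{k=2}^\ell\prod_{j=1}^{s}[\operatorname{GL}(\gamma_{j,k})]\mathbb{L}^{\alpha_{j,k}\Gamma_{j-1,k}}[\operatorname{Gr}(\gamma_{j,k},r_{j,k}-\Gamma_{j-1,k})][\operatorname{Gr}(\alpha_{j,k}-\gamma_{j,k},\alpha_{j,k})].
\]
This identity rests on three inputs: the stratification by rank profiles, the vector bundle structure of each stratum over $\mathcal{H}_{\Gamma_\bullet}$, and the preceding lemma computing the fibre classes $[F_k]$. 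Then rewrite every Grassmannian class via Lemma \ref{computationsMotivicClasses} as a Gaussian binomial: $[\operatorname{Gr}(a,b)]=\binom{b}{a}_{\mathbb{L}}$. This turns the two Grassmannian factors into $\binom{r_{j,k}-\Gamma_{j-1,k}}{\gamma_{j,k}}_{\mathbb{L}}$ and $\binom{\alpha_{j,k}}{\alpha_{j,k}-\gamma_{j,k}}_{\mathbb{L}}$.

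Finally, treat the flag factor. The coordinate flag factors are points, so
\[
[\operatorname{Fl}_{\mathbf{d}^\ell_\bullet}]=\prod_{k=1}^\ell[\operatorname{Fl}_{\beta_{\bullet,k}}(\mathbb{C}^{r_k})]=\prod_{k=1}^\ell\prod_{j=1}^{s-1}\binom{r_{j+1,k}}{r_{j,k}}_{\mathbb{L}},
\]
and a telescoping identity for Gaussian binomials collapses this product into the $\mathbb{L}$-multinomial $\binom{r_k}{\beta_{1,k},\ldots,\beta_{s,k}}_{\mathbb{L}}$. Substituting everything into Fei's identity gives the stated formula.

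The main obstacle I expect is not the assembly but checking that each ingredient is legitimate in $K_0^{\operatorname{loc}}$. One issue is that the classes are multiplicative along the towers of Zariski-locally trivial fibrations; this is Lemma \ref{fiberProductOfZariskiLocallyTrivialFibrationsIsZariskiLocallyTrivial} together with the appendix lemmas. A second issue is that the strata $\operatorname{Frep}^{\Gamma_\bullet}_{\mathbf{d}^\ell_\bullet}$ are locally closed and exhaust the flag variety, which holds because rank conditions are semicontinuous. I would also double-check two routine conventions. The steps of the flag factor must be indexed consistently, so that the multinomial matches $\beta_{\bullet,k}$ with $\beta_{s,k}$ the final step. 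The range of $\Gamma_\bullet$ in (\ref{rankProfileDef}) must also agree with where the Grassmannians are nonempty; the Gaussian binomials vanish otherwise, so no spurious terms appear.
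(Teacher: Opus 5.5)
Your proposal is correct and follows the paper's own route: Lemma \ref{MotivicFeiIdentity}, then the rank-profile stratification of each quiver flag variety, the vector-bundle fibres over $\mathcal{H}_{\Gamma_\bullet}$, the fibre-product lemma and the fibre-class lemma, and finally rewriting the Grassmannian classes as Gaussian binomials. The last step, collapsing $\prod_{j=1}^{s-1}\binom{r_{j+1,k}}{r_{j,k}}_{\mathbb{L}}$ into $\binom{r_k}{\beta_{1,k},\ldots,\beta_{s,k}}_{\mathbb{L}}$, telescopes exactly as you say, because $r_{1,k}=\beta_{1,k}$ and $r_{s,k}=r_k$.
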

	
	Combining this result with Proposition \ref{motivicBialynickiBirulaDecomposition} gives the following closed formula for the motive of the full moduli space $\mathcal{M}^{\theta}(\overline{Q},\mathbf{d})$:
	\begin{theorem}\label{motivicClassOfNakajimaQuiverVariety}
	In $K_0^{\operatorname{loc}}(\operatorname{Var}_\mathbb{C})$,
	$$
		\resizebox{\textwidth}{!}{$ \displaystyle
		[\mathcal{M}^{\theta}(\overline{Q},\mathbf{d})]=\mathbb{L}^{1-\langle\mathbf{d},\mathbf{d}\rangle_Q}\sum_{\mathbf{d}^\ell\in \mathscr{D}_\theta(\mathbf{d})}\frac{\mathbb{L}-1}{[\operatorname{GL}(\mathbf{d}^\ell)]}\sum_{*}(-1)^{s-1}\prod_{k=1}^\ell{r_{k}\choose \beta_{1,k},\cdots ,\beta_{s,k}}_{\mathbb{L}}\sum_{\Gamma_\bullet} \mathbb{L}^{N_1+N_{\Gamma_\bullet}}\prod_{k=2}^\ell\prod_{j=1}^{s} [\operatorname{GL}(\gamma_{j,k})]\mathbb{L}^{\alpha_{j,k}\Gamma_{j-1,k}}{r_{j,k}-\Gamma_{j-1,k}\choose \gamma_{j,k}}_{\mathbb{L}}{\alpha_{j,k}\choose \alpha_{j,k}-\gamma_{j,k}}_{\mathbb{L}}.$}$$
	\end{theorem}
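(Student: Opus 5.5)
The proof is essentially a substitution of Theorem \ref{motivicClassQuiverModuliWithRelations} into Proposition \ref{motivicBialynickiBirulaDecomposition}. The one real input is that the fibre dimension $N_{\mathbf{d}^\ell}$ does not depend on $\mathbf{d}^\ell$.

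First, I start from the identity of Proposition \ref{motivicBialynickiBirulaDecomposition},
$$
[\mathcal{M}^{\theta}(\overline{Q},\mathbf{d})]=\sum_{\mathbf{d}^\ell\in \mathscr{D}_\theta(\mathbf{d})}[\mathcal{M}^{\theta^\ell}(Q^\ell,\mathbf{d}^\ell,\mathcal{R}^\ell)]\cdot \mathbb{L}^{N_{\mathbf{d}^\ell}},
$$
which holds in $K_0(\operatorname{Var}_\mathbb{C})$. I push it forward to $K_0^{\operatorname{loc}}(\operatorname{Var}_\mathbb{C})$ along the localization map, which is a ring homomorphism.

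Second, I identify $N_{\mathbf{d}^\ell}$ using the second remark following that proposition. The holomorphic symplectic form $\omega_\mathbb{C}$ has weight one under the $\mathbb{C}^*$-action. Hence at a fixed point $[\mathbf{V}]$ it pairs $T_{[\mathbf V]}\mathcal{M}^\theta(\overline{Q},\mathbf{d})_k$ perfectly with $T_{[\mathbf V]}\mathcal{M}^\theta(\overline{Q},\mathbf{d})_{1-k}$ and pairs all other weight spaces trivially. The weights $k\geq 1$ and $1-k\leq 0$ are exchanged by this pairing. Therefore $\dim T^+ = \dim(T_0\oplus T^-)$, so $\dim T^+=\tfrac12\dim\mathcal{M}^\theta(\overline{Q},\mathbf{d})$.

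By Proposition \ref{upAndDownFlowsIsoWithWeightedTangentSpaces}, the fibre of $U_j^+\to F_j$ is $T^+$. By the Corollary in Section \ref{deformationTheorySection}, $\dim\mathcal{M}^\theta(\overline{Q},\mathbf{d})=2-2\langle\mathbf{d},\mathbf{d}\rangle_Q$. Together these give $N_{\mathbf{d}^\ell}=1-\langle\mathbf{d},\mathbf{d}\rangle_Q$ for every $\mathbf{d}^\ell\in\mathscr{D}_\theta(\mathbf{d})$, so the factor $\mathbb{L}^{1-\langle\mathbf{d},\mathbf{d}\rangle_Q}$ pulls out of the sum.

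Third, I replace each $[\mathcal{M}^{\theta^\ell}(Q^\ell,\mathbf{d}^\ell,\mathcal{R}^\ell)]$ by the expression in Theorem \ref{motivicClassQuiverModuliWithRelations}. Term by term, this is exactly the right-hand side of the statement.

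The only step needing care is the second one, the uniformity of $N_{\mathbf{d}^\ell}$. Its crux is the nondegeneracy of the weight-$k$ versus weight-$(1-k)$ pairing. I would justify it concretely through the duality \eqref{dualityWeightedCohomologyGroups} with $j=1$, which gives $H^1(\overline{\mathcal{H}om}^\bullet_{k})^\vee\cong H^1(\overline{\mathcal{H}om}^\bullet_{-k-1})$. Translated through $T_k\cong H^1(\overline{\mathcal{H}om}^\bullet_{-k})$, this becomes $T_k^\vee\cong T_{1-k}$. Summing over $k\geq1$ then gives $\dim T^+=\dim T_0+\dim T^-$ directly, so the symplectic geometry is not needed.
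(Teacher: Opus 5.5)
Your proposal is correct and follows the paper's route: the paper obtains this theorem by substituting Theorem \ref{motivicClassQuiverModuliWithRelations} into Proposition \ref{motivicBialynickiBirulaDecomposition}. It takes the uniform exponent $N_{\mathbf{d}^\ell}=\tfrac12\dim\mathcal{M}^\theta(\overline{Q},\mathbf{d})=1-\langle\mathbf{d},\mathbf{d}\rangle_Q$ from the weight-one symplectic form, equivalently the duality $T_k^\vee\cong T_{1-k}$ recovered from \eqref{dualityWeightedCohomologyGroups}, in the remarks after that proposition. Your sign bookkeeping $T_k\cong H^1(\overline{\mathcal{H}om}^\bullet_{-k})$ and $H^1(\overline{\mathcal{H}om}^\bullet_{-k})^\vee\cong H^1(\overline{\mathcal{H}om}^\bullet_{k-1})$ checks out, and the uniformity of the exponent also covers the case where one quiver-chain moduli space contributes several fixed components $F_j$.
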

	
	\section{Appendix}\label{theAppendix}
	\subsection{Quiver representations with relations}\label{representationsWIthRelations}
	Let $Q$ be a quiver. A \emph{relation} in $Q$ is a formal sum of the form 
	$$
	\sum_{\substack{p\text{ path, }|p|\geq 1\\ t(p)=i,h(p)=j}}c_p\cdot p,\ c_p\in \mathbb{C}
	$$
	where all but finitely many terms are zero. Let $\mathcal{R}$ be a set of relations of the quiver $Q$. For all $r\in \mathcal{R}$, let $t(r),\ h(r)\in Q_0$ denote, respectively, the common tail and head of all the paths appearing in $r$.  Every such $r\in \mathcal{R}$, determines a morphism of affine varieties
	$$
	\nu_r:\text{Rep}(Q,\mathbf{d})\to \text{Hom}(V_{t(r)},V_{h(r)})
	$$
	given by composing and adding linear maps according to the information given by the relation $r$. The $\mathbf{d}$-dimensional representations of $Q$ satisfying the relations in $\mathcal{R}$ are then parameterised by the closed affine subvariety 
	$$
	\text{Rep}(Q,\mathbf{d},\mathcal{R}):=\bigcap_{r\in \mathcal{R}}\nu^{-1}_r(0)\subseteq\text{Rep}(Q,\mathbf{d}).
	$$
	An easy computation shows that the variety $\text{Rep}(Q,\mathbf{d},\mathcal{R})$ is $\text{GL}(\mathbf{d})$-invariant under the action of $\text{GL}(\mathbf{d})$. For $\theta$ a generic stability parameter and $\mathbf{d}$ an indivisible dimension vector we set $\text{Rep}(Q,\mathbf{d},\mathcal{R})^{\theta\text{-}s}:=\text{Rep}(Q,\mathbf{d},\mathcal{R})\cap \text{Rep}(Q,\mathbf{d})^{\theta\text{-}s}$ and define:
	\begin{definition}
		The GIT quotient 
		$$
		\mathcal{M}^{\theta}(Q,\mathbf{d},\mathcal{R}):=\text{Rep}(Q,\mathbf{d},\mathcal{R})^{\theta\text{-}s}\sslash\text{GL}(\mathbf{d})
		$$
		is the \emph{moduli space of $\theta$-stable representations of $Q$ satisfying the relations in $\mathcal{R}$}.
	\end{definition}
	\subsection{Flag varieties}\label{flagVarietiesSection} We follow closely the expositions from Brion \cite[Section 1.2]{BrionFlagVarieties}, Anderson and Fulton \cite[Section 4.2]{AndersonFultonEquivariantCohomology}. Let $d\in \mathbb{N}$ and $(d_1,\ldots,d_s)$ a sequence of non-negative integers with sum $d$. A \emph{flag} in $\mathbb{C}^d$ of type $d_\bullet$ is an increasing sequence of linear subspaces 
	$$
	0\subseteq V_1\subseteq V_2\subseteq \cdots\subseteq V_{s-1}\subseteq V_s=\mathbb{C}^d
	$$
	such that $\dim(V_k)=\delta_k$ where $\delta_k=d_1+\ldots+d_k$. 
	\\
	\\
	We denote the set of all flags of type $d_\bullet$ as  $\operatorname{Fl}_{d_\bullet}(\mathbb{C}^d)$ and we call it the \emph{flag variety of type }$d_\bullet$. We now discuss the variety structure of $\operatorname{Fl}_{d_\bullet}(\mathbb{C}^d)$. Let $X_{s-1}:=\operatorname{Gr}(\delta_{s-1},\mathbb{C}^d)$ be the Grassmannian of $\delta_{s-1}$-dimensional planes of $\mathbb{C}^d$. Over this Grassmannian there is a tautological exact sequence of vector bundles 
	\begin{equation}\label{tautologicalExactSequenceGrassmannian}
		\begin{tikzcd}[column sep =normal, row sep =normal]
			0\arrow[r]&\mathcal{S}_{s-1}\arrow[r]&\operatorname{Gr}(\delta_{s-1},\mathbb{C}^d)\times \mathbb{C}^d \arrow[r]&\mathcal{Q}_s\arrow[r]&0
		\end{tikzcd}
	\end{equation}
	where $\mathcal{S}_{s-1}|_{\{W\}}=W$ for $W\in \operatorname{Gr}(\delta_{s-1},\mathbb{C}^d)$ and the tautological quotient $\mathcal{Q}_s\simeq (\operatorname{Gr}(\delta_{s-1},\mathbb{C}^d)\times \mathbb{C}^d)/\mathcal{S}_{s-1}$ has rank $d_s$. Consider now the \emph{relative Grassmannian} $X_{s-2}:=\mathbf{Gr}_{X_{s-1}}(\delta_{s-2},\mathcal{S}_{s-1})$ (see also \cite[Section 5.1.5]{NitsureHilbertQuotSchemes}) which can be constructed using the frame bundle of $\mathcal{S}_{s-1}$ and the left action of $\operatorname{GL}(\delta_{s-1})$ on the Grassmannian $\operatorname{Gr}(\delta_{s-2},\mathbb{C}^{\delta_{s-1}})$. The relative Grassmannian parameterises pairs $(V_{s-1},V_{s-2}\subseteq V_{s-1})$ such that $\dim(V_{s-1})=\delta_{s-1}$ and $\dim(V_{s-2})=\delta_{s-2}$.  Over the relative Grassmannian just described there is an analogous exact sequence of vector bundles to the one that we have in Equation (\ref{tautologicalExactSequenceGrassmannian}):
	\begin{equation}\label{tautologicalExactSequenceRelativeGrassmannian}
		\begin{tikzcd}[column sep =normal, row sep =normal]
			0\arrow[r]&\mathcal{S}_{s-2}\arrow[r]&p_{s-2}^*\mathcal{S}_{s-1} \arrow[r]&\mathcal{Q}_{s-1}\arrow[r]&0.
		\end{tikzcd}
	\end{equation}
	Here $p_{s-2}:\mathbf{Gr}_{X_{s-1}}(\delta_{s-2},\mathcal{S}_{s-1})\to \operatorname{Gr}(\delta_{s-1},\mathbb{C}^d)$ is the canonical projection and $\mathcal{S}_{s-2}|_{(V_{s-1},V_{s-2})}=V_{s-2}$. The relative Grassmannian $X_{s-3}:=\mathbf{Gr}_{X_{s-2}}(\delta_{s-3},\mathcal{S}_{s-2})$ is now parameterising tuples $(V_{s-1},V_{s-2}\subseteq V_{s-1},V_{s-3}\subseteq V_{s-2}\subseteq V_{s-1})$ such that $\dim(V_{s-k})=\delta_{s-k}$, $k=1,2,3$. Iterating this construction gives a tower of relative Grassmannians
	$$
	X_1\overset{p_1}{\longrightarrow}X_2\longrightarrow\cdots\longrightarrow X_{s-2} \overset{p_{s-2}}{\longrightarrow} X_{s-1}
	$$
	 where $p_k:X_{k}\to X_{k+1}$ are the canonical projections and such that $X_1=\mathbf{Gr}_{X_2}(d_1,\mathcal{S}_2)\simeq \operatorname{Fl}_{d_\bullet}(\mathbb{C}^d)$. This point of view on the flag variety will be useful to compute its motivic class (see Lemma \ref{computationsMotivicClasses}). 
	\\
	\\
	Notice, by construction, that over the flag variety $\operatorname{Fl}_{d_\bullet}(\mathbb{C}^d)$ there is a tautological flag
	\begin{equation}\label{tautologicalFlagOverFlagVariety}
		0\subseteq \mathscr{S}_{1}\subseteq\mathscr{S}_{2}\subseteq\cdots\subseteq \mathscr{S}_{s}=\operatorname{Fl}_{d_{\bullet}}(\mathbb{C}^{d})\times\mathbb{C}^{d}, \ \mathscr{S}_k:=(p_{k-1}\circ \ldots\circ p_{1})^*\mathcal{S}_{k} 
	\end{equation}
	and quotients 
	\begin{equation}\label{tautologicalQuotientsOverFlagVariety}
		\operatorname{Fl}_{d_{\bullet}}(\mathbb{C}^{d})\times\mathbb{C}^{d} \twoheadrightarrow \mathscr{Q}_{1}\twoheadrightarrow \mathscr{Q}_{2}\twoheadrightarrow\cdots\twoheadrightarrow \mathscr{Q}_{s-1}, \ \mathscr{Q}_{k}\simeq \mathscr{S}_{s}/\mathscr{S}_{k},
	\end{equation}
	of vector bundles such that $\operatorname{rk}(\mathscr{S}_{k})=\delta_{k}$ and $\mathscr{S}_{k}|_{F_{\bullet}}=F_{k}$ for every $F_{\bullet}\in\operatorname{Fl}_{d_{\bullet}}(\mathbb{C}^{d})$. 
	\\
	\\
	Finally, we discuss the so-called \emph{coordinate flag varieties}. Let $\{e_1,\ldots,e_d\}$ be the standard basis of $\mathbb{C}^d$. We say that $V_{\bullet}\in \operatorname{Fl}_{d_\bullet}(\mathbb{C}^d)$ is a \emph{coordinate flag} if for some subsets
	$$
	I_1\subseteq I_2\subseteq \cdots \subseteq I_{s-1}\subseteq I_s={1,\ldots,d}, \ |I_k|=\delta_{k}
	$$ 
	we have that 
	$$V_{k}=\langle e_i\rangle_{i\in I_k}.$$
	We denote  the subset of the flag variety parameterising all coordinate flags as $\operatorname{Fl}^{\text{coord}}_{d_\bullet}(\mathbb{C}^d)$. Notice that $\operatorname{Fl}^{\text{coord}}_{d_\bullet}(\mathbb{C}^d)$ is parameterised by the coset space $\mathfrak{S}_d/(\mathfrak{S}_{d_1}\times \cdots \times\mathfrak{S}_{d_s})$ where $\mathfrak{S}_d$ denotes the symmetric group in $d$ elements. One can show that $\operatorname{Fl}^{\text{coord}}_{d_\bullet}(\mathbb{C}^d)=(\operatorname{Fl}_{d_\bullet}(\mathbb{C}^d))^T$ where $T=(\mathbb{C}^*)^d$ and the action of $T$ is induced from that of $\operatorname{GL}(d)$ on the flag variety $\operatorname{Fl}_{d_\bullet}(\mathbb{C}^d)$ given by $g\cdot(V_\bullet)=(gV_\bullet)$ \cite[Section 15.3]{AndersonFultonEquivariantCohomology}. $\operatorname{Fl}^{\text{coord}}_{d_\bullet}(\mathbb{C}^d)$ is then a finite closed subscheme \cite[Theorem 7.1]{MilneAlgebraicGroups} of the corresponding flag variety with $\binom{d}{d_{1},\ldots,d_{s}}$ points.
	
	\subsection{Quiver Flag Varieties}\label{quiverFlagVarieties}
	Let $Q$ be a quiver, $\mathcal{R}$ a set of relations and $\mathbf{d}=\sum_{k=1}^s\mathbf{d}_k$ a decomposition of the dimension vector $\mathbf{d}\in \mathbb{N}^{|Q_0|}$. We define $$\operatorname{Fl}_{\mathbf{d}_\bullet}=\prod_{i\in Q_0}\operatorname{Fl}_{d_{\bullet,i}}(\mathbb{C}^{d_i})$$
	where $\operatorname{Fl}_{d_{\bullet,i}}(\mathbb{C}^{d_i})$ is the flag variety defined in the previous section of the appendix.
	
	\begin{definition}
		The \emph{quiver flag variety} is the incidence variety whose underlying set is
		$$
		\resizebox{\textwidth}{!}{$
		\operatorname{Frep}_{\mathbf{d}_{\bullet}}(Q,\mathcal{R}):=\{((X_{\alpha})_{\alpha\in Q_1},(V_{\bullet,i})_{i\in Q_0})\in \text{Rep}(Q,\mathbf{d},\mathcal{R})\times \operatorname{Fl}_{\mathbf{d}_\bullet}| \ X_{\alpha}(V_{k,t\alpha})\subseteq V_{k,h\alpha}\ \forall k=1,\ldots,s-1\text{ and } \alpha\in Q_1\}.$}
		$$
	\end{definition}
	Now we describe its scheme structure. Recall, from equations (\ref{tautologicalFlagOverFlagVariety}) and (\ref{tautologicalQuotientsOverFlagVariety}), that over each factor $\operatorname{Fl}_{d_{\bullet,i}}(\mathbb{C}^{d_i})$ there is a tautological flag $0\subseteq\mathcal{S}_{1,i}\subseteq\cdots\subseteq\mathcal{S}_{s,i}=\operatorname{Fl}_{d_{\bullet,i}}(\mathbb{C}^{d_i})\times \mathbb{C}^{d_i}$ and quotients 
	$
	\operatorname{Fl}_{d_{\bullet,i}}(\mathbb{C}^{d_i})\times\mathbb{C}^{d_i} \twoheadrightarrow \mathcal{Q}_{1,i}\twoheadrightarrow \cdots\twoheadrightarrow \mathcal{Q}_{s-1,i}, \ \mathcal{Q}_{k,i}\simeq \mathcal{S}_{s,i}/\mathcal{S}_{k,i},
	$
	of vector bundles such that $\operatorname{rk}(\mathcal{S}_{k,i})=\sum_{j=1}^kd_{j,i}$ and $\mathcal{S}_{k,i}|_{F_{\bullet}}=F_{k}$ for every $F_{\bullet}\in\operatorname{Fl}_{d_{\bullet,i}}(\mathbb{C}^{d_i})$. Let us, abusively, use the same notation for the pull-back of these to $\operatorname{Rep}(Q,\mathbf{d},\mathcal{R})\times \operatorname{Fl}_{\mathbf{d}_\bullet}$. For all $\alpha\in Q_1$, there is a tautological morphism of vector bundles $\mathcal{X}_{\alpha}:\mathcal{S}_{s,t\alpha}\to \mathcal{S}_{s,h\alpha}$ which is the pullback of that defined at the beginning of Proposition \ref{existenceUniversalFamilyNakajimaQuiverVarieties}. Then, we want for all $k=1,\ldots,s-1$ and $\alpha\in Q_1$ the morphism, or equivalently the associated section, 
	$$
	\sigma_{\alpha,k}:=\mathcal{S}_{k,t\alpha}\hookrightarrow \mathcal{S}_{s,t\alpha}\overset{\mathcal{X}_{\alpha}}{\longrightarrow}\mathcal{S}_{s,h\alpha}\twoheadrightarrow \mathcal{Q}_{k,h\alpha}
	$$
	to vanish. The quiver flag variety is therefore the closed subscheme of $\operatorname{Rep}(Q,\mathbf{d},\mathcal{R})\times \operatorname{Fl}_{\mathbf{d}_\bullet}$ given by the scheme-theoretic intersection 
	$$
	\operatorname{Frep}_{\mathbf{d}_{\bullet}}(Q,\mathcal{R}) = \bigcap_{\substack{\alpha\in Q_1 \\ k=1,\ldots,s-1}} Z(\sigma_{\alpha,k})
	$$
	where $Z(\sigma_{\alpha,k})\subseteq \operatorname{Rep}(Q,\mathbf{d},\mathcal{R})\times \operatorname{Fl}_{\mathbf{d}_\bullet}$ denotes the zero subscheme of the global section $\sigma_{\alpha,k}\in H^0( \operatorname{Rep}(Q,\mathbf{d},\mathcal{R})\times \operatorname{Fl}_{\mathbf{d}_\bullet},\mathcal{H}om(\mathcal{S}_{k,t\alpha},\mathcal{Q}_{k,h\alpha}))$.
	\\
	\\
	Finally, we would like to comment on the fact that there are two canonical projections 
	\begin{center}
		\begin{tikzcd}
			&\operatorname{Frep}_{\mathbf{d}_{\bullet}}(Q,\mathcal{R})\arrow[dr,"p_2"]\arrow[dl,"p_1",']& \\
			\operatorname{Rep}(Q,\mathbf{d},\mathcal{R})&& \operatorname{Fl}_{\mathbf{d}_\bullet}
		\end{tikzcd}
	\end{center}
	and that $p_2$ defines a Zariski-locally trivial fibration whose typical fiber is the closed subvariety of $\operatorname{Rep}(Q,\mathbf{d},\mathcal{R})$ of flag-preserving representations. To see this, we fix a flag $F_\bullet^0	=
	(F_{\bullet,i}^0)_{i\in Q_0} \in \operatorname{Fl}_{\mathbf d_\bullet}$ and identify $\operatorname{Fl}_{\mathbf{d}_\bullet}$ with the quotient $\operatorname{GL}(\mathbf{d})/\operatorname{P}(\mathbf{d}_\bullet)$ where $\operatorname{P}(\mathbf{d}_\bullet)=\prod_{i\in Q_0}\operatorname{P}(\mathbf{d}_{\bullet,i})$ is the product of the parabolic subgroups fixing the flag $F_{\bullet,i}$ for each $i\in Q_0$. Let
	$$
	Z_{\mathbf d_\bullet}
	:=
	\left\{ X\in\operatorname{Rep}(Q,\mathbf d,\mathcal R)
	\ \middle|\
	X_\alpha(F_{k,t\alpha}^0)
	\subseteq
	F_{k,h\alpha}^0
	\text{ for every }
	\alpha\in Q_1,\;
	k=1,\ldots,s-1\right\}.
	$$
	This is a closed $\operatorname{P}(\mathbf{d}_\bullet)$-invariant subvariety of $\operatorname{Rep}(Q,\mathbf d,\mathcal R)$. One can then see that
	$$
	\operatorname{GL}(\mathbf{d})\times_{\operatorname{P}(\mathbf{d}_\bullet)}Z_{\mathbf{d}_{\bullet}}\simeq \operatorname{Frep}_{\mathbf{d}_\bullet}(Q,\mathcal{R})
	$$
	and that the projection $p_2$ is identified with the canonical map $	\operatorname{GL}(\mathbf{d})\times_{\operatorname{P}(\mathbf{d}_\bullet)}Z_{\mathbf{d}_{\bullet}}\to \operatorname{GL}(\mathbf{d})/\operatorname{P}(\mathbf{d}_\bullet)$. The local triviality of $p_2$ just discussed will be useful for the motivic computations we carry out in Section \ref{motivicClassHigherRankStarShapedQuiver}. 

	\subsection{Grothendieck ring of varieties}\label{GrothendieckRingOfVarieties}
	Denote by $\operatorname{Var}_{\mathbb{C}}$ the category of quasi-projective varieties over $\mathbb{C}$. For $X\in \operatorname{Var}_{\mathbb{C}}$, let $[X]$ denote its isomorphism class. The \emph{Grothendieck ring of varieties}, $K_0(\operatorname{Var}_{\mathbb{C}})$, is the commutative ring whose underlying group is the free abelian group generated by the isomorphism classes $[X]$ modulo the relation 
	$$
	[X]=[X']+[X\setminus X']
	$$ 
	where $X'\subseteq X$ is a Zariski-closed subset (notice, in particular, that $[X]+[X']=[X\sqcup X']$). The product in $K_0(\operatorname{Var}_{\mathbb{C}})$ is given by 
	$$
	[X]\cdot [X']=[X\times X'].
	$$
	The additive and multiplicative units are $0=[\varnothing]$ and $1=[\text{Spec}(\mathbb{C})]$ respectively. The class in $K_0(\operatorname{Var}_{\mathbb{C}})$ of the affine line is known in the literature as the \emph{Lefschetz motive} and we denote it by $\mathbb{L}:=[\mathbb{A}^1_{\mathbb{C}}]$. In our computations, we will need to invert, for instance, the Lefschetz motive or elements of the form $\mathbb{L}^n-1$ so this is why one often works in the localization $K_0^{\operatorname{loc}}(\operatorname{Var}_{\mathbb{C}}):=K_0(\operatorname{Var}_{\mathbb{C}})[[\text{GL}(d)]^{-1}:d\geq 1]$ (see \cite[Lemma 3.8]{BridgelandIntroMotivicHallAlgebras}). In the following lemma we compute motivic classes that are useful for some computations carried out in this paper. 
	\begin{lemma}\label{computationsMotivicClasses}
		Under the previous notations, in $K_0^{\operatorname{loc}}(\operatorname{Var}_{\mathbb{C}})$:
		\begin{enumerate}
			\item If $f:X\to Y$ is a Zariski locally trivial fibration with fibre $F$, then $[X]=[F]\cdot [Y]$. 
			\item More generally, if $X_n\to X_{n-1}\to \cdots\to X_1\to X_0$ is a tower of Zariski-locally trivial fibrations such that the fiber of $X_{k}\to X_{k-1}$ is $F_k$, then 
			$$
			[X_n]=[F_n]\cdot [F_{n-1}] \cdots[F_1]\cdot[X_0].
			$$
			\item $[\operatorname{GL}(d)]= (\mathbb{L}^{d}-1)(\mathbb{L}^d-\mathbb{L})\cdots (\mathbb{L}^d-\mathbb{L}^{d-1})$.
			\item Let $0\leq n\leq d$ be integers and let $\operatorname{Gr}(n,d)$ be the Grassmannian of $n$-planes inside a $d$-dimensional vector space, then 
			$$[\operatorname{Gr}(n,d)]={d\choose n}_{\mathbb{L}}:=\frac{[\operatorname{GL}(d)]}{[\operatorname{GL}(n)][\operatorname{GL}(d-n)]\mathbb{L}^{n(d-n)}}=\frac{(\mathbb{L}^d-1)(\mathbb{L}^d-\mathbb{L})\cdots(\mathbb{L}^{d}-\mathbb{L}^{n-1})}{(\mathbb{L}^{n}-1)(\mathbb{L}^n-\mathbb{L})\cdots (\mathbb{L}^n-\mathbb{L}^{n-1})}$$.
			\item Let $\operatorname{Hom}^r(\mathbb{C}^n,\mathbb{C}^m)$ denote the variety of linear maps $\mathbb{C}^{n}\to \mathbb{C}^{m}$ of rank $r\leq \min\{n,m\}$. Then $$[\operatorname{Hom}^r(\mathbb{C}^n,\mathbb{C}^m)]=[\operatorname{GL}(r)][\operatorname{Gr}(r,m)][\operatorname{Gr}(n-r,n)].$$
			\item Let $\operatorname{Fl}_{d_\bullet}(\mathbb{C}^d)$ be the type $d_\bullet$ flag variety as defined in Section \ref{flagVarietiesSection}. Then, following the same convention in loc. cit.: 
			$$
			[\operatorname{Fl}_{d_\bullet}(\mathbb{C}^d)]=\prod_{k=1}^{s-1}[\operatorname{Gr}(\delta_k,\mathbb{C}^{\delta_{k+1}})]=\prod_{k=1}^{s-1}{\delta_{k+1}\choose\delta_k}_{\mathbb{L}}={d\choose d_1,\ldots,d_s}_{\mathbb{L}}.
			$$ 
		\end{enumerate}
	\end{lemma}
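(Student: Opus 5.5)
The six items are standard, and I would prove them in the order stated, since each later item reduces to earlier ones.

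For (1), I would cover $Y$ by Zariski opens $U_i$ over which $f^{-1}(U_i)\cong U_i\times F$. By noetherian induction I would refine these to a finite partition of $Y$ into locally closed pieces $Y_j$, each contained in some $U_i$, so that $f^{-1}(Y_j)\cong Y_j\times F$. The scissor relation gives $[X]=\sum_j[f^{-1}(Y_j)]$, the product rule gives $[f^{-1}(Y_j)]=[Y_j][F]$, and summing gives $[X]=[F][Y]$. Item (2) then follows from (1) by induction on the length of the tower.

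For (3), I would count frames. I would map $\operatorname{GL}(d)$ to the variety of ordered $(d-1)$-tuples of linearly independent vectors by forgetting the last column. Each fibre is $\mathbb{C}^d$ minus a linear subspace of dimension $d-1$, and this fibration is Zariski-locally trivial, since the complement of the span of a moving frame can be trivialized by completing the frame to a basis locally. Iterating this via (2) gives the product $\prod_{j=0}^{d-1}(\mathbb{L}^d-\mathbb{L}^j)$.

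For (4), I would consider the map $\operatorname{GL}(d)\to\operatorname{Gr}(n,d)$ sending $g$ to the span of its first $n$ columns. The fibre is the parabolic subgroup $P$, and this is a Zariski-locally trivial principal bundle, trivialized on the standard Schubert cells or affine charts via the big cell. Also $P\cong \operatorname{GL}(n)\times\operatorname{GL}(d-n)\times\mathbb{A}^{n(d-n)}$ as a variety. Hence $[\operatorname{GL}(d)]=[\operatorname{Gr}(n,d)][\operatorname{GL}(n)][\operatorname{GL}(d-n)]\mathbb{L}^{n(d-n)}$. I would then divide in $K_0^{\operatorname{loc}}$, which is legitimate because the $[\operatorname{GL}]$ classes are inverted there, and simplify using (3).

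For (5), I would use the morphism $\varphi\mapsto(\ker\varphi,\operatorname{im}\varphi)$ into $\operatorname{Gr}(n-r,n)\times\operatorname{Gr}(r,m)$. The fibre over $(K,I)$ is $\operatorname{Isom}(\mathbb{C}^n/K,I)\cong\operatorname{GL}(r)$. Local triviality comes from the tautological bundles: the fibre is the $\operatorname{Isom}$-bundle between the tautological quotient and the tautological sub, both of which trivialize on the standard charts. Item (1) then gives the formula. The one genuine point to check in this step is that the morphism is algebraic, which follows from the Plücker description of the image and kernel.

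For (6), I would use the tower of relative Grassmannians from Section \ref{flagVarietiesSection}. Each step $X_k\to X_{k+1}$ is a relative Grassmannian of a vector bundle, hence Zariski-locally trivial with fibre $\operatorname{Gr}(\delta_k,\delta_{k+1})$, because it trivializes wherever $\mathcal{S}_{k+1}$ does. Item (2) gives the product. The multinomial identity is then a telescoping of the quotients from (4).

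I expect the main obstacle to be the local-triviality verifications in (3)–(5). These are routine but must be stated carefully: vector bundles and their frame, Grassmann and $\operatorname{Isom}$ bundles are Zariski-locally trivial because $\operatorname{GL}_n$ is special, so that (1) applies.
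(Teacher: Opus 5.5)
Your proposal is correct and follows essentially the same route as the paper: a finite Zariski cover for (1), building frames one column at a time for (3), a principal bundle over the Grassmannian for (4), the identification of $\operatorname{Hom}^r(\mathbb{C}^n,\mathbb{C}^m)$ with the $\operatorname{Isom}$-bundle between the tautological quotient and sub over $\operatorname{Gr}(r,m)\times\operatorname{Gr}(n-r,n)$ for (5), and the tower of relative Grassmannians for (6). The differences are only cosmetic: you stratify into locally closed pieces instead of using inclusion--exclusion, you fibre the frame varieties with fibre $\mathbb{C}^d\setminus\mathbb{C}^{k-1}$ instead of removing $\operatorname{Tot}(f^*\mathcal{S})$ from $X_{k-1}\times\mathbb{C}^d$, and in (4) you use $\operatorname{GL}(d)\to\operatorname{Gr}(n,d)$ with parabolic fibre (giving the first expression directly) where the paper uses the $\operatorname{GL}(n)$-frame bundle of the tautological subbundle (giving the second).
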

	\begin{proof}
		For (1) start with an open cover $Y=\bigcup_{i}U_i$, which we can assume to be finite since we are working with varieties, such that $f^{-1}(U_i)\cong U_i\times F$. Then, we have the inclusion-exclusion relation
		$$
		[X]=\sum_{i}[f^{-1}(U_i)]-\sum_{i_1<i_2}[f^{-1}(U_{i_1}\cap U_{i_2})]+\cdots=[F]\cdot [Y].
		$$
		The second statement follows by induction from the first one. For (3), let 
		$$
		X_k=\{(v_1,v_2,\ldots,v_k)\in \mathbb{C}^{dk}\ | \ v_1,\ldots,v_k \text{ are linearly independent} \}, \ k=1,\ldots,d.
		$$
		This is the open subvariety of $\mathbb{A}^{dk}_{\mathbb{C}}$ given by the complement of the subvariety defined by the equations of all $k\times k $ minors of the matrix $(v_1,\ldots,v_k)$. Notice, in particular, that $X_d=\operatorname{GL}(d)$. Let $f:X_{k-1}\to \operatorname{Gr}(k-1,d)$ be the morphism sending a tuple $(v_1,\ldots,v_{k-1})$ to the $k-1$ dimensional vector subspace it spans.  Over $\operatorname{Gr}(k-1,d)$ we consider the tautological vector bundle $\mathcal{S}$ and we note that $X_k\cong (X_{k-1}\times \mathbb{C}^d\setminus\text{Tot}(f^*\mathcal{S}))$. Thus $[X_k]=[X_{k-1}\times \mathbb{C}^d]-[\text{Tot}(f^*\mathcal{S})]=[X_{k-1}]\cdot\mathbb{L}^d-[X_{k-1}]\cdot \mathbb{L}^{k-1}$. By induction we arrive to the desired formula. 
		\\
		\\
		We now prove the fourth formula of the lemma. Again, let $\mathcal{S}$ be the tautological bundle over $\text{Gr}(n,d)$ and let $\text{Fr}(\mathcal{S})$ be its frame bundle. Recall that this is a principal $\operatorname{GL}(n)$-bundle so $[\operatorname{Fr}(\mathcal{S})]=[\operatorname{GL}(n)][\operatorname{Gr}(n,d)]$. Now, note that $\operatorname{Fr}(\mathcal{S})\cong X_n$ for which we already know the motivic class. This yields (4).
		\\
		\\
		For the fifth formula, it is enough to show that the map
		\begin{equation}\label{mapSendingMatrixToKerAndImage}
		\begin{matrix}
		p: & \operatorname{Hom}^r(\mathbb{C}^n,\mathbb{C}^m) & \longrightarrow & \operatorname{Gr}(r,m)\times \operatorname{Gr}(n-r,n) \\
		&A &\longmapsto & (\operatorname{im}(A),\ker(A))
		\end{matrix}
		\end{equation}
		is a Zariski-locally trivial fibration with typical fiber $\operatorname{GL}(r)$. Firstly we show why the map $p$ is algebraic. Consider the canonical morphism $\Phi:\operatorname{Hom}^r(\mathbb{C}^n,\mathbb{C}^m)\times \mathbb{C}^n\to \operatorname{Hom}^r(\mathbb{C}^n,\mathbb{C}^m)\times \mathbb{C}^m$ such that $\Phi|_{\{A\}}=A$. This is a constant rank morphism of vector bundles so $\operatorname{im}(\Phi)\subseteq\operatorname{Hom}^r(\mathbb{C}^n,\mathbb{C}^m)\times\mathbb{C}^m$ and $\ker(\Phi)\subseteq \operatorname{Hom}^r(\mathbb{C}^n,\mathbb{C}^m)\times \mathbb{C}^n$ define vector subbundles. The universal property of the Grassmannian tells us that these vector subbundles are determined by algebraic morphisms $p_1:\operatorname{Hom}^r(\mathbb{C}^n,\mathbb{C}^m)\to\operatorname{Gr}(r,m)$ and $p_2:\operatorname{Hom}^r(\mathbb{C}^n,\mathbb{C}^m)\to \operatorname{Gr}(n-r,n)$ respectively. The pair $(p_1,p_2)$ is precisely the map $p$. 
		\\
		\\
		Secondly, we show why the typical fiber of the morphism $p$ can be identified with $\operatorname{GL}(r)$. Let $(W,K)\in \operatorname{Gr}(r,m)\times \operatorname{Gr}(n-r,n)$. Choose bases of $\mathbb C^m$ and $\mathbb C^n$ such that the first $r$ basis vectors of $\mathbb C^m$ span $W$, and the last $n-r$ basis
		vectors of $\mathbb C^n$ span $K$. With respect to this choice of coordinates, every matrix of the preimage $p^{-1}(W,K)$ has the form
		\begin{equation}\label{fibresOfMapSendingRankRToItsImageAndKernel}
			\begin{tikzpicture}[baseline=(M.center),scale=0.5,row sep=small,column sep=small]
				\matrix (M) [
				matrix of math nodes,
				left delimiter=(,
				right delimiter=),
				nodes={
					minimum width=1.2em,
					minimum height=1.65em,
					inner sep=0pt,
					anchor=center
				},
				column sep=0pt,
				row sep=0pt
				] {
					\vphantom{0} & \vphantom{0} & \vphantom{0} & \vphantom{0} & \vphantom{0} \\
					\vphantom{0} & \vphantom{0} & \vphantom{0} & \vphantom{0} & \vphantom{0} \\
					\vphantom{0} & \vphantom{0} & \vphantom{0} & \vphantom{0} & \vphantom{0} \\
					\vphantom{0} & \vphantom{0} & \vphantom{0} & \vphantom{0} & \vphantom{0} \\
					\vphantom{0} & \vphantom{0} & \vphantom{0} & \vphantom{0} & \vphantom{0} \\
				};
				
				\draw[line width=.8pt]
				([yshift=0.4em]M-1-3.north east)
				-- ([yshift=-0.4em]M-5-3.south east);
				
				\draw[line width=.8pt]
				([xshift=-0.4em]M-3-1.south west)
				-- ([xshift=0.4em]M-3-5.south east);
				
				\node at ($(M-1-1.north west)!0.5!(M-3-3.south east)$) {\LARGE $B$};
				\node at ($(M-1-4.north west)!0.5!(M-3-5.south east)$) {\LARGE $0$};
				\node at ($(M-4-1.north west)!0.5!(M-5-3.south east)$) {\LARGE $0$};
				\node at ($(M-4-4.north west)!0.5!(M-5-5.south east)$) {\LARGE $0$};
				
				\draw[decorate,decoration={brace,amplitude=4pt,raise=2pt}]
				([yshift=1.0em]M-1-1.north west)
				-- node[midway,above=6pt] {$r$}
				([yshift=1.0em]M-1-3.north east);
				
				\draw[decorate,decoration={brace,amplitude=4pt,raise=2pt}]
				([yshift=1.0em]M-1-4.north west)
				-- node[midway,above=6pt] {$n-r$}
				([yshift=1.0em]M-1-5.north east);
				
				\draw[decorate,decoration={brace,amplitude=4pt,raise=5pt}]
				([xshift=1.1em]M-1-5.north east)
				-- node[midway,right=10pt] {$r$}
				([xshift=1.1em]M-3-5.south east);
				
				\draw[decorate,decoration={brace,amplitude=4pt,raise=5pt}]
				([xshift=1.1em]M-4-5.north east)
				-- node[midway,right=10pt] {$m-r$}
				([xshift=1.1em]M-5-5.south east);
			\end{tikzpicture}
		\end{equation}
		where $B\in \operatorname{GL}(r)$. 
		\\
		\\
		To prove that \(p\) is Zariski-locally trivial, let $B:=\operatorname{Gr}(r,m)\times\operatorname{Gr}(n-r,n)$
		and denote by $\mathcal S,\mathcal{S}'$ the pullbacks to $B$ of the tautological
		bundles on $\operatorname{Gr}(r,m)$ and $\operatorname{Gr}(n-r,n)$ respectively. Set
		$$
		\mathcal Q:=\bigl(B\times\mathbb C^n\bigr)/\mathcal{S}'.
		$$
		Thus, the fibers of $\mathcal S$ and $\mathcal Q$ over a point $(I,K)\in B$ are $I$ and $\mathbb C^n/K$, respectively. We claim that there is an isomorphism
		$$
		\operatorname{Hom}^r(\mathbb C^n,\mathbb C^m)\cong \operatorname{Isom}_B(\mathcal Q,\mathcal S)
		$$
		of varieties over $B$ given by the maps $A\mapsto (\operatorname{im}(A),\ker(A),\overline{A})$ and $\varphi\mapsfrom(I,K,\overline{\varphi})$. Here, $\overline A:\mathbb C^n/\ker(A)\longrightarrow\operatorname{im}(A)$
		is the isomorphism induced by $A$. And, conversely, given $(I,K)\in B$ and an isomorphism
		$\overline{\varphi}:\mathbb C^n/K\to I$, the composition
		$\varphi:\mathbb C^n\overset{p_K}{\twoheadrightarrow}\mathbb C^n/K\xrightarrow{\ \overline{\varphi}\ }I\overset{\iota_I}{\hookrightarrow}\mathbb C^m$ is a rank $r$ map with kernel $K$ and image $I$. These
		constructions are inverse to each other, and under this identification
		the morphism \(p\) corresponds to the natural projection
		$
		\operatorname{Isom}_B(\mathcal Q,\mathcal S)\longrightarrow B$. Over any open subset \(U\subseteq B\) on which both
		\(\mathcal Q\) and \(\mathcal S\) are trivial, we have
		$$
		\operatorname{Isom}_B(\mathcal Q,\mathcal S)|_U
		\cong
		U\times\operatorname{GL}(r),
		$$
		which follows from Lemma \ref{usefulLemmaInvolvingVBAndZariskiLTFibrations}. This shows the desired local triviality. 
		\\
		\\
		It only remains to justify why the maps $A\mapsto (\operatorname{im}(A),\ker(A),\overline{A})$ and $(I,K,\overline{\varphi})\mapsto\varphi=\iota_I\circ \overline{\varphi}\circ p_K$ are algebraic. First notice that the canonical morphism $\Phi:\operatorname{Hom}^r(\mathbb{C}^n,\mathbb{C}^m)\times \mathbb{C}^n\to \operatorname{Hom}^r(\mathbb{C}^n,\mathbb{C}^m)\times \mathbb{C}^m$ factors
		uniquely as
		$$
		\operatorname{Hom}^r(\mathbb{C}^n,\mathbb{C}^m)\times \mathbb{C}^n\
		\twoheadrightarrow
		(\operatorname{Hom}^r(\mathbb{C}^n,\mathbb{C}^m)\times \mathbb{C}^n)/\ker(\Phi)
		\xrightarrow{\ \overline\Phi\ }
		\operatorname{im}(\Phi)
		\hookrightarrow
		\operatorname{Hom}^r(\mathbb{C}^n,\mathbb{C}^m)\times \mathbb{C}^m
		$$
		where $\overline\Phi$ is an algebraic morphism of
		vector bundles. Its fiber over $A\in \operatorname{Hom}^r(\mathbb{C}^n,\mathbb{C}^m)$ is precisely the induced map	$\overline A:\mathbb C^n/\ker(A)\longrightarrow\operatorname{im}(A)$.
		Since $A$ has rank $r$, the map $\overline A$ is an isomorphism for
		every $A\in \operatorname{Hom}^r(\mathbb{C}^n,\mathbb{C}^m)$. Hence $\overline\Phi$ is an algebraic isomorphism of
		vector bundles or, equivalently, a section of the corresponding isomorphism bundle and therefore the assignment $A\mapsto \overline{A}$ is simply the evaluation map of this section. To see the algebraicity of the second map it is enough to show the algebraicity of the maps $I\mapsto \iota_I$ and $K\mapsto p_K$. This is because these correspond to the evaluation maps of the sections corresponding to the morphisms associated to the  pullbacks of the canonical quotient
		\[
		B\times\mathbb C^n\twoheadrightarrow\mathcal Q,
		\]
		and inclusion
		\[
		\mathcal S\hookrightarrow B\times\mathbb C^m.
		\]
		to $\operatorname{Isom}_B(\mathcal{Q},\mathcal{S})$ respectively.
				\\
		\\
		For the last formula, we recall Section \ref{flagVarietiesSection} where we saw that there is a Zariski-locally trivial fibration
		$$
		\operatorname{Fl}_{d_\bullet}(\mathbb{C}^d)=\mathbf{Gr}(d_1,\mathcal{S}_2)\longrightarrow \mathbf{Gr}(\delta_2,\mathcal{S}_3)\longrightarrow \cdots\longrightarrow \mathbf{Gr}(\delta_{s-2},\mathcal{S}_{s-1})\longrightarrow\operatorname{Gr}(\delta_{s-1},\mathbb{C}^d).
		$$
		Therefore, item (2) of this lemma gives the desired formula. 
	\end{proof}
	\begin{remark}
	Notice that all the identities in the preceding lemma hold already in the unlocalized Grothendieck ring $K_0(\operatorname{Var}_{\mathbb C})$. For item (4), this follows from the Schubert-cell decomposition of the Grassmannian; see, for instance, \cite[Section 9.1]{AndersonFultonEquivariantCohomology}. Nevertheless, we retain the proof using frame bundles and localization because it follows directly from the preceding discussion and makes the exposition more self-contained.
	\end{remark}
	
	\subsection{Some useful lemmas involving vector bundles and Zariski-locally trivial fibrations}
	\begin{lemma}\label{usefulLemmaInvolvingVBAndZariskiLTFibrations}
		Let $X$ be a scheme, and let $\mathcal E$ and $\mathcal F$ be vector bundles on $X$. 
		\begin{enumerate}
			\item Let
			$$
			\mathcal{E}_\bullet:=0=\mathcal E_0\subseteq \mathcal E_1\subseteq \cdots \subseteq \mathcal E_s=\mathcal E,
			\qquad
			\mathcal{F}_\bullet:=0=\mathcal F_0\subseteq \mathcal F_1\subseteq \cdots \subseteq \mathcal F_s=\mathcal F
			$$
			be filtrations by vector subbundles. Let
			$\mathcal H om_{\mathcal F\ell}(\mathcal E_\bullet,\mathcal F_\bullet)$ be the subsheaf of $\mathcal H om(\mathcal E,\mathcal F)$ whose sections over an open subset $U\subseteq X$ are the morphisms $\varphi\in \operatorname{Hom}_{\mathscr{O}_U}(\mathcal E|_U,\mathcal F|_U)$
			such that, for every $k$, $\varphi(\mathcal E_k|_U)\subseteq \mathcal F_k|_U$. Then
			$\mathcal H om_{\mathcal F\ell}(\mathcal E_\bullet,\mathcal F_\bullet)$ defines a vector subbundle of
			$\mathcal H om(\mathcal E,\mathcal F)$.
			\item For $0\leq \gamma \leq \min\{\operatorname{rk}(\mathcal{E}),\operatorname{rk}(\mathcal{F})\}$, there is a Zariski-locally trivial fibration, $\mathcal{H}om^\gamma(\mathcal{E},\mathcal{F})\subseteq\mathcal{H}om(\mathcal{E},\mathcal{F})$, whose fiber over $x\in X$ is given by all the rank $\gamma$ linear maps $\mathcal{E}_{\{x\}}\to \mathcal{F}_{\{x\}}$.  
			\item Following the same notation as in the first item,  fix a tuple of integers
			$$
			\Gamma_{\bullet}=(\gamma_{1},\ldots,\gamma_{s}), \  \Gamma_{k}:=\gamma_{1}+\cdots+\gamma_{k}\text{ and } 0\leq \gamma_{k}\leq \min\{\operatorname{rk}(\mathcal{E}_k)-\operatorname{rk}(\mathcal{E}_{k-1}),\operatorname{rk}(\mathcal{F}_k)-\Gamma_{k-1}\}.$$ 
			Then there is a locally closed subscheme $\mathcal{H}om_{\mathcal{F}\ell}^{\Gamma_\bullet}(\mathcal{E}_\bullet,\mathcal{F}_\bullet)\subseteq\mathcal{H}om_{\mathcal{F}\ell}(\mathcal{E}_\bullet,\mathcal{F}_\bullet)$
			whose fiber over a point \(x\in X\) is the variety
			\begin{equation}\label{varietyOfRankProfileFilteredMorphisms}
				X_{\Gamma_\bullet}=\left\{
				\varphi:\mathcal E_{\{x\}}\to \mathcal F_{\{x\}}
				\ \middle|\
				\varphi((\mathcal E_{k})_{\{x\}})\subseteq (\mathcal{F}_{k})_{\{x\}}
				\text{ and }
				\operatorname{rk}(\varphi|_{(\mathcal E_{k})_{\{x\}}})=\Gamma_k
				\text{ for every }k
				\right\}.
			\end{equation}
			Moreover, the morphism
			$\mathcal{H}om_{\mathcal{F}\ell}^{\Gamma_\bullet}(\mathcal{E}_\bullet,\mathcal{F}_\bullet)\to X$ is a Zariski-locally trivial fibration and 
			$$
			\mathcal{H}om_{\mathcal{F}\ell}(\mathcal{E}_\bullet,\mathcal{F}_\bullet)=\bigsqcup_{ \Gamma_\bullet}\mathcal{H}om_{\mathcal{F}\ell}^{\Gamma_\bullet}(\mathcal{E}_\bullet,\mathcal{F}_\bullet)
			$$
			is a stratification by locally closed subsets. 
		\end{enumerate}
		
	\end{lemma}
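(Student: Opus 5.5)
The plan is to prove the three items in order, working locally on an open cover of $X$ where $\mathcal E$, $\mathcal F$ and all the $\mathcal E_k$, $\mathcal F_k$ are simultaneously trivial and adapted, and then gluing.

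\textbf{Item (1).} Since the $\mathcal E_k$ and $\mathcal F_k$ are subbundles (locally direct summands), each point has a neighbourhood $U$ with frames of $\mathcal E|_U$ and $\mathcal F|_U$ adapted to the filtrations. Concretely, the first $\operatorname{rk}\mathcal E_k$ basis sections span $\mathcal E_k|_U$, and similarly for $\mathcal F_\bullet$. In such frames a morphism preserves the filtrations if and only if certain matrix entries vanish; these form a block upper-triangular pattern determined only by the ranks. So $\mathcal H om_{\mathcal F\ell}(\mathcal E_\bullet,\mathcal F_\bullet)|_U$ is the coordinate subbundle spanned by the complementary entries. It is therefore free of constant rank $\sum_k (\operatorname{rk}\mathcal E_k-\operatorname{rk}\mathcal E_{k-1})\operatorname{rk}\mathcal F_k$, and it is a direct summand of $\mathcal Hom(\mathcal E,\mathcal F)|_U$. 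The defining condition is intrinsic, so these local descriptions glue and we obtain a subbundle.

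\textbf{Item (2).} Define $\mathcal Hom^{\gamma}(\mathcal E,\mathcal F)$ inside $\operatorname{Tot}\mathcal Hom(\mathcal E,\mathcal F)$ as the locally closed subscheme where all $(\gamma+1)$-minors vanish and some $\gamma$-minor does not. This is an intrinsic condition, so it glues. Over a trivialising $U$ it becomes $U\times\operatorname{Hom}^\gamma(\mathbb C^{e},\mathbb C^{f})$, which gives the Zariski-local triviality.

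\textbf{Item (3).} This is the main step. Again work over $U$ with adapted frames. There the fibre of $\operatorname{Tot}\mathcal Hom_{\mathcal F\ell}$ is the fixed vector space $H$ of filtered matrices, with standard coordinate flags $V_\bullet\subseteq\mathbb C^e$ and $W_\bullet\subseteq\mathbb C^f$. For each $k$, the rank of the restriction $\varphi|_{V_k}$ is the rank of a fixed submatrix of $\varphi$, namely the first $\operatorname{rk}\mathcal E_k$ columns.

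The locus $\{\operatorname{rk}\varphi|_{V_k}=\Gamma_k \text{ for all } k\}$ is thus an intersection of finitely many locally closed determinantal conditions (minors of size $\Gamma_k+1$ vanish, some minor of size $\Gamma_k$ does not). Hence it is a locally closed subscheme $X_{\Gamma_\bullet}\subseteq H$. Globally we define $\mathcal Hom^{\Gamma_\bullet}_{\mathcal F\ell}$ by the same intrinsic rank conditions on the restrictions to $\mathcal E_k$, and locally it is $U\times X_{\Gamma_\bullet}$. This gives both the locally closed structure and the Zariski-local triviality.

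For the stratification, every filtered $\varphi$ determines the sequence $\Gamma_k=\operatorname{rk}\varphi|_{V_k}$. The increments $\gamma_k=\Gamma_k-\Gamma_{k-1}$ satisfy $0\le\gamma_k\le\operatorname{rk}\mathcal E_k-\operatorname{rk}\mathcal E_{k-1}$, because adding $\dim V_k/V_{k-1}$ vectors raises the rank by at most that much. They also satisfy $\Gamma_k\le\operatorname{rk}\mathcal F_k$, since $\varphi(V_k)\subseteq W_k$. So the strata are disjoint and cover the total space, and there are finitely many of them.

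The point needing most care is that these rank conditions are stable under the transition functions. The transition functions are block upper-triangular automorphisms preserving both flags, and such automorphisms preserve each rank $\operatorname{rk}\varphi|_{V_k}$. This is what makes the local pieces glue into a global subscheme.
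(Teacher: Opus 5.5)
Your proposal is correct and takes essentially the same route as the paper. For (1) both use local splittings of the filtrations adapted to the graded pieces. For (2) both use degeneracy loci cut out by minors. For (3) both intersect the rank strata of the restrictions to the $\mathcal E_k$ and trivialize them over adapted opens. The only cosmetic difference is that the paper defines these loci globally as $D_{\Gamma_k}(\Phi_k)\setminus D_{\Gamma_k-1}(\Phi_k)$ for the tautological morphisms $\Phi_k$, so it does not need your separate check that block upper-triangular transition functions preserve the rank conditions.
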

	
	\begin{proof}
		We start with the first item of the statement. Since the filtrations are by vector subbundles, the quotients
		$$
		\operatorname{gr}_i\mathcal E:=\mathcal E_i/\mathcal E_{i-1},
		\qquad
		\operatorname{gr}_i\mathcal F:=\mathcal F_i/\mathcal F_{i-1}
		$$
		are vector bundles. Let $x\in X$ and $U\subseteq X$ be an open subset containing $x$ over which all the vector bundles
		$\operatorname{gr}_i\mathcal E$ and $\operatorname{gr}_i\mathcal F$ are free, and over which the short exact sequences
		$$
		0\longrightarrow \mathcal E_{i-1}|_U
		\longrightarrow \mathcal E_i|_U
		\longrightarrow \operatorname{gr}_i\mathcal E|_U
		\longrightarrow 0
		$$
		and
		$$
		0\longrightarrow \mathcal F_{i-1}|_U
		\longrightarrow \mathcal F_i|_U
		\longrightarrow \operatorname{gr}_i\mathcal F|_U
		\longrightarrow 0
		$$
		split (see \cite[Proposition A3.1]{EisenbudCommutativeAlgebra}). Thus, we have decompositions
		$$
		\mathcal E|_U\simeq \bigoplus_{i=1}^s \operatorname{gr}_i\mathcal E|_U,
		\qquad
		\mathcal F|_U\simeq \bigoplus_{j=1}^s \operatorname{gr}_j\mathcal F|_U,
		$$
		which are compatible with the filtrations, in the sense that
		$$
		\mathcal E_k|_U\simeq \bigoplus_{i\leq k}\operatorname{gr}_i\mathcal E|_U,
		\qquad
		\mathcal F_k|_U\simeq \bigoplus_{j\leq k}\operatorname{gr}_j\mathcal F|_U.
		$$
		With respect to these decompositions, we have
		$$
		\mathcal H om(\mathcal E|_U,\mathcal F|_U)
		\simeq
		\bigoplus_{i,j}
		\mathcal H om
		\left(
		\operatorname{gr}_i\mathcal E|_U,
		\operatorname{gr}_j\mathcal F|_U
		\right).
		$$
		A morphism $\varphi:\mathcal E|_U\to \mathcal F|_U$ preserves the filtrations if and only if the component
		$\operatorname{gr}_i\mathcal E|_U\to
		\operatorname{gr}_j\mathcal F|_U$
		of $\varphi$ vanishes whenever $j>i$. Hence, under the above decomposition,
		$$
		\mathcal H om_{\mathcal F\ell}(\mathcal E_\bullet,\mathcal F_\bullet)|_U
		\simeq
		\bigoplus_{1\leq j\leq i\leq s}
		\mathcal H om
		\left(
		\operatorname{gr}_i\mathcal E|_U,
		\operatorname{gr}_j\mathcal F|_U
		\right)
		$$
		where the right hand side is locally free.
		In particular, $\mathcal H om_{\mathcal F\ell}(\mathcal E_\bullet,\mathcal F_\bullet)|_U$ is a direct summand of
		$\mathcal H om(\mathcal E|_{U},\mathcal F|_U)$, with complement
		$$
		\bigoplus_{1\leq i<j\leq s}
		\mathcal H om
		\left(
		\operatorname{gr}_i\mathcal E|_U,
		\operatorname{gr}_j\mathcal F|_U
		\right)
		$$
		which is also locally free. 
		Hence
		$\mathcal H om_{\mathcal F\ell}(\mathcal E_\bullet,\mathcal F_\bullet)$ is a vector subbundle of
		$\mathcal H om(\mathcal E,\mathcal F)$.
		\\
		\\
		Now we deal with the second statement. Let $\pi:\mathcal{H}om(\mathcal{E},\mathcal{F})\to X$. There is a tautological morphism 
		\begin{equation}\label{tautologicalHomBetweenTwoVB}
			\Phi:\pi^*\mathcal{E}\to \pi^*\mathcal{F},\quad\Phi|_{\varphi_x}:(\pi^*\mathcal{E})_{\{\varphi_x\}}=(\mathcal{E})_{\{x\}}\to (\mathcal{F})_{\{x\}}=(\pi^*\mathcal{F})_{\{\varphi_x\}}\equiv \varphi_x
		\end{equation}
		for all $x\in X$. The $k$-th degeneracy locus is 
		\begin{equation}\label{DegeneracyLocus}
			D_k(\Phi)=\{\varphi_x\in \mathcal{H}om(\mathcal{E},\mathcal{F})\ | \ \operatorname{rk}(\varphi_x)\leq k\}.
		\end{equation}
		This is a closed subscheme of $\mathcal{H}om(\mathcal{E},\mathcal{F})$ since it can be regarded as the zero subscheme of the section $\wedge^{k+1}\Phi$ \cite[Chapter 14]{FultonIntersectionTheory}. We then define
		$$
		\mathcal{H}om^\gamma(\mathcal{E},\mathcal{F}):= D_{\gamma}(\Phi)\setminus D_{\gamma-1}(\Phi)
		$$
		which is a locally closed subscheme of $\mathcal{H}om(\mathcal{E},\mathcal{F})$ whose fiber over $x$ corresponds to all rank $\gamma$ linear maps $\mathcal{E}|_{\{x\}}\to \mathcal{F}|_{\{x\}}$. The only thing that remains to be checked is that $\mathcal{H}om^\gamma(\mathcal{E},\mathcal{F})$ is Zariski-locally trivial. Let \(U\subset X\) be an open subset trivializing both \(\mathcal E\) and
		\(\mathcal F\), say $\mathcal E|_U\simeq \mathcal O_U^m,\ \mathcal F|_U\simeq \mathcal O_U^n$.
		Then
		$ \mathcal H om(\mathcal E,\mathcal F)|_U \simeq U\times \operatorname{Hom}(\mathbb C^m,\mathbb C^n)$. Under this identification the tautological morphism $\Phi:\pi^*\mathcal E\to \pi^*\mathcal F$
		is represented by the $n\times m$ matrix $A=(a_{ij})_{i,j}$ on
		\(\operatorname{Hom}(\mathbb C^m,\mathbb C^n)\). Hence the degeneracy locus
		\(D_\gamma(\Phi)\) is, over \(U\), the product of \(U\) with the usual
		determinantal variety of linear maps of rank at most $\gamma$, that is, 
		$
		D_\gamma(\Phi)|_{\pi^{-1}(U)}
		\simeq
		U\times \operatorname{Hom}^{\le \gamma}(\mathbb C^m,\mathbb C^n).
		$
		Consequently,
		$$
		\mathcal H om^\gamma(\mathcal E,\mathcal F)|_{\pi^{-1}(U)}
		=
		D_\gamma(\Phi)|_{\pi^{-1}(U)}
		\setminus
		D_{\gamma-1}(\Phi)|_{\pi^{-1}(U)}
		\simeq
		U\times \operatorname{Hom}^{\gamma}(\mathbb C^m,\mathbb C^n).
		$$
		where $\text{Hom}^\gamma(\mathbb{C}^m,\mathbb{C}^n)$ is the variety of rank $\gamma$ linear maps. Thus \(\mathcal H om^\gamma(\mathcal E,\mathcal F)\to X\) is a
		Zariski-locally trivial fibration with fiber
		\(\operatorname{Hom}^{\gamma}(\mathbb C^m,\mathbb C^n)\).
		\\
		\\
		Finally, we prove the third statement. We denote by $\iota:\mathcal{H}om_{\mathcal{F}\ell}(\mathcal{E}_\bullet,\mathcal{F}_\bullet)\hookrightarrow\mathcal{H}om(\mathcal{E},\mathcal{F})$ the inclusion morphism and denote abusively
		\[
		\Phi:(\pi\circ \iota)^*\mathcal E\longrightarrow (\pi\circ \iota)^*\mathcal F
		\]
		the pullback of the tautological morphism in Equation (\ref{tautologicalHomBetweenTwoVB}).
		Since the points of $\mathcal{H}om_{\mathcal{F}\ell}(\mathcal{E}_\bullet,\mathcal{F}_\bullet)$ correspond to
		filtration-preserving morphisms, the tautological morphism satisfies
		$\Phi((\pi\circ \iota)^*\mathcal E_k)\subseteq (\pi\circ\iota)^*\mathcal F_k$
		for every $k$. Therefore, it induces a morphism $\Phi_k:(\pi\circ \iota)^*\mathcal E_k\longrightarrow (\pi\circ \iota)^*\mathcal F_k	$. Define
		$$
		\mathcal{H}om_{\mathcal{F}\ell}^{\Gamma_\bullet}(\mathcal{E}_\bullet,\mathcal{F}_\bullet)
		:=
		\bigcap_{k=1}^s
		\left(
		D_{\Gamma_k}(\Phi_k)\setminus D_{\Gamma_k-1}(\Phi_k)
		\right),
		$$
		where $D_{\gamma}(\Phi)$ is as in Equation (\ref{DegeneracyLocus}). Since each
		$D_{\Gamma_k}(\Phi_k)$ is closed, $\mathcal{H}om_{\mathcal{F}\ell}^{ \Gamma_\bullet}(\mathcal{E}_\bullet,\mathcal{F}_\bullet)$ is a locally closed subscheme of $\mathcal{H}om_{\mathcal{F}\ell}(\mathcal{E}_\bullet,\mathcal{F}_\bullet)$. By construction, its fiber over
		$x\in X$ consists exactly of the filtration-preserving maps $\varphi:\mathcal E_{\{x\}}\to \mathcal F_{\{x\}}$
		such that $\operatorname{rk}(\varphi|_{(\mathcal{E}_{k})_{\{x\}}})=\Gamma_k$ for every $k$. It remains to show the local triviality of the map $\mathcal{H}om^{\Gamma_\bullet}_{\mathcal{F}\ell}(\mathcal{E}_\bullet,\mathcal{F}_\bullet)\to X$. Fix \(x\in X\). By the construction in the proof of part~(1), there
		exists an open neighbourhood \(U\subseteq X\) of \(x\) and compatible
		trivializations of filtered vector bundles
		\[
		\mathcal E_\bullet|_U
		\simeq
		U\times(\mathcal E_\bullet)_{\{x\}},
		\qquad
		\mathcal F_\bullet|_U
		\simeq
		U\times(\mathcal F_\bullet)_{\{x\}}.
		\]
		These trivializations induce an isomorphism
		\[
		\mathcal H om_{\mathcal F\ell}
		(\mathcal E_\bullet,\mathcal F_\bullet)|_U
		\simeq
		U\times
		\operatorname{Hom}_{\mathcal F\ell}
		\left(
		(\mathcal E_\bullet)_{\{x\}},
		(\mathcal F_\bullet)_{\{x\}}
		\right).
		\]
		Under this identification, the equations defining
		\(D_{\Gamma_k}(\Phi_k)\) are the minors imposing the corresponding rank
		condition on the second factor. Therefore,
		\[
		\mathcal H om_{\mathcal F\ell}^{\Gamma_\bullet}
		(\mathcal E_\bullet,\mathcal F_\bullet)|_U
		\simeq
		U\times X_{\Gamma_\bullet},
		\]
		where \(X_{\Gamma_\bullet}\) is the variety defined in
		Equation~\eqref{varietyOfRankProfileFilteredMorphisms}.
	\end{proof}
	\begin{lemma}\label{fiberProductOfZariskiLocallyTrivialFibrationsIsZariskiLocallyTrivial}
		Let $S$ be a scheme and $p_X:X\to S$ and $p_Y:Y\to S$ be Zariski-locally trivial fibrations with typical fibers $F_X$ and $F_Y$ respectively. Then, the fibered product $X\times_SY\to S$ is a Zariski-locally trivial fibration with typical fiber $F_X\times F_Y$. 
	\end{lemma}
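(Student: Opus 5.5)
The plan is to check local triviality directly by intersecting trivializing covers, and then to compute the fibre of the fibred product over each open set. For each point $x\in S$ we will choose an open neighbourhood $U_X\ni x$ with $p_X^{-1}(U_X)\cong U_X\times F_X$ over $U_X$. We will similarly choose $U_Y\ni x$ with $p_Y^{-1}(U_Y)\cong U_Y\times F_Y$ over $U_Y$. Setting $U:=U_X\cap U_Y$, which is open and contains $x$, the trivializations restrict to isomorphisms $p_X^{-1}(U)\cong U\times F_X$ and $p_Y^{-1}(U)\cong U\times F_Y$ over $U$. This is simply base change of the trivializations along the open immersion $U\hookrightarrow U_X$, respectively $U\hookrightarrow U_Y$.

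Next we will identify the preimage of $U$ in the fibred product. Write $q:X\times_SY\to S$ for the structure map. Because fibred products commute with base change along open immersions, we have $q^{-1}(U)\cong p_X^{-1}(U)\times_U p_Y^{-1}(U)$. The isomorphisms above are over $U$, so they induce an isomorphism over $U$
$$
q^{-1}(U)\cong (U\times F_X)\times_U(U\times F_Y).
$$
The right-hand side is canonically $U\times F_X\times F_Y$, using the standard identification $(U\times A)\times_U(U\times B)\cong U\times(A\times B)$. This identification follows from the universal property: a pair of $U$-morphisms $T\to U\times A$ and $T\to U\times B$ is the same thing as a map $T\to U$ together with maps $T\to A$ and $T\to B$. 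It is compatible with the projection to $U$.

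The points $x$ range over all of $S$, so the sets $U$ form an open cover on which $q$ is trivial with fibre $F_X\times F_Y$, and this proves the claim. There is no real obstacle in this argument. The only point needing care is that the isomorphism must be over $U$ rather than merely an abstract isomorphism, and this is automatic because both trivializations commute with the projections. For later use in $K_0(\operatorname{Var}_{\mathbb C})$, we will also note that when $S$ is a variety the cover can be taken finite, as in Lemma \ref{computationsMotivicClasses}.
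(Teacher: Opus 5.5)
Your proof is correct and follows essentially the same route as the paper. Both arguments choose open sets of $S$ trivializing both fibrations, note that $p_X^{-1}(U)\times_U p_Y^{-1}(U)$ is the preimage of $U$ in $X\times_S Y$, and identify it with $U\times(F_X\times F_Y)$ via $(U\times F_X)\times_U(U\times F_Y)\cong U\times(F_X\times F_Y)$; your explicit choice of the common neighbourhood $U=U_X\cap U_Y$ makes precise a step the paper takes for granted.
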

	\begin{proof}
		Let $\{U_i\}_{i\in I}$ be an open covering of $S$ that trivializes both $X$ and $Y$. Then $\{p_X^{-1}(U_i)\times_{U_i}p_Y^{-1}(U_i)\}_{i\in I}$ is an open cover of the fibered product $X\times_S Y$ \cite[Corollary 4.19]{GortzWedhornAG}.  But, for all $i\in I$, 
		$$
		p_X^{-1}(U_i)\times_{U_i}p_Y^{-1}(U_i)\simeq (U_i\times F_X)\times_{U_i}(U_i\times F_Y)\simeq U_i\times (F_X\times F_Y).
		$$
		The last isomorphism follows from standard properties of fibered products (see, for instance, \cite[Chapter 3, Proposition 1.4]{LiuAG}).
	\end{proof}
	\subsection{Motivic Hall algebras}\label{motivicHallAlgebrasSection}
	For this section we closely follow Bridgeland \cite{BridgelandIntroMotivicHallAlgebras}. Fix a quiver $Q$, a set of relations $\mathcal{R}$ and a dimension vector $\mathbf{d}\in \mathbb{N}^{|Q_0|}$. The \emph{stack of $\mathbf{d}$-dimensional representations with relations} is the quotient stack 
	$$
	\mathscr{M}_{\mathbf{d}}=[\text{Rep}(Q,\mathbf{d},\mathcal{R})/\operatorname{GL}(\mathbf{d})]
	$$
	and, for a choice of stability parameter $\theta\in \mathbb{Z}^{|Q_0|}$, the quotient stacks 
	$$
	\mathscr{M}^{\theta\text{-}(s)s}_{\mathbf{d}}=[\text{Rep}^{\theta\text{-}(s)s}(Q,\mathbf{d},\mathcal{R})/\operatorname{GL}(\mathbf{d})]
	$$
	are open substacks of $\mathscr{M}_\mathbf{d}$. 
	The so-called \emph{moduli stack of finite-dimensional representations of $Q$ with relations} is the stack 
	$$
	\mathscr{M}=\bigsqcup_{\mathbf{d}\in\mathbb{N}^{|Q_0|}}\mathscr{M}_\mathbf{d}.
	$$
	Denote $\operatorname{St}/\mathscr{M}$ the 2-category of finite type stacks over $\mathscr{M}$. The \emph{relative Grothendieck group}, $K_0(\operatorname{St}/\mathscr{M})$, is the free abelian group spanned by isomorphism classes of objects in $\operatorname{St}/\mathscr{M}$ modulo some relations which generalize those described for the Grothendieck ring of varieties described in Section \ref{GrothendieckRingOfVarieties}. We refer the reader to Bridgeland \cite[Section 3.4]{BridgelandIntroMotivicHallAlgebras} for a detailed exposition of these. 
	\\
	\\
	Let $\mathscr{M}^{(2)}$ the stack of short exact sequences, that is, the stack whose $S$-points are short exact sequences
	\begin{equation}\label{objectsStackExactSequences}
		\begin{tikzcd}[column sep =normal, row sep =normal]
			0\arrow[r]& \mathcal{V}_1\arrow[r]&\mathcal{V}\arrow[r]&\mathcal{V}_2\arrow[r]&0
		\end{tikzcd}
	\end{equation}
	of families of representations of $Q$ with relations over $S$. There are morphisms $s,m,q:\mathscr{M}^{(2)}\to \mathscr{M}$ remembering $\mathcal{V}_1$, $\mathcal{V}$ and $\mathcal{V}_2$ respectively. 
	\\
	\\
	The \emph{motivic Hall algebra}, $\mathcal{H}(Q,\mathcal{R})$, of the category of finite-dimensional representations of $Q$ with relations is given by $K_0(\operatorname{St/\mathscr{M}})$ together with the \emph{convolution product}:
	$$
	[\mathfrak{X}_1\longrightarrow \mathscr{M}]\textasteriskcentered[\mathfrak{X}_2\longrightarrow \mathscr{M}]=[\mathfrak{Z}\overset{m \circ h}{\longrightarrow}\mathscr{M}]
	$$
	where $\mathfrak{Z}$ and $h$ are defined by the following cartesian diagram 
	\begin{center}
		\begin{tikzcd}[column sep =normal, row sep =normal]
			\mathfrak{Z}\arrow[r,"h"]\arrow[d]&\mathscr{M}^{(2)}\arrow[r,"m"]\arrow[d,"(s{,}q)"]&\mathscr{M} \\ 
			\mathfrak{X}_1\times\mathfrak{X}_2\arrow[r]&\mathscr{M}\times\mathscr{M}.&
		\end{tikzcd}
	\end{center}
	For instance, for $\delta_{\mathbf{d}_k}=[\mathscr{M}_{\mathbf{d_k}}\to \mathscr{M}]$, the product $\delta_{\mathbf{d}_1}\textasteriskcentered\delta_{\mathbf{d}_2}$ is the stack of exact sequences, or equivalently two-step filtrations, as in Equation (\ref{objectsStackExactSequences}) where $\mathcal{V}_k$ is a family of $\mathbf{d}_k$-dimensional representations and $m\circ h$ is mapping such an exact sequence to its middle term $\mathcal{V}$. With the product just defined, $\mathcal{H}(Q,\mathcal{R})$ is a unital associative algebra over $K_0(\operatorname{St}/\mathbb{C})$ \cite[Theorem 4.3]{BridgelandIntroMotivicHallAlgebras}. 
	\\
	\\
	Given a decomposition $\mathbf{d}=\mathbf{d}_1+\ldots+\mathbf{d}_s$, Bridgeland \cite[Lemma 4.4]{BridgelandIntroMotivicHallAlgebras} shows that the product $\delta_{\mathbf{d}_1}\textasteriskcentered\ldots\textasteriskcentered\delta_{\mathbf{d}_s}$ is induced by the diagram 
	\begin{center}
		\begin{tikzcd}[column sep =normal, row sep =normal]
			\mathscr{M}^{(s)}\arrow[r]\arrow[d]&\mathscr{M} \\ 
			\mathscr{M}^s.&
		\end{tikzcd}
	\end{center}
	Here $\mathscr{M}^{(s)}$ is the stack whose $S$-points are flags
	$$
	0\subseteq \mathcal{V}_1\subseteq\ldots\subseteq\mathcal{V}_{s-1}\subseteq\mathcal{V}
	$$
	where $\mathcal{V}_k$ is a family of finite-dimensional representations of $Q$ with relations parameterised by $S$. The horizontal morphism sends such a flag to its total object
	$\mathcal V$, whereas the vertical morphism sends it to the tuple of
	successive quotients
	$$
	\left(\mathcal V_1/\mathcal V_0,\,\mathcal V_2/\mathcal V_1,\,\ldots,\,\mathcal V_s/\mathcal V_{s-1}\right).
	$$
	Restricting the vertical morphism to $\mathscr M_{\mathbf d_1}\times\cdots\times\mathscr M_{\mathbf d_s}$
	therefore gives the stack of flags for which
	$\dim\left(\mathcal V_j/\mathcal V_{j-1}\right)=\mathbf d_j$, $j=1,\ldots,s$.
	Equivalently,
	$\dim(\mathcal V_k)=\mathbf d_1+\cdots+\mathbf d_k$ for $k=1,\ldots,s$,
	and consequently,
	\[
	\delta_{\mathbf d_1}\ast\cdots\ast\delta_{\mathbf d_s}
	=
	\left[
	\left[
	\operatorname{Frep}_{\mathbf d_\bullet}(Q,\mathcal R)
	/
	\operatorname{GL}(\mathbf d)
	\right]
	\longrightarrow
	\mathscr M
	\right],
	\]
	where the morphism to $\mathscr M$ forgets the flag and retains the
	underlying representation.
	\\
	\\
	Since every $\mathbf{d}$-dimensional representation admits an unique Harder-Narasimhan filtration \cite[Lemma 4.7]{ReinekeSurvey}, we have that 
	$$
	\delta_{\mathbf{d}} =\sum_{\substack{\mathbf{d}=\mathbf{e}_1+\ldots+\mathbf{e}_r \\ \mu_{\theta}(\mathbf{e}_1)>\cdots> \mu_{\theta}(\mathbf{e}_r)}}\delta_{\mathbf{e}_1}^{\theta\text{-}ss}\textasteriskcentered\cdots\textasteriskcentered \delta_{\mathbf{e}_r}^{\theta\text{-}ss} 
	$$
	Therefore, 
	$$
	\delta^{\theta\text{-}s}_{\mathbf{d}}=\delta_{\mathbf{d}}-\sum_{\substack{r\geq 2,\mathbf{d}=\mathbf{e}_1+\ldots+\mathbf{e}_r \\ \mu_{\theta}(\mathbf{e}_1)>\cdots> \mu_{\theta}(\mathbf{e}_r)}}\delta_{\mathbf{e}_1}^{\theta\text{-}ss}\textasteriskcentered\cdots\textasteriskcentered \delta_{\mathbf{e}_r}^{\theta\text{-}ss}.
	$$
	For each $\mathbf{e}_k\leq \mathbf{d}$, we can apply the same formula so a finite recursion gives the following identity in the motivic Hall algebra:
	\begin{equation}\label{identityInMotivicHallAlgebra}
		\delta_{\mathbf{d}}^{\theta\text{-}s}=\sum_{*}(-1)^{s-1}\delta_{\mathbf{d_1}}\textasteriskcentered\cdots\textasteriskcentered\delta_{\mathbf{d}_s}
	\end{equation}
	where the sum runs over all decompositions $\mathbf{d}_1+\cdots+\mathbf{d}_s=\mathbf{d}$ of $\mathbf{d}$ into non-zero dimension vectors such that $\mu_{\theta}(\sum_{j=1}^k\mathbf{d}_j)>\mu_{\theta}(\mathbf{d})$ for all $k=1,\ldots,s-1$.
	Applying the forgetful map $K_0(\operatorname{St}/\mathscr{M})\to K_0(\operatorname{St}/\mathbb{C})$ to the identity in Equation (\ref{identityInMotivicHallAlgebra}) gives that 
	$$
	[\mathscr{M}^{\theta\text{-}s}_{\mathbf{d}}]=\sum_{*}(-1)^{s-1}\left[[\operatorname{Frep}_{\mathbf{d}_\bullet}(Q,\mathcal{R})/\operatorname{GL}(\mathbf{d})]\right].
	$$
	Recall from Remark \ref{diagonalSubgroupsActingTrivially} that the diagonal subgroup $\mathbb{C}^*\hookrightarrow\operatorname{GL}(\mathbf{d})$ acts trivially on $\operatorname{Rep}(Q,\mathbf{d})$ and hence on $\operatorname{Rep}(Q,\mathbf{\mathbf{d}},\mathcal{R})$, then the morphism of stacks
	\begin{equation}\label{morphismQuotientStackToCoarseModuliSpace}
			\mathscr{M}_{\mathbf{d}}^{\theta\text{-}s}\to [\mathcal{M}^{\theta}(Q,\mathbf{d},\mathcal{R})]
	\end{equation}
	is a $\mathbb{G}_m$-gerbe \cite[p. 1288]{HoskinsSchaffhauserRationalPoints}.  Isomorphism classes of $\mathbb{G}_m$-gerbes are parameterised by the cohomology group $H^2_{\text{ét}}([\mathcal{M}^{\theta}(Q,\mathbf{d},\mathcal{R})],\mathbb{G}_m)$ \cite[Proposition 4.1]{HoskinsSchaffhauserRationalPoints} and the class of the gerbe in Equation (\ref{morphismQuotientStackToCoarseModuliSpace}) measures the obstruction to the existence of a universal family on $\mathcal{M}^{\theta}(Q,\mathbf{d},\mathcal{R})$. Since we assumed $\theta$ generic and $\mathbf{d}$ indivisible, such an universal family exists (see Proposition \ref{existenceUniversalFamilyNakajimaQuiverVarieties} and Theorem \ref{quiverChainsClosedImmersion}) which means that the class of this gerbe is trivial, or equivalently, that the gerbe is \emph{neutral}. Thus
	$$
	\mathscr{M}^{\theta\text{-}s}_\mathbf{d}\simeq [\mathcal{M}^{\theta}(Q,\mathbf{d},\mathcal{R})]\times B\mathbb{G}_m
	$$
	and in $K_0(\operatorname{St}/\mathbb{C})$
	$$
	[\mathscr{M}^{\theta\text{-}s}_{\mathbf{d}}]=\frac{[\mathcal{M}^\theta(Q,\mathbf{d},\mathcal{R})]}{\mathbb{L}-1}.
	$$
	Now we appeal to the fact that $K_0(\operatorname{St}/\mathbb{C})\simeq K_0^{\text{loc}}(\text{Var}_\mathbb{C})$ \cite[Lemma 3.9]{BridgelandIntroMotivicHallAlgebras} to obtain the desired formula in Lemma \ref{MotivicFeiIdentity}.
	\nocite{*}
	\bibliographystyle{alpha}
	\bibliography{biblio}

\end{document}